              \def\version{15 February 2019}	        	%
\numberwithin{equation}{section}
\newcommand{\ProofEnde}{\hfill {$\square$}}
\newcommand{\ProofTeilEnde}{\hfill {$\triangle$}}
\newcommand{\ExampleEnde}{\hfill {$\diamond$}}
\def\emptyset{\varnothing} 
\def\a{\alpha}
\def\d{{\rm d}} 
\def\e{\varepsilon} 
\font\tenBbb=msbm10 
\font\sevenBbb=msbm7 
\font\fiveBbb=msbm5 
\def\mmax{m_{\max}}
\newcommand{\m}     {\mathfrak{m}} 
\newcommand{\R}     {\mathbb{R}} 
\newcommand{\N}     {\mathbb{N}}
\newcommand{\smfrac}[2]{\textstyle{\frac {#1}{#2}}}
\def\1{{\mathchoice {1\mskip-4mu\mathrm l}      
{1\mskip-4mu\mathrm l} 
{1\mskip-4.5mu\mathrm l} {1\mskip-5mu\mathrm l}}} 
\def\comment#1{} 
\newtheoremstyle{thm}{2ex}{2ex}{\itshape\rmfamily}{} 
{\bfseries\rmfamily}{}{1.7ex}{} 
\newtheoremstyle{rem}{1.3ex}{1.3ex}{\rmfamily}{} 
{\itshape\rmfamily}{}{1.5ex}{}
\renewcommand{\theequation}{\thesection.\arabic{equation}} 
\newtheorem{theorem}{Theorem}[section] 
\newtheorem{lemma}[theorem]{Lemma} 
\newtheorem{prop}[theorem] {Proposition} 
\newtheorem{cor}[theorem]  {Corollary}
\newtheorem{claim}[theorem] {Claim}
\theoremstyle{definition}
\newtheorem{example}[theorem] {Example}
\newtheorem{remark}[theorem]  {Remark}
\renewcommand{\d}{{\rm d}} 
\newcommand{\eps}{\varepsilon} 
\newcommand{\Leb}{{\rm Leb}}
\newcommand{\dist}{{\operatorname {dist}}} 
\newcommand{\diam}{{\operatorname {diam}}}
\newcommand{\SIR}{\mathrm{SIR}}
\def\ellmin{\ell_{\min}}
\def\ellmax{\ell_{\max}}
\def\kmax{k_{\max}}
\newcommand{\Mcal}   {{\mathcal M }} 
\newcommand{\Ocal}   {{\mathcal O }}
\newcommand{\Scal}   {{\mathcal S }}
\newcommand\numberthis{\addtocounter{equation}{1}\tag{\theequation}}
\renewcommand{\e}   {{\operatorname e }}
\definecolor{Red}{rgb}{1,0,0}
\begin{document} 
 
\title[Routeing properties in a Gibbsian model]{Routeing properties in a Gibbsian model \\\medskip for highly dense multihop networks}
\author[Wolfgang König and András Tóbiás]{}
\maketitle
\thispagestyle{empty}
\vspace{-0.5cm}

\centerline{{\sc Wolfgang K\"onig\footnote{TU Berlin, Straße des 17. Juni 136, 10623 Berlin, and WIAS Berlin, Mohrenstra{\ss}e 39, 10117 Berlin, {\tt koenig@wias-berlin.de}}} and {\sc András Tóbiás\footnote{Berlin Mathematical School, TU Berlin, Straße des 17. Juni 136, 10623 Berlin, {\tt tobias@math.tu-berlin.de}}}}
\renewcommand{\thefootnote}{}
\vspace{0.5cm}
\centerline{\textit{WIAS Berlin and TU Berlin, and TU Berlin}}

\bigskip

\centerline{\small(\version)} 
\vspace{.5cm} 
 
\begin{quote} 
{\small {\bf Abstract:}} We investigate a probabilistic model for routeing in a multihop ad-hoc communication network, where each user sends a message to the base station. Messages travel in hops via other users, used as relays. Their trajectories are chosen at random according to a Gibbs distribution, which favours trajectories with low interference, measured in terms of signal-to-interference ratio. This model was introduced in our earlier paper \cite{KT17}, where we expressed, in the limit of a high density of users, the typical distribution of the family of trajectories in terms of a law of large numbers. 

In the present work, we derive its qualitative properties. We analytically identify the emerging typical scenarios in three extreme regimes. We analyse the typical number of hops and the 
typical length of a hop, and the deviation of the trajectory from the straight line, \eqref{regime1} in the limit of a large communication area and large distances, and~\eqref{regime2} in the limit of a strong interference weight. In both regimes, the typical trajectory approaches a straight line quickly, in regime~\eqref{regime1} with equal hop lengths. Interestingly, in regime~\eqref{regime1}, the typical length of a hop diverges logarithmically in the distance of the transmitter to the base station. We further analyse~\eqref{regime3} local and global repulsive effects of a densely populated subarea on the trajectories. 

\end{quote}


\bigskip\noindent 
{\it MSC 2010.} 60G55; 60K30; 65K10; 82B21; 90B15; 90B18; 91A06

\medskip\noindent
{\it Keywords and phrases.}  Multihop ad-hoc network, signal-to-interference ratio, Gibbs distribution, message routeing, high-density limit, point processes, variational analysis, expected number of hops, selfish routeing

\setcounter{tocdepth}{3}


\setcounter{section}{0}

\section{Introduction} \label{sec-Intro}
\noindent In this work, we continue our research \cite{KT17} on a spatial Gibbsian model for random message routeing in a multihop ad-hoc network with device-to-device (D2D) communication. In \cite{KT17} we prepared for an analysis of the qualitative properties of the model by deriving simplifying formulas that describe the situation in a densely populated area in the sense of a law of large numbers. In the present work, we carry out this analysis and describe a number of characteristic properties of the message trajectories. In particular, we are interested in the interplay between probabilistic properties like {\em entropy} and energetic properties like {\em interference} and {\em congestion} and how this interplay influences geometric characteristics like number and lengths of the hops or shapes of the trajectories. Our goal is to identify some rules of thumbs in the relationships between all these quantities in asymptotic regimes in which they become particularly pronounced, like large areas and long trajectories, strong 
influence of interference, or local 
regions with a 
particularly high population. While our previous paper used mainly probabilistic methods, the present paper entirely employs analytic tools.

\subsection{The main features of the model}\label{sec-mainfeatures}

Let us introduce our telecommunication model. The communication area $W$ is a bounded set in $\R^d$, and it has a unique base station at the origin $o$. Many users are distributed in $W$ according to some measure. Each user sends out a message to the base station along a random multihop trajectory that uses other users as relays and has at most $\kmax$ hops. (There is no mobility of the users (nodes) in our model.) We are interested in the joint distribution of all these random message trajectories, conditional on the locations of the users. 

Our main idea is to study a trajectory distribution that favours configurations with a high service quality from the viewpoint of interference. Under the distribution, all the message trajectories are stochastically independent. Each individual trajectory is distributed in the following way. 
{\em A priori}, it has a uniform distribution (i.e., it chooses first a hop number $k\in\{1,\dots,\kmax\}$ and then a $k$-hop trajectory, both uniformly at random), and there is an exponential weight term penalizing {\em interference}. This term is the sum of the reciprocals of the {\em signal-to-interference ratios (SIRs)} for each hop of the message trajectory. (In Section~\ref{sec-SIRpenaltydiscussion} we will point out that the penalty term is a certain approximation of the {\em bandwidth} used for the multihop transmission.) That is, the total penalty is given to the entire trajectory collection in terms of a probability weight. In the language of statistical mechanics, such a probability measure for collections of trajectories is called a {\em Gibbs distribution}. The highest probability is attached to those trajectory families that realize the best compromise between entropy (i.e., probability) and energy (i.e., interference); i.e., the Gibbs distribution respects the transmission properties of the entire system. Note that there is no strict SIR threshold, i.e., hops with bad SIR values are suppressed but not forbidden, similarly to the setting of \cite{FDTsT07}. In Section~\ref{sec-SIRthreshold}, we comment on a version of the model where hops with low SIR values are excluded.

This model is of snap-shot type without time dependence. Indeed, we assume that all messages are transmitted, relayed, and received at the same time. Further, all users act as transmitters but also as relays and can receive and forward messages, while the base station has only the function of a receiver. According to this, we define SIR in such a way that the interference for any hop of any message is determined by the spatial positions of all users (i.e., the starting points of all message trajectories), analogously to the model considered in \cite{HJKP15}. The independence of message trajectories under our Gibbsian trajectory distribution is a consequence of this choice, and it would not hold in time-dependent versions of the model. We comment on possible ways of introducing a time dimension in the model and their effect on the notion of SIR in Section~\ref{sec-timedependence}.


Summarizing, we consider an ad-hoc network with D2D communication in a bounded communication area with a large number of users and a single base station. Nevertheless, we note that we are not aware of a real-world telecommunication network that works according to the routeing policy that we use in this paper. One of our motivations is to explore the physical effect of the penalization of the joint probability of the random paths, which are {\em a priori} randomly picked with equal probability: Does the (soft) requirement of a good transmission quality force the trajectories to choose geometrically the shortest route? What hop lengths do they choose? We would like to understand the interplay between entropy and interference-energy and emerging effects.

The idea of an optimal trade-off between entropy and energy is most clearly realized in a certain limiting sense in \cite[Theorem 1.4]{KT17}, which will be the starting point of the present paper and will be summarized in Section~\ref{sec-beta=0}. There, we carried out the limit of a high density of users, and we derived a kind of law of large numbers for the ``typical'' trajectory distribution, i.e., the joint trajectory distribution that has the highest probability under the Gibbs measure. 
The optimal trajectory collection was obtained as the minimizer of a characteristic {\em variational formula}. Roughly speaking, the variational formula is of the form ``minimize the sum of entropy and energy among all admissible trajectory families'', see Section~\ref{sec-varformula}.

In fact, in \cite{KT17} we considered an extended version of the above model with another exponential weight term penalizing {\em congestion}. This term counts the ordered pairs of incoming hops arriving at the relays in the system. This is certainly an important characteristics of the quality of service, as too high an accumulation of many messages at relays results in a delay.  An important property of this term is that it introduces dependence between the trajectories of different messages, unlike the interference term. Hence, this model represents a situation with a centralized choice of all trajectories in the spirit of a common welfare, instead of selfish routeing optimization. In Section~\ref{sec-gametheory}, we give a game-theoretic discussion of the two weight terms in the exponent in the light of traffic theory; more precisely we ask under what circumstances the optimization of the sum of these two terms can be called selfish or non-selfish. In Section~\ref{sec-MCMC}, we also make a connection between
this optimization and our model from the viewpoint of stochastic algorithms. According to the results of \cite{M18}, realizing our Gibbsian system numerically using Monte Carlo Markov chain methods is on average much more effective than finding the optimum. This gives another motivation for our model. 

With the above definition of message trajectories, we only consider \emph{uplink communication}, i.e., users transmitting messages to the base station. The \emph{downlink}, i.e., the reversed direction, works very similarly. We believe that all the results of \cite{KT17} as well as the ones of the present paper have an analogue for the downlink with an analogous proof, and we refrain from presenting details. 

\subsection{Goals}\label{sec-goals}

Our goal in the present paper is to understand the global effects that are induced in the Gibbsian system exclusively by entropy and energy into geometric properties of the trajectory collection. 
As our model depends on various parameters (size and form of the communication area, density of users, choice of the interference term, strength of interference weighting, etc.), this can be done rigorously only in certain {\em limiting} regimes, namely:
\begin{enumerate}[(1)]
\item\label{regime1} large communication area and long distances (and large hop numbers), 
\item\label{regime2} strong interference penalization, and 
\item\label{regime3} high local density of users on a subset of the communication area.
\end{enumerate} 
 
We are interested in geometric properties such as the typical hop lengths, the average number of hops, and the typical shape of the trajectory.
In regimes~\eqref{regime1} and~\eqref{regime2}, we expect that the typical trajectories approach straight lines, and in~\eqref{regime1} there is an additional question about the typical length of a hop and the number of hops. Here, we would like to understand how the quality of service becomes bad in a large telecommunication area and how many and how large hops the messages would like to take if the constraint $\kmax$ on the maximum number of hops is dropped. 

However, the regime~\eqref{regime3} and our questions here are of a different nature. We would like to determine if the presence of a subarea with a particularly high population density has a significant (positive or negative) impact on the effective use of the relaying system: on the one hand, the trajectories have more available relays in such an area, but on the other hand, the interference achieves high values there. This is a trade-off between entropy and energy that we want to understand.

Let us point out that we are going to work on these questions only in the case where only interference is penalized, but not congestion. We decided this because the description of the minimizer(s) of the variational formula in \cite[Proposition 1.3]{KT17}  is enormously implicit and cumbersome in general, but reduces, if the congestion term is dropped, to relatively simple formulas that are amenable for analytical investigations \cite[Proposition 1.5]{KT17}. In particular, only in this setting we know that the minimizer is unique. We believe that the main qualitative properties persist to the case where also congestion is penalized, as this is purely combinatorial and not spatial. In this paper, the congestion term appears only in modelling discussions. Its formal definition is postponed until Section~\ref{sec-alsocongestion}.

\subsection{Our findings}\label{sec-results}

In regimes~\eqref{regime1}--\eqref{regime2}, we will see that the typical trajectory follows a straight line with exponential decay of probabilities of macroscopic deviations from this shape. Moreover, in regime~\eqref{regime1} we will also find simple formulas for the asymptotic number of hops and the average length of a hop, which turns out to be the same for each hop of the trajectory. One of our most striking findings is that, in regime~\eqref{regime1}, the typical hop length diverges as a power of the logarithm of the distance between the transmitter and the base station, and hence the typical number of hops is sub-linear in the distance. This effect seems to come from the facts that the total mass of the intensity measure of the communication area diverges and that, {\em a priori}, i.e., before switching on the interference weight, every message trajectory of a given length has the same weight, even very unreasonable ones that have long spatial detours, e.g., many loops. 

However, in regime~\eqref{regime3}, we encounter different effects. First we see the following global effect on the total number of relaying hops in the entire system: if the communication area is small (in the sense that all the interferences in the system do not vary much), then the total number of relaying hops vanishes exponentially fast in the diverging parameter of the dense population, regardless of the choice of the densely populated subset, as long as it has positive Lebesgue measure. In some cases, we also detect a local effect on the relaying hops if the densely populated subset is very small: we demonstrate that a certain neighbourhood of that subset is definitely unfavourable for relaying hops for practically all the other users. This is a very clear effect coming from the high interference of the densely populated area, which expels the trajectories away.

Some of our results are easy to guess, and the main value of our work is the explicit characterization of the quantities and the derivation of exponential bounds for deviations. We formulated our results in quite simple settings, by putting the communication area equal to a ball and the user density equal to the Lebesgue measure, but it is clear that they can be extended into various directions with respect to more complex shapes and/or user distributions.

Based on our explicit formulas, we also provide simulations in Section~\ref{sec-simulations}. They illustrate that most of the effects that we derived analytically in limiting settings, i.e., for large values of the parameters, already appear in a very pronounced way for quite moderate values of the parameters.

\subsection{Related literature}\label{sec-literature}
The quality of service in highly dense relay-augmented ad-hoc networks has received particular interest in the last years. A multihop network with users distributed according to a Poisson point process with diverging intensity was investigated in \cite{HJKP15}. Using large deviations methods, that paper derives the asymptotic behaviour of rare frustration events such as many users having an unlikely bad quality of service for an unusually long period of time. \cite{HJP16} also describes frustration probabilities in a network, where relays have a bounded capacity, and users become frustrated when their connection to a relay is refused because it is already occupied; see also \cite{HJ17}.

One difference between these works and the Gibbsian model of the present paper introduced in \cite{KT17} is that the latter one uses a notion of quality of service for the entire system rather than for single transmissions. In particular, trajectories with bad SIR are \emph{a priori} not excluded. There is a random mechanism for choosing the message trajectories of all users, given the user locations, and our results hold almost surely with respect to the point process of user locations in the high-density limit. For these results, users need not form a Poisson point process, and they can even be located deterministically \cite[Section 1.7.4]{KT17}. This is also a difference from \cite{HJKP15,HJP16,HJ17}, where user locations are not fixed and their randomness is (at least partially) responsible for unlikely frustration events.

For literature remarks on the notion and use of SIR, in particular for multiple hops, regarding the choice of a bounded path-loss function, and about the interference penalization term, see Section~\ref{sec-SIRdefdiscussion} later.

Gibbs sampling was used for various aspects of modelling telecommunication networks, e.g., in \cite{ChBK16} for optimal placement of contents in a cellular network, and in \cite{BC12} for power control and associating users to base stations. These Monte Carlo Markov chain methods are used to decrease some kind of cost in the system via a random mechanism, with no easily implementable deterministic methods being available. Our Gibbsian model also has this property if both interference and congestion are penalized. The recent master's thesis of Morgenstern \cite{M18} investigated the use of a Gibbs sampler or a Metropolis algorithm for an experimental realization of our Gibbsian system; see Section~\ref{sec-MCMC} for a summary. 


As for mathematical works about message routeing in interference limited multihop ad-hoc networks, let us mention the papers \cite{BBM11,IV15}. In these works, users are randomly selected as transmitters or receivers in each time slot, the success of transmissions is determined by an SIR constraint, and the main question is about the finiteness of the expected delay and the positivity of the information velocity. Since in the model of the present paper bad SIR values are penalized ``softly", i.e., we do not require that each hop of each message trajectory have a sufficiently large SIR value, further our model does not include a time dimension, our main objects of study are of a different nature. 

\subsection{Organization of this paper}\label{sec-organization}

In Section \ref{sec-beta=0}, we present our Gibbsian model and the results of \cite{KT17} that are relevant for the investigations of the current paper. 
%

Each of the following three sections is devoted to one of our three theoretical investigations, which form the core of this paper, i.e., the analysis of the large-distance limit~\eqref{regime1} in Section~\ref{sec-largespace}, the limit of strong interference penalization~\eqref{regime2} in Section~\ref{sec-gammatoinfty} and the limit of high local density of users~\eqref{regime3} in Section~\ref{sec-largea}. Each of these sections gives the question, the results, the proofs and a discussion in the respective setting.

Section~\ref{sec-conclusion} contains modelling discussions and conclusions. Here we discuss the notion of SIR and sketch some possible extensions of the model. Further, we provide motivations for our Gibbsian ansatz in the case when both interference and congestion are penalized.

Section \ref{sec-gametheory} discusses the relevance and properties of our Gibbsian model and the related optimization problem in the light of game-theoretic considerations in traffic theory. 

Finally, Section~\ref{sec-simulations} gives numerical plots and studies about qualitative properties of our model. 

\section{The Gibbsian model and its behaviour in the high-density limit}\label{sec-beta=0}
In this section, we recall the Gibbsian model of \cite{KT17} and its properties in the limit of high density of users. We present the model in Section~\ref{sec-modeldef}, describe its behaviour in the high-density limit in Section~\ref{sec-LDana} and comment on the notion of the typical trajectory sent out by a user in Section~\ref{sec-typicaltrajectory}. The main objects we will consider in this paper are defined in Sections~\ref{sec-LDana} and~\ref{sec-typicaltrajectory}, while the nomenclature and interpretation of these objects originate from the preceding Section~\ref{sec-modeldef}.

\subsection{The Gibbsian model}\label{sec-modeldef}

We introduce the model that we study in the present paper. This model was introduced in \cite[Section 1.2.4]{KT17}; it is a special case of the general model of \cite{KT17}. Here we only consider the case where only interference is penalized and congestion is not. For the definition of the model where also congestion is weighted, we refer the reader to Section~\ref{sec-alsocongestion}.

For any $n \in \mathbb N$ and for any measurable subset $V$ of $\mathbb R^n$, let $\mathcal{M}(V)$ denote the set of all finite nonnegative Borel measures on $V$. We write $[k]=\lbrace 1,\ldots,k \rbrace$ for $k \in \mathbb N$.  

We are working in $\R^d$ with $d \in \N$ fixed. 
Let $W\subset \mathbb R^d$ be compact, the area of the telecommunication system, containing the origin $o$ of $\R^d$.  
Let $\mu \in \mathcal{M}(W)$ be an absolutely continuous measure on $W$ with $\mu(W)>0$. For $\lambda>0$, we let $X^\lambda=(X_i)_{i \in I^\lambda}=(X_i)_{i=1}^{N(\lambda)}$ be a Poisson point process in $W$ with intensity measure $\lambda\mu$. We refer to the $X_i$ as to the users of the wireless network, thus $N(\lambda)=\# I^\lambda$ is the number of users in the network.

Now, we introduce message trajectories. Fix $X \in X^\lambda$ and $k \in [\kmax]$. A \emph{message trajectory} $t$ from $X$ to $o$ with $|t|=k$ hops is of the form $t=(t_0,\dots,t_k)$, where $t_0=X$ is the transmitter, $t_1,\ldots,t_{k-1} \in X^\lambda$ are the relays and $t_k=o$ is the receiver. Our modelling assumption is that each user $X_i$ submits exactly one message to $o$ along a trajectory $s^i$ (i.e., $s_0^i=X_i$). Further, we write $s = (s^i)_{i \in I^\lambda}$ for the configuration of all these trajectories. We denote by $\mathcal S_{\kmax}(X^\lambda)$ the set of all such trajectory configurations.

Next, we introduce interference penalization. We choose a \emph{path-loss function}, which describes the propagation of signal strength over distance. This is a monotone decreasing, continuous function $\ell\colon [0,\infty) \to (0,\infty)$. 
A typical choice is $\ell$ corresponding to ideal Hertzian propagation, i.e.\ $\ell(r)=\min \lbrace 1, r^{-\alpha} \rbrace$, for some $\alpha>0$ (see e.g. \cite[Section II.]{GT08}). The \emph{signal-to-interference ratio (SIR)} of a transmission from $X_i \in X^\lambda$ to $x \in W$ in the presence of the users in $X^\lambda$ is defined \cite{HJKP15} as
\[ \SIR(X_i,x,X^\lambda)= \frac{\ell(|X_i-x|)}{\frac{1}{\lambda} \sum_{j \in I^\lambda} \ell(|X_j-x|)}. \numberthis\label{SIR} \]
The sum in the denominator of the right-hand side of \eqref{SIR} is the \emph{interference}. In fact, according to conventional nomenclature, one should say ``total received power" instead of interference and ``signal-to-total received power ratio" instead of SIR. We discuss this in Section~\ref{sec-SIRdefdiscussion}, where we also comment on the factor $\smfrac{1}{\lambda}$ in the denominator of \eqref{SIR} and on the effect of the boundedness of the path-loss function. In Section~\ref{sec-timedependence} we point out how \eqref{SIR} would change if we introduced time dependence in our model.

Let us fix a parameter $\gamma>0$. Now, for a message trajectory $t$ from $X$ to $o$, we define 
\[ \Pi_{\gamma,\lambda}(t) = N(\lambda)^{1-|t|} \prod_{l=1}^{|t|}  \exp \Big( -\gamma \SIR(t_{l-1},t_l,X^\lambda)^{-1} \Big). \numberthis\label{Pinotation}
\]
Now, the central object studied in \cite{KT17} is the following Gibbs distribution on the set of configurations of trajectories. For $s=(s^i)_{i \in I^\lambda} \in \Scal_{\kmax}(X^\lambda)$ put
\[ \mathrm P^{\gamma}_{\lambda,X^\lambda}(s) = \frac{1}{Z^{\gamma}_\lambda(X^\lambda)} \prod_{i \in I^\lambda} \Pi_{\gamma,\lambda}(s^i). \numberthis\label{Gibbsdistribution} \]
This is the Gibbs distribution with a uniform and independent \emph{a priori} measure (see~\cite[Section 1.2.2]{KT17} for details), subject to an exponential weight with the sums of the reciprocals of the SIR values of all hops. Here
\[ Z^{\gamma}_{\lambda}(X^\lambda) =\sum_{r=(r_i)_{i \in I^\lambda} \in \Scal_{\kmax}(X^\lambda)} \prod_{i \in I^\lambda} \Pi_{\gamma,\lambda}(r^i) \numberthis\label{zed} \]
is the normalizing constant, which is referred to as \emph{partition function}. Note that $\mathrm P^{\gamma}_{\lambda, X^\lambda}(\cdot)$ is random and defined conditional on $X^\lambda$, and it is a probability measure on $\mathcal{S}_{\kmax}(X^\lambda)$.

\subsection{The limiting behaviour of the telecommunication system}\label{sec-LDana}

We study the above wireless communication system in the {\em high-density limit} $\lambda\to\infty$. 


Now we summarize the results of \cite{KT17} 
that are relevant for the present paper.
For $k\in\N$, elements of the product space $W^k=W^{\{0,1,\dots,k-1\}}$ are denoted as $(x_0,\ldots,x_{k-1})$. For $l=0,\ldots,k-1$, the $l$-th marginal of a measure $\nu_k \in \mathcal{M}(W^k)$ is denoted by $\pi_l\nu_k\in \mathcal{M}(W)$, i.e., $\pi_l \nu_k(A)=\nu_k(W^{\{0,\dots,l-1\}}\times A\times W^{\{l+1,\dots,k-1\}})$ for any Borel set $A$ of $W$. 

We assume that  the {\em empirical measure} of $X^\lambda$ normalized by $1/\lambda$, i.e., the measure
\begin{equation}\label{userempirical}
L_\lambda=\frac{1}{\lambda} \sum_{i \in I^\lambda} \delta_{X_i} ,
\end{equation}
converges to $\mu$ almost surely in the {high-density limit} $\lambda \to \infty$. (We write $\delta_x$ for the Dirac measure at $x$.) This condition is satisfied e.g.~if $\lambda \mapsto X^\lambda$ is increasing; for further details see \cite[Section 1.7.4]{KT17}. However, note that $L_\lambda$ is not normalized; its total mass converges towards $\mu(W)$.

For fixed $k \in [\kmax]$ and for a trajectory collection $s=(s^i)_{i\in I^\lambda} \in \mathcal{S}_{\kmax}(X^\lambda)$, we define the empirical measure of all the $k$-hop trajectories of $s$ as
\begin{equation}\label{Rdef}
R_{\lambda,k}(s)=\frac{1}{\lambda} \sum_{i \in I^\lambda\colon |s^i|=k} \delta_{(s_0^i,\ldots,s_{k-1}^i)}  \in \mathcal{M}(W^k).
\end{equation}
This is the main object behind the following analysis; it registers where the main bulk of the trajectories runs. Note that $R_{\lambda,k}$ is not normalized. Since each user sends out exactly one message, we have 
\[ \sum_{k=1}^{\kmax} \pi_0 R_{\lambda,k}(s)=L_\lambda. \numberthis\label{i-discrete0}\]
This assumption can be relaxed, see Section~\ref{sec-multiplemessages} for a discussion about this. Note that for $X_i \in X^\lambda$ and $y \in W$, we have
\[ \SIR(X_i,y,X^\lambda)=\frac{\int_{W} L_\lambda(\d z) \ell(|z-y|)}{\ell(|X_i-y|)}. \numberthis\label{SIRexpressed} \]

We denote by $S$ a random variable with distribution ${\rm P}_{\lambda,X^\lambda}^{\gamma}$. Since $L_\lambda \Rightarrow \mu$ as $\lambda \to \infty$ and \eqref{i-discrete0} holds for any $\lambda>0$, subsequential limits of $(R_{\lambda,k}(S))_{k \in [\kmax]}$ in the coordinatewise weak topology are easily seen to be of the form $\Sigma=(\nu_k)_{k \in [\kmax]}$ with $\nu_k \in \Mcal(W^k)$, satisfying
\[ \sum_{k=1}^{\kmax} \pi_0 \nu_k=\mu, \numberthis\label{i-beta=0} \]
cf.~\cite[Section 3.4]{KT17}. For such $\Sigma$, 
we define the following analogue of \eqref{SIRexpressed} with $L_\lambda$ replaced by its limit $\mu$:
 \[ g(x,y)=\frac{\int_W \mu(\d z) \ell(|z-y|)}{\ell(|x-y|)}. \numberthis\label{gdef} \]

The key result \cite[Proposition 1.5, parts (3), (4)]{KT17} about the limiting behaviour of the telecommunication system that we will use this paper is the following.

\begin{prop}[Law of large numbers for the empirical measures]\label{prop-variationbeta=0} Let $\gamma>0$ and $\kmax \in \N \setminus \{ 1 \}$. Then, almost surely with respect to $X^\lambda$, as $ \lambda\to\infty$, the distribution of $\Sigma_\lambda(S)=(R_{\lambda,k}(S))_{k\in[\kmax]}$ converges coordinatewise weakly to the collection of measures
$\Sigma=(\nu_k)_{k=1}^{\kmax}$, where
\[ \nu_k(\d x_0,\ldots,\d x_{k-1} ) = \mu(\d x_0) A(x_0) \prod_{l=1}^{k-1} \frac{\mu(\d x_l)}{\mu(W)} \e^{-\gamma \sum_{l=1}^k g(x_{l-1},x_l)}, \quad x_k=o,~k \in [\kmax],  \numberthis\label{nukminimizerbeta=0} \]
and the normalizing function $A$ is defined as
\[ \frac1{A(x_0)}=\sum_{k=1}^{\kmax} \int_{W^{k-1}} \prod_{l=1}^{k-1} \frac{\mu(\d x_l)}{\mu(W)} \e^{-\gamma \sum_{l=1}^k g(x_{l-1},x_l)}, \qquad x_0 \in W, \numberthis\label{Adefnew} \]
so that \eqref{i-beta=0} holds.
\end{prop}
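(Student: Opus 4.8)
The plan is to recognize Proposition~\ref{prop-variationbeta=0} as a restatement (in the special no-congestion case) of \cite[Proposition 1.5]{KT17}, and to outline how the formula for the limiting collection $\Sigma=(\nu_k)_{k=1}^{\kmax}$ arises from the variational characterization mentioned in the introduction. Since the excerpt permits us to invoke earlier results, the cleanest route is: first, recall from \cite{KT17} that, almost surely with respect to $X^\lambda$, the empirical measures $\Sigma_\lambda(S)=(R_{\lambda,k}(S))_{k\in[\kmax]}$ concentrate (in the coordinatewise weak topology) on the unique minimizer of the rate functional obtained in the high-density large-deviations analysis, the minimization being over collections $\Sigma=(\nu_k)_{k\in[\kmax]}$ satisfying the constraint \eqref{i-beta=0}. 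The functional is, schematically, an entropy term (relative entropy of each $\nu_k$ against the $a~priori$ product reference measure $\mu(\d x_0)\prod_{l=1}^{k-1}\mu(\d x_l)/\mu(W)$, summed over $k$, together with the combinatorial $1/\mu(W)$ normalizations coming from $N(\lambda)^{1-|t|}$) plus the interference energy $\gamma\sum_{l=1}^k g(x_{l-1},x_l)$ integrated against $\nu_k$. The claim is then that \eqref{nukminimizerbeta=0}--\eqref{Adefnew} is exactly this minimizer.

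The key steps I would carry out are the following. First, write down the functional $I(\Sigma)=\sum_{k=1}^{\kmax}\big[\mathrm{Ent}(\nu_k\,|\,\rho_k)+\gamma\int_{W^k}\sum_{l=1}^k g(x_{l-1},x_l)\,\nu_k(\d x)\big]$, where $\rho_k(\d x_0,\dots,\d x_{k-1})=\mu(\d x_0)\prod_{l=1}^{k-1}\mu(\d x_l)/\mu(W)$, restricted to the affine constraint set $\mathcal{C}=\{\Sigma:\sum_k\pi_0\nu_k=\mu\}$. Second, observe that $I$ is strictly convex and lower semicontinuous on $\mathcal{C}$, so a unique minimizer exists; this is the uniqueness assertion highlighted in the introduction. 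Third, perform a Lagrange/Euler--Lagrange computation: introducing a multiplier function $x_0\mapsto \log A(x_0)$ for the constraint $\sum_k\pi_0\nu_k=\mu$, the stationarity condition for each $\nu_k$ forces the Radon--Nikodym derivative $\d\nu_k/\d\rho_k$ to be proportional to $A(x_0)\e^{-\gamma\sum_{l=1}^k g(x_{l-1},x_l)}$, which is precisely \eqref{nukminimizerbeta=0}. Fourth, determine $A$: the constraint $\sum_{k=1}^{\kmax}\pi_0\nu_k=\mu$ means that integrating \eqref{nukminimizerbeta=0} over $x_1,\dots,x_{k-1}$ and summing over $k$ must return $\mu(\d x_0)$, which yields exactly \eqref{Adefnew} after dividing out $\mu(\d x_0)$ — here one uses that $\mu$ is absolutely continuous so the densities can be compared pointwise $\mu$-a.e. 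Finally, one should check that the candidate is admissible: $A$ is well-defined and positive because the sum in \eqref{Adefnew} is a finite sum of positive finite integrals (the integrand is bounded since $g$ is bounded on $W\times W$, $\ell$ being continuous and strictly positive on the compact $W$ and $\mu$ finite), and the resulting $\nu_k$ are finite measures with the right marginal.

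The main obstacle is not any single computation but the justification that the sequence of random empirical measures actually converges to this minimizer almost surely — i.e., the passage from the Gibbs distribution $\mathrm P^\gamma_{\lambda,X^\lambda}$ at finite $\lambda$ to the variational problem in the limit $\lambda\to\infty$. This is precisely the content of \cite[Proposition 1.5]{KT17}, and it rests on the large-deviations/Laplace-principle analysis developed there: one needs the empirical-measure large deviations for the a~priori independent uniform trajectory choices, a continuity/boundedness argument replacing $L_\lambda$ by $\mu$ in \eqref{SIRexpressed} (using $L_\lambda\Rightarrow\mu$ and the continuity of $\ell$), and a Varadhan-type argument to incorporate the exponential interference tilt, followed by the standard identification of the support of the limit with the unique minimizer of the rate function. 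Since all of this is established in \cite{KT17}, in the present paper the ``proof'' of Proposition~\ref{prop-variationbeta=0} is essentially a pointer to \cite[Proposition 1.5, parts (3), (4)]{KT17} together with the remark that dropping the congestion term collapses the general implicit minimizer of \cite[Proposition 1.3]{KT17} to the explicit product-type formula \eqref{nukminimizerbeta=0}; the one genuinely new thing to verify here is simply that \eqref{Adefnew} is the correct normalization making \eqref{i-beta=0} hold, which is the short marginal computation described above.
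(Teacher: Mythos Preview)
Your proposal is correct and matches the paper's treatment: the paper does not prove Proposition~\ref{prop-variationbeta=0} here but states it as \cite[Proposition 1.5, parts (3), (4)]{KT17}, and in Section~\ref{sec-varformula} records exactly the variational characterization (entropy $\mathrm J$ plus interference energy $\gamma\mathrm S$ under the constraint \eqref{i-beta=0}) that you outlined, with the explicit minimizer \eqref{nukminimizerbeta=0}--\eqref{Adefnew}. Your Lagrange-multiplier sketch and the marginal computation fixing $A$ are precisely the intended reading of why the general implicit minimizer of \cite[Proposition 1.3]{KT17} collapses to this product formula when $\beta=0$.
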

Let us note that the case $\kmax=1$ is trivial because in this case, all messages are transmitted directly to the base station, and thus
$(R_{\lambda,k}(S))_{k \in [1]}=(L_\lambda)$ converges coordinatewise weakly to $\Sigma=(\nu_1)$, where $\nu_1=\mu$. 

In the limiting measure \eqref{nukminimizerbeta=0}, the starting points of the $k$-hop message trajectories, $k\in [\kmax]$, are chosen according to the measure $\mu(\d x_0) A(x_0)$ and the $l$th relays according to the measure $\mu(\d x_l)/\mu(W)$ for all $l \in [k-1]$, exponentially weighted by the limiting interference penalization term $\gamma\sum_{l=1}^k g(x_{l-1},x_l)$. 

In Section~\ref{sec-varformula} we will explain that the measures \eqref{nukminimizerbeta=0} form the unique minimizer of a characteristic variational formula; this property will not be used for deriving the main results of the present paper.
For further details about the limiting behaviour of the system, see~\cite[Sections~1.6, 1.7]{KT17}.

\subsection{Interpretation of the limiting trajectory distribution}\label{sec-typicaltrajectory}

The purpose of the present paper is to make further qualitative assertions about the ``typical'' trajectory from a given transmission site $x_0\in W$ to the origin, after having taken the limit $\lambda\to\infty$. A definition of the ``typical'' trajectory as a random variable is not immediate, due to the nature of this setting. In the present paper, we will focus on the probability measure on $\bigcup_{k \in [\kmax]} (\{ k \} \times W^{k-1})$ given by its Radon--Nikodym derivative
\begin{equation}\label{Tdef}
T_{x_0}(k,x_1,\dots,x_{k-1})= \frac{\nu_k(\d x_0,\d x_1,\dots,\d x_{k-1})}{\big(\sum_{k=1}^{\kmax} \pi_0 \nu_k(\d x_0) \big)\mu(\d x_1)\dots\mu(\d x_{k-1})} = \frac{\nu_k(\d x_0,\d x_1,\dots,\d x_{k-1})}{\mu(\d x_0)\mu(\d x_1)\dots\mu(\d x_{k-1})},
\end{equation}
with respect to $\sum_{k\in[\kmax]}(\delta_k\otimes \mu^{\otimes (k-1)})$. This function is the main object of our study in the present paper. We normalized $T_{x_0}$ in such a way that $\sum_{k\in[\kmax]}\int_{W^{k-1}}T_{x_0}(k,x_1,\ldots,x_{k-1})\mu(\d x_1) \ldots \mu(\d x_{k-1})=1$. According to Proposition~\ref{prop-variationbeta=0},
\begin{equation}\label{Tident}
T_{x_0}(k,x_1,\dots,x_{k-1})=A(x_0) \mu(W)^{-(k-1)}\prod_{l=1}^{k-1}\e^{-\gamma \sum_{l=1}^k g(x_{l-1},x_l)},
\end{equation}
where we recall \eqref{gdef}. We will use the convention that the 0th coordinate of $T_{x_0}$ is the one corresponding to $k$ and the $l$th is the one corresponding to $x_l$, for $l \in [k-1]$. This way, the marginal $\pi_0 T_{x_0}$ is a measure on $[\kmax]$.

We note that also the measure $M=\sum_{k=1}^{\kmax} \sum_{l=1}^{k-1} \pi_l \nu_k$ carries interesting information about the system. Indeed, in \cite[Section 1.3]{KT17} it was explained that, at a position $x \in W$, the typical number of incoming hops of a user at $x$ is Poisson distributed with parameter $M(\d x)/\mu(\d x)$, and the total mass $M(W)$ is the amount of relaying hops in the entire system, with the convention that it is zero if every message hops directly into $o$ without any relaying hop. Part of our analysis will also be devoted explicitly to $M$, see Section~\ref{sec-largea}.

\section{Large communication areas with large transmitter--receiver distances}\label{sec-largespace}

This section is devoted to the analysis of the highly dense telecommunication system described in Section~\ref{sec-LDana} in regime~\eqref{regime1}, i.e., in the limit of a large communication area coupled with a large distance of the user from the base station. In Section~\ref{sec-kopt}, we  present our main results and in Section~\ref{sec-Aproof} we prove them. Section~\ref{sec-koptdiscussion} includes discussions related to this regime. 

\subsection{The typical number, length, and direction of hops in a large-distance limit} \label{sec-kopt}

In this section, the main object of interest is the typical shape of the trajectory from a certain site to the origin, in particular the typical length of any of the hops, the number of hops, and the spatial progress of the trajectory, in particular whether or not it runs along the straight line or how strongly it deviates from it. We will answer these questions for the special choice that $W$ is a closed ball around the origin, $\mu$ is the Lebesgue measure on $W$, and the path-loss function $\ell$ corresponds to ideal Hertzian propagation so that $b=\int_{\R^d} \ell(|x|) \d x<\infty$, that is, $\ell(r)=\min\{ 1, r^{-\alpha} \}$ for some $\alpha>d$.

Furthermore, in order to obtain a transparent picture and to derive a neat result, we will have to assume that the starting site of our trajectory is far away from the origin. 
In such a setting, it is plausible to expect that, as the radius of the ball tends to infinity, a proportion of users that tends to one takes the same order of magnitude of number of hops. This also gives information about the typical length and direction of each hop in large but still compact communication areas.

We will see that this setting exhibits the interesting property that the typical number of hops diverges to infinity as the distance of the user $x_0$ from $o$ tends to infinity, however, in a sublinear way, more precisely, like the distance divided by a power of its logarithm. Second, using the asymptotics of the value of this largest summand in \eqref{Adefnew}, one can conclude about the typical length of the hops and about how much they deviate from the straight line between the transmitter and the receiver $o$. In our specific setting, we will be able to obtain precise and explicit asymptotics for all these quantities. 

We denote the radius of the communication area $W=\overline{B_r(o)}$ by $r$, and we recall that $\kmax$ is the maximal hop number. We consider the limit of large $r$ and large $\kmax$.  We consider one user placed at $x_0\in W$ with a distance from the origin $|x_0|=r_0$ being large, such that $r>r_0$, but $r\asymp r_0$. (We write ``$\asymp$'' if the quotient of the two sides stays bounded and bounded away from zero.) Then one can say that for large $r$, $x_0$ is a ``typical'' location of a user in $\overline{B_r(o)}$, chosen uniformly at random.

In our first result, Theorem~\ref{theorem-expectedhops}, we examine the ``typical'' number of hops of a trajectory from $x_0$ to $o$ as a random variable under the marginal distribution $\pi_0 T_{x_0}$ on $\N$. According to \eqref{Tident}, in the present setting, this is given by
\begin{equation}
 \pi_0 T_{x_0}(k)=A(x_0) a_k(x_0)\qquad \mbox{where }
a_k(x_0)=\int_{(B_r(o))^{k-1}} \prod_{l=1}^{k-1} \Big(\omega_d r^{-d} \,\e^{-\gamma g(x_{l-1},x_l)}\,\d x_{l}\Big), \qquad x_k=o, \numberthis\label{wishfulA}
\end{equation}
where $\omega_d$ is the volume of the unit ball in $\R^d$, and we recall that $g(x_{l-1},x_l)=\frac{\int_W \d y\, \ell(|y-x_l|)}{\ell(|x_{l-1}-x_l|)}$. It is \emph{a priori} not clear what the relation between $r$ and $\kmax$ in the limiting setting should be in order to obtain interesting assertions. Nevertheless, while $A$ depends on $\kmax$ via the identity $1/A(x_0)=\sum_{k=1}^{\kmax} a_k(x_0)$, we observe that the terms 
$a_k(x_0)$ can also be defined for $k > \kmax$ analogously to \eqref{wishfulA} for $k \leq \kmax$. Thus, it will be our first task to find the asymptotics of $a_k(x_0)$ without any reference to $\kmax$. We encounter a large deviation principle on a quite surprising scale.


\begin{theorem}[Large deviations for the hop number] \label{theorem-expectedhops}
Fix $t\in (0,\infty)$. Then, in the limit $r_0\to\infty$ with $r>r_0=|x_0| \asymp r$, for any choice of $r_0\mapsto k(r_0)\in\N$,
\begin{numcases}{\frac{1}{r_0 \log^{1-1/\alpha} r_0} \log a_{k(r_0)} (x_0) }
=-(dt + b \gamma t^{1-\alpha} )+o(1) & if~$\frac{k(r_0)}{r_0\log^{-1/\alpha} r_0}\to t$, \label{first-hopLDP} \\
\leq - b \gamma t^{1-\alpha}+o(1) & if~$\frac{k(r_0)}{r_0\log^{-1/\alpha} r_0}\leq t+o(1)$ \label{second-hopLDP} \\
\leq -dt+o(1) & if~$\frac{k(r_0)}{r_0\log^{-1/\alpha} r_0}\geq t+o(1)$. \label{third-hopLDP}
\end{numcases}
where we recall that $b=\int_{\R^d} \d y\, \ell(|y|)$.
\end{theorem}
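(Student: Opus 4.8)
The object to analyse is
\[
a_k(x_0)=\int_{W^{k-1}}\Big(\prod_{l=1}^{k-1}\mu(W)^{-1}\d x_l\Big)\,\e^{-\gamma\sum_{l=1}^{k}g(x_{l-1},x_l)},\qquad x_k=o,\ \ \mu(W)=\omega_d r^d,
\]
reading \eqref{wishfulA} with the interference weight of all $k$ hops, as in \eqref{Adefnew}. Since $\int_W\ell(|\,\cdot\,-x_l|)\le b$ and $\ell(u)^{-1}=\max\{1,u^\alpha\}$, we have $g(x_{l-1},x_l)\le b\max\{1,|x_{l-1}-x_l|^\alpha\}$, while $g(x_{l-1},x_l)\ge c_0\max\{1,|x_{l-1}-x_l|^\alpha\}$ with $c_0:=\min_{y\in W}\int_W\ell(|y-z|)\d z>0$. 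Thus every relay carries an entropic factor of order $r^{-d}$ out front (favouring \emph{few} hops) against an energy $\asymp\gamma b|x_{l-1}-x_l|^\alpha$ per long hop (favouring \emph{many short} hops). Equating the per-hop costs $d\log r_0$ and $\gamma b(r_0/k)^\alpha$ forces the hop length $r_0/k\asymp\log^{1/\alpha}r_0$, i.e.\ $k\asymp r_0\log^{-1/\alpha}r_0$, and then each cost, summed over the $k$ hops, is of order $\Lambda_0:=r_0\log^{1-1/\alpha}r_0$; this is the scale of the theorem, and $k(r_0)/(r_0\log^{-1/\alpha}r_0)\to t$ gives $k\log r_0\sim t\Lambda_0$ and $k^{1-\alpha}r_0^\alpha\sim t^{1-\alpha}\Lambda_0$.

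\textbf{Upper bound (all three lines).} Two inputs. \emph{(i) Boundary estimate.} Because $\alpha>d$, $\int_{|w|\ge\rho}\ell(|w|)\d w=\frac{d\omega_d}{\alpha-d}\rho^{d-\alpha}\downarrow0$; hence for each $\eta\in(0,1)$ there is a \emph{fixed} $\rho(\eta)<\infty$ with $\int_W\ell(|y-z|)\d z\ge b(1-\eta)$ whenever $\dist(y,\partial W)\ge\rho(\eta)$. Call this set $W_\eta$. \emph{(ii) A uniform energy bound:} for \emph{every} trajectory $(x_0,\dots,x_{k-1},o)$ in $W$,
\[
\sum_{l=1}^{k}g(x_{l-1},x_l)\ \ge\ b(1-\eta)\,k^{1-\alpha}r_0^\alpha\,(1-o(1)).
\]
To see this, let $j^\ast$ be the largest index in $\{0,\dots,k-1\}$ with $x_{j^\ast}\notin W_\eta$, and $j^\ast:=0$ if all relays lie in $W_\eta$. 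Then $x_{j^\ast+1},\dots,x_{k-1},x_k=o$ all lie in $W_\eta$; since the numerator of $g$ only sees the head of the hop, $g(x_{l-1},x_l)\ge b(1-\eta)|x_{l-1}-x_l|^\alpha$ for $l>j^\ast$. Convexity of $u\mapsto u^\alpha$ ($\alpha\ge1$) and $\sum_{l=j^\ast+1}^{k}(x_{l-1}-x_l)=x_{j^\ast}$ give $\sum_{l>j^\ast}g(x_{l-1},x_l)\ge b(1-\eta)(k-j^\ast)^{1-\alpha}|x_{j^\ast}|^\alpha\ge b(1-\eta)k^{1-\alpha}|x_{j^\ast}|^\alpha$, and $|x_{j^\ast}|\ge r_0-\rho(\eta)$ (either $x_{j^\ast}=x_0$, or $x_{j^\ast}\notin W_\eta$ forces $|x_{j^\ast}|>r-\rho(\eta)>r_0-\rho(\eta)$ by (i)), which is $r_0(1-o(1))$. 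With (ii) in hand, fix also $\epsilon\in(0,1)$, write $\gamma\sum g=\gamma(1-\epsilon)\sum g+\gamma\epsilon\sum g$, bound the first summand by (ii) and the second by $\gamma\epsilon c_0\sum_l|x_{l-1}-x_l|^\alpha$, and integrate using the substitution $y_l=x_{l-1}-x_l$ (Jacobian $1$): $\int_{W^{k-1}}\e^{-\gamma\epsilon c_0\sum_l|x_{l-1}-x_l|^\alpha}\prod_l\d x_l\le C(\epsilon)^{k-1}$ with $C(\epsilon)=\int_{\R^d}\e^{-\gamma\epsilon c_0|y|^\alpha}\d y<\infty$. This gives
\[
a_k(x_0)\ \le\ \Big(\tfrac{C(\epsilon)}{\omega_d r^d}\Big)^{k-1}\exp\!\Big(-\gamma(1-\epsilon)b(1-\eta)\,k^{1-\alpha}r_0^\alpha(1-o(1))\Big).
\]
Taking $\log$, dividing by $\Lambda_0$, and using $\log r\sim\log r_0$: the prefactor contributes $\sim-d(k-1)\log r_0/\Lambda_0$ and the exponential $\sim-\gamma(1-\epsilon)(1-\eta)b\,k^{1-\alpha}r_0^\alpha/\Lambda_0$. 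If $k\sim t\,r_0\log^{-1/\alpha}r_0$ these tend to $-dt$ and $-(1-\epsilon)(1-\eta)b\gamma t^{1-\alpha}$; letting $r_0\to\infty$ then $\epsilon,\eta\to0$ yields the ``$\le$'' part of \eqref{first-hopLDP}. If $k\le(t+o(1))r_0\log^{-1/\alpha}r_0$, drop the prefactor ($\le1$ for large $r$) and use $k^{1-\alpha}\ge((t+o(1))r_0\log^{-1/\alpha}r_0)^{1-\alpha}$: this is \eqref{second-hopLDP}. If $k\ge(t+o(1))r_0\log^{-1/\alpha}r_0$, drop the exponential ($\le1$): this is \eqref{third-hopLDP}.

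\textbf{Lower bound (matching \eqref{first-hopLDP}).} Let $k\sim t\,r_0\log^{-1/\alpha}r_0$, so $r_0/k\sim t^{-1}\log^{1/\alpha}r_0\to\infty$. Restrict the integral defining $a_k$ to the event that each relay $x_l$ lies in the ball of radius $\delta r_0/k$ around the equal-hop point $(1-l/k)x_0$, with $\delta\in(0,\tfrac12)$ small; since $\dist((1-l/k)x_0,\partial W)\ge(l/k)r_0\ge r_0/k>\delta r_0/k$ for $l\ge1$, these balls lie in $W$. On this event each hop has length in $(r_0/k)(1\pm2\delta)$, hence exceeds $1$, so $\sum_{l=1}^{k}g\le b(1+2\delta)^\alpha k^{1-\alpha}r_0^\alpha$, and the event has Lebesgue measure $(\omega_d\delta^d(r_0/k)^d)^{k-1}$. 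Hence
\[
a_k(x_0)\ \ge\ \Big(\tfrac{\omega_d\delta^d(r_0/k)^d}{\omega_d r^d}\Big)^{k-1}\exp\!\Big(-\gamma b(1+2\delta)^\alpha k^{1-\alpha}r_0^\alpha\Big).
\]
Here $\log\big(\delta^d(r_0/k)^d/r^d\big)=-d\log r_0+O(\log\log r_0)$ (using $r\asymp r_0$ and $\log(r_0/k)=O(\log\log r_0)$), so the prefactor contributes $-dt\Lambda_0(1-o(1))$ — the $O(\log\log r_0)$ per relay sums to $o(\Lambda_0)$ since $\log\log r_0=o(\log r_0)$ — and the exponential contributes $-(1+2\delta)^\alpha b\gamma t^{1-\alpha}\Lambda_0(1-o(1))$. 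Dividing by $\Lambda_0$, letting $r_0\to\infty$ then $\delta\to0$ gives $\liminf\Lambda_0^{-1}\log a_k(x_0)\ge-(dt+b\gamma t^{1-\alpha})$, which completes \eqref{first-hopLDP}.

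\textbf{Main obstacle.} The delicate point is the \emph{uniform} energy bound (ii): a trajectory could in principle cheapen its hops by sending relays into the $O(1)$-thick shell near $\partial W$, where $\int_W\ell(|y-\cdot|)$ drops by up to a factor $\tfrac12$, so a naive pointwise bound only gives the constant $c_0\approx b/2$ rather than $b$ and would not prove the theorem. The fix — single out the \emph{last} relay $x_{j^\ast}$ outside $W_\eta$, note the remaining hops have all their heads in $W_\eta$ yet must still cover the displacement $|x_{j^\ast}|\ge r_0-\rho(\eta)\sim r_0$, and apply Jensen to that tail alone — restores the sharp constant $b$ uniformly, with no extra hypothesis on $r/r_0$. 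Everything else is Laplace-type bookkeeping; the one thing to watch is that the entropic volume $(r_0/k)^{d(k-1)}\approx(\log^{1/\alpha}r_0)^{d(k-1)}$ is sub-exponential on the scale $\Lambda_0$, which holds because $\log\log r_0=o(\log r_0)$.
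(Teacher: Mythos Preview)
Your proof is correct and follows essentially the same strategy as the paper's: the lower bound restricts to tubular neighbourhoods of the straight segment $[x_0,o]$, the energy upper bound isolates the last relay near $\partial W$ and applies Jensen to the remaining hops, and the entropy upper bound integrates the residual Gaussian-type weight over $\R^d$ after a triangular substitution.

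Your packaging is somewhat more economical than the paper's. Where the paper proves \eqref{second-hopLDP} and \eqref{third-hopLDP} separately (via Lemma~\ref{lemma-gapproxbound} and the computation \eqref{fullintegral}) and only afterwards splices them together for the upper bound in \eqref{first-hopLDP}, you derive a single master inequality $a_k\le (C(\epsilon)/\omega_d r^d)^{k-1}\exp(-\gamma(1-\epsilon)b(1-\eta)k^{1-\alpha}r_0^\alpha(1-o(1)))$ valid for \emph{all} $k$ and then read off all three cases by dropping whichever factor is inconvenient. Two further simplifications over the paper: (a) by bounding $1/\ell(u)\ge u^\alpha$ pointwise and applying Jensen to $u\mapsto u^\alpha$ rather than to $1/\ell$, you avoid the paper's side assumption $k(r_0)\le\tfrac12 r_0$ in Lemma~\ref{lemma-gapproxbound}; (b) your boundary layer has \emph{fixed} thickness $\rho(\eta)$, whereas the paper uses a diverging buffer $r-s(r)\to\infty$ --- your choice suffices because $\alpha>d$ makes the tail of $\ell$ integrable uniformly. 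Conversely, the paper's separate treatment of \eqref{third-hopLDP} makes transparent that this bound needs nothing but the finiteness of $\int_{\R^d}\e^{-c/\ell(|y|)}\,\d y$, a point slightly obscured in your unified bound.
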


The upper bound in \eqref{second-hopLDP} follows from the convexity of $1/\ell(|\cdot|)$ and a comparison between the functionals $(x,y) \mapsto g(x,y)$ and $(x,y) \mapsto  b / \ell(|x-y|)$. 

Note that Theorem~\ref{theorem-expectedhops} identifies the growth of $\log a_{k(r_0)}(x_0)$ on the scale $r_0 \log^{1-1/\alpha}r_0$ for $k(r_0)$ on the scale $r_0 \log^{-1/\alpha}r_0$; indeed, the second and third line rule out small and large values of $k(r_0)$ on  that scale, and the first line identifies the precise dependence on the prefactor. In more technical terms, $a_k(x_0)$ satisfies, with $k(r_0)\asymp r_0\log^{-1/\alpha}(r_0)$, a large deviation principle on the scale $r_0 \log^{1-1/\alpha} r_0$ with rate function $t\mapsto dt + b \gamma t^{1-\alpha}$. It is easily seen that this rate function has a unique minimizer
\begin{equation}\label{easyminimum}
t^*=\arg\min_{t \in (0,\infty)} \big(dt + b \gamma t^{1-\alpha}\big) = \Big( \frac{b \gamma(\alpha-1)}d\Big)^{1/\alpha}
\end{equation}
with minimum value 
\[ d t^*+b\gamma(t^*)^{1-\alpha}
=\frac{(b\gamma)^{1/\alpha}}{(\alpha-1) d}\Big[d+\big((\alpha-1)d\big)^{1/\alpha}\Big]. \]

As a consequence, we have the following law of large numbers.

\begin{cor} \label{cor-A}
In the limit $r_0\to\infty$ with $r>r_0=|x_0| \asymp r$, any maximizer $k^*(r_0)$ of $\N\ni k\mapsto a_k(x_0)$ satisfies 
\[ k^*(r_0)\sim t^*\frac{ r_0}{  \log^{1/\alpha} r_0}.
 \numberthis\label{koptlimit} \]
Further, if $\kmax\geq k^*(r_0)$ for at least one such maximizer for all sufficiently large $r_0>0$, then we have
\[ \frac{1}{r_0 \log^{1-1/\alpha} r_0} \log \frac{1}{A(x_0)} \to 
-( d t^*+b\gamma(t^*)^{1-\alpha}). \numberthis\label{Aexprate} \]
\end{cor}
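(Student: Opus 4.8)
The plan is to deduce both assertions from Theorem~\ref{theorem-expectedhops} by a Laplace-type ``largest-term'' argument, using that $1/A(x_0)=\sum_{k=1}^{\kmax}a_k(x_0)$ and that this finite sum is governed, on the exponential scale $r_0\log^{1-1/\alpha}r_0$, by its maximal summand $a_{k^*(r_0)}(x_0)$. Write $c:=dt^*+b\gamma(t^*)^{1-\alpha}$ for the minimal value of the rate function; recall from \eqref{easyminimum} that $t^*$ is its \emph{unique} minimiser and that $\alpha>d\ge1$.

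To prove \eqref{koptlimit}, first apply \eqref{first-hopLDP} along the deterministic sequence $k(r_0):=\lfloor t^* r_0\log^{-1/\alpha}r_0\rfloor$, for which $k(r_0)/(r_0\log^{-1/\alpha}r_0)\to t^*$, obtaining $\log a_{k(r_0)}(x_0)/(r_0\log^{1-1/\alpha}r_0)\to-c$; since $k^*(r_0)$ maximises $k\mapsto a_k(x_0)$, this gives
\[
\liminf_{r_0\to\infty}\ \frac{\log a_{k^*(r_0)}(x_0)}{r_0\log^{1-1/\alpha}r_0}\ \ge\ -c .
\]
Now set $\theta(r_0):=k^*(r_0)\log^{1/\alpha}(r_0)/r_0$ and, along any subsequence, pass to a further subsequence with $\theta(r_0)\to\tau\in[0,\infty]$ (compactness). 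If $\tau\in(0,\infty)$, then $k^*(r_0)/(r_0\log^{-1/\alpha}r_0)\to\tau$, so \eqref{first-hopLDP} forces $\log a_{k^*(r_0)}(x_0)/(r_0\log^{1-1/\alpha}r_0)\to-(d\tau+b\gamma\tau^{1-\alpha})$, which by the $\liminf$ bound means $d\tau+b\gamma\tau^{1-\alpha}\le c$, hence $\tau=t^*$ by uniqueness of the minimiser. The value $\tau=0$ cannot occur: for each fixed $\delta>0$ one eventually has $k^*(r_0)/(r_0\log^{-1/\alpha}r_0)\le\delta$, so \eqref{second-hopLDP} makes the $\limsup$ of $\log a_{k^*(r_0)}(x_0)/(r_0\log^{1-1/\alpha}r_0)$ at most $-b\gamma\delta^{1-\alpha}$; letting $\delta\downarrow0$, and using $\delta^{1-\alpha}\to\infty$ (as $\alpha>1$), this $\limsup$ equals $-\infty$, contradicting the displayed lower bound. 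The value $\tau=\infty$ is excluded in the same way via \eqref{third-hopLDP}, which would force that $\limsup$ to be at most $-dT$ for every $T>0$. Thus every subsequential limit of $\theta$ equals $t^*$, i.e.\ $\theta(r_0)\to t^*$, which is \eqref{koptlimit}.

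For \eqref{Aexprate}, fix for each large $r_0$ a maximiser $k^*(r_0)\le\kmax$, as granted by the hypothesis; by \eqref{koptlimit} and \eqref{first-hopLDP} (with $t=t^*$), $\log a_{k^*(r_0)}(x_0)/(r_0\log^{1-1/\alpha}r_0)\to-c$. The lower bound is then immediate from $1/A(x_0)\ge a_{k^*(r_0)}(x_0)$. For the matching upper bound, fix $T>c/d$ and split $\sum_{k=1}^{\kmax}$ at $K_0:=\lceil T r_0\log^{-1/\alpha}r_0\rceil$. On $1\le k\le K_0$ there are at most $K_0$ summands, each $\le\max_k a_k(x_0)=a_{k^*(r_0)}(x_0)$; since $\log K_0=O(\log r_0)=o(r_0\log^{1-1/\alpha}r_0)$, this gives $\limsup_{r_0}\log\!\big(\sum_{k\le K_0}a_k(x_0)\big)/(r_0\log^{1-1/\alpha}r_0)\le-c$. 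On $K_0<k\le\kmax$ we use an elementary tail bound: since $\ell\le1$ and $\ell(s)=s^{-\alpha}$ for $s\ge1$, one has $g(x,y)\ge c_1\max\{1,|x-y|^{\alpha}\}$ for all $x,y\in W$, where $c_1:=\inf_{y\in W}\int_W\ell(|z-y|)\,\d z$ is bounded below uniformly in $r$; substituting into \eqref{wishfulA} and integrating out the relay variables one after another gives $a_k(x_0)\le(\omega_d c_2 r^{-d})^{k-1}$ with $c_2:=\int_{\R^d}\e^{-\gamma c_1\max\{1,|v|^{\alpha}\}}\,\d v<\infty$. Hence for large $r$ (so $\omega_d c_2 r^{-d}\le\tfrac12$), $\sum_{k>K_0}a_k(x_0)\le2(\omega_d c_2 r^{-d})^{K_0}$; using $r\asymp r_0$ (so $\log r=\log r_0+O(1)$) and $K_0\sim T r_0\log^{-1/\alpha}r_0$ this is $\exp\!\big((-dT+o(1))\,r_0\log^{1-1/\alpha}r_0\big)$ with $-dT<-c$. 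Combining the two ranges gives $\limsup_{r_0}\log(1/A(x_0))/(r_0\log^{1-1/\alpha}r_0)\le-c$, which with the lower bound yields \eqref{Aexprate}.

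The only step that goes beyond routine bookkeeping is the tail bound for $K_0<k\le\kmax$: one must rule out that, when $\kmax$ is very large, the many small summands $a_k(x_0)$ accumulate to a mass comparable with $a_{k^*(r_0)}(x_0)$. This is exactly what the geometric estimate $a_k(x_0)\le(\omega_d c_2 r^{-d})^{k-1}$, whose ratio $\omega_d c_2 r^{-d}\to0$, accomplishes, uniformly in $\kmax$; everything else is the standard Laplace argument together with the uniqueness of $t^*$.
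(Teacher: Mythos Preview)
Your proof is correct and follows essentially the same Laplace-type strategy as the paper's: deduce \eqref{koptlimit} from Theorem~\ref{theorem-expectedhops} via the uniqueness of the minimiser $t^*$, then bound $1/A(x_0)$ below by the single maximal summand and above by splitting the sum into a ``head'' (controlled by the maximal term times the number of summands) and a geometric ``tail'' (controlled by the bound $a_k(x_0)\le(\mathrm{const}\cdot r^{-d})^{k-1}$, which is exactly the estimate from the proof of \eqref{third-hopLDP}).

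The only cosmetic differences are that the paper cuts the sum at $\lfloor r_0/2\rfloor$ rather than at your $K_0\sim Tr_0\log^{-1/\alpha}r_0$, and that the paper merely states that \eqref{koptlimit} ``follows immediately from Theorem~\ref{theorem-expectedhops}'' while you spell out the subsequence argument in full; both choices work and lead to the same conclusion. One small imprecision: your $c_1=\inf_{y\in W}\int_W\ell(|z-y|)\,\d z$ is not bounded below uniformly over \emph{all} $r>0$ (it vanishes as $r\downarrow0$), but it is bounded below uniformly for all large $r$, which is all that is needed in the limit under consideration.
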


If $\kmax$ is smaller than all the minimizers, then the asymptotics of $A(x_0)$ depend on those of $a_{\kmax}(x_0)$ rather than on $a_{k^*(r_0)}(r_0)$, and \eqref{Aexprate} has to be adapted accordingly. We note that \eqref{Aexprate} requires only a lower bound on $\kmax$, and in Corollary \ref{cor-A}, $\kmax$ could be equal to $+\infty$ for each $r_0$; see Section~\ref{sec-kmax=inftydiscussion} for a discussion about allowing arbitrary many hops in our model. \eqref{Aexprate} says that the asymptotic logarithmic behaviour of $1/A(x_0)$ on scale $r_0 \log^{1-1/\alpha} r_0$ coincides with the one of the single maximal summand $a_{k^*(r_0)}(r_0)$. Formulated in terms of the marginal distribution $\pi_0 T_{x_0}$ of $T_{x_0}$ on the length $k$ of the path from $x_0$ to $o$, since the behaviour of the Lebesgue measure restricted to $\overline{B_r(o)}$ is subexponential in $r_0$ in the large-distance limit that we are considering, we have that
$$
\pi_0 T_{x_0}\Big([t^*-\eps,t^*+\eps]^{\rm c}\frac{r_0}{\log^{1/\alpha}(r_0)}\Big)
$$
tends to zero exponentially fast on the scale $r_0 \log^{1-1/\alpha} r_0$ for all $\eps>0$. In Section~\ref{sec-largespacediscussion} we give an explanation of how these scales arise. 

In the proof of the lower bound of \eqref{koptlimit}, considering a uniform hop length distribution was sufficient, i.e., $t^* r_0/\log^{1/\alpha} r_0$ hops along the same straight line directed from $x_0$ to $o$ with length $r_0/k(r_0) \sim \frac 1{t^*} \log^{1/\alpha} r_0$ each.  We now show, again in terms of a large deviations estimate on the scale $r_0 \log^{1-1/\alpha} r_0$, that macroscopic deviations from this optimal hop length on the scale $\log^{1/\alpha} r_0$ have extremely small probability. For a finite set $A$, we write $\# A$ for the cardinality of $A$. 

\begin{prop} \label{cor-straighthops}
For $\varepsilon,\delta>0$ and $k\in\N$, let
\begin{equation}\label{depsilondelta}
\begin{aligned}
D_{\varepsilon,\delta}(k,x_0) &= \Big\{ (x_1,\ldots,x_{k-1}) \in {B_r(o)}^{k-1} \colon \exists I \subseteq [k-1] \colon \# I \geq \delta k,\\
& \qquad \frac{1}{\# I} \sum_{l \in I} \frac{|x_{l-1}-x_{l}|-\big||x_{l-1}|-|x_{l}|\big|}{\log^{1/\alpha} r_0} > \varepsilon \Big\},\qquad x_{k}=o.
\end{aligned}
\end{equation}
Then, in the limit $r_0\to\infty$ with $r>r_0=|x_0| \asymp r$, for $k(r_0) \sim t^* r_0/\log^{1/\alpha} r_0$,
\begin{equation}\label{r0expdecay}
\limsup \frac1{r_0 \log^{1-1/\alpha} r_0}\log T_{x_0}\big(k(r_0),D_{\varepsilon,\delta}(k(r_0),x_0)\big)< 0.
\end{equation}
\end{prop}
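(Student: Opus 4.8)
\emph{Proof plan.} Abbreviate $k=k(r_0)$ and put $\chi^*:=d t^*+b\gamma(t^*)^{1-\alpha}=\min_{t>0}(dt+b\gamma t^{1-\alpha})$; see \eqref{easyminimum}. By \eqref{Tident}, for any Borel $B\subseteq B_r(o)^{k-1}$ one has $T_{x_0}(k,B)=A(x_0)\,a_k^B(x_0)$, where $a_k^B(x_0)$ is the integral in \eqref{wishfulA} with the domain $B_r(o)^{k-1}$ replaced by $B$ (so $a_k^{B_r(o)^{k-1}}(x_0)=a_k(x_0)$). Assuming, as in Corollary~\ref{cor-A}, that $\kmax\ge k^*(r_0)$ for all large $r_0$ (in particular $\kmax\ge k$), estimate \eqref{Aexprate} gives $\frac{1}{r_0\log^{1-1/\alpha}r_0}\log A(x_0)\to\chi^*$; hence it suffices to prove that $\limsup_{r_0\to\infty}\frac{1}{r_0\log^{1-1/\alpha}r_0}\log a_k^{D_{\varepsilon,\delta}(k,x_0)}(x_0)<-\chi^*$.

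\emph{Step 1 (the constraint inflates total hop length and energy).} Fix $(x_1,\dots,x_{k-1})\in D_{\varepsilon,\delta}(k,x_0)$ with witnessing index set $I$, and set $x_k=o$. Writing each hop as $|x_{l-1}-x_l|=\big(|x_{l-1}-x_l|-\big||x_{l-1}|-|x_l|\big|\big)+\big||x_{l-1}|-|x_l|\big|$, summing over $l=1,\dots,k$, keeping in the first (termwise nonnegative) sum only the indices $l\in I$, and using $\sum_{l=1}^k\big||x_{l-1}|-|x_l|\big|\ge\big|\sum_{l=1}^k(|x_{l-1}|-|x_l|)\big|=|x_0|=r_0$, one gets $\sum_{l=1}^k|x_{l-1}-x_l|\ge r_0+\varepsilon(\#I)\log^{1/\alpha}r_0\ge r_0+\varepsilon\delta k\log^{1/\alpha}r_0$. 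Since $k\log^{1/\alpha}r_0/r_0\to t^*$, this shows $D_{\varepsilon,\delta}(k,x_0)\subseteq B_k:=\{(x_1,\dots,x_{k-1})\in B_r(o)^{k-1}:\sum_{l=1}^k|x_{l-1}-x_l|\ge(1+c)r_0\}$ with $c:=\tfrac12\varepsilon\delta t^*>0$, for all large $r_0$. On $B_k$, using $g(x_{l-1},x_l)\ge(b-o(1))\max\{1,|x_{l-1}-x_l|^\alpha\}$ (valid for relays not too close to $\partial W$; see the closing remark) together with the convexity of $t\mapsto t^\alpha$ and Jensen's inequality,
\[
\sum_{l=1}^k g(x_{l-1},x_l)\ \ge\ (b-o(1))\,k^{1-\alpha}\Big(\sum_{l=1}^k|x_{l-1}-x_l|\Big)^{\alpha}\ \ge\ (b-o(1))\,(1+c)^\alpha\,k^{1-\alpha}r_0^\alpha .
\]

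\emph{Step 2 (the sharp large-deviations bound --- the crux).} It is \emph{not} enough to insert the last display into $a_k^{B_k}(x_0)$ and bound the residual volume by $\mu(W)^{k-1}$: the prefactor $\mu(W)^{-(k-1)}$ then cancels, and the surviving rate $b\gamma(1+c)^\alpha(t^*)^{1-\alpha}$ exceeds $\chi^*=d t^*+b\gamma(t^*)^{1-\alpha}$ only when $\varepsilon\delta$ is large. One must retain the entropic term $-d t^*$ carried by the $\mu(W)^{-(k-1)}$-normalisation, arguing as in the proof of Theorem~\ref{theorem-expectedhops}: change to increments $u_l=x_{l-1}-x_l$ (with $u_k=x_0-\sum_{l<k}u_l$ determined; Jacobian $1$), drop the constraints $x_l\in B_r(o)$, bound $\e^{-\gamma\sum_l g}\le\prod_{l=1}^k\psi(u_l)$ with $\psi(u):=\e^{-\gamma(b-o(1))\max\{1,|u|^\alpha\}}$, and Chernoff-tilt the length constraint: for $\beta>0$, using $\1\{\sum_l|u_l|\ge(1+c)r_0\}\le\e^{\beta(\sum_l|u_l|-(1+c)r_0)}$,
\[
a_k^{B_k}(x_0)\ \le\ \mu(W)^{-(k-1)}\,\e^{-\beta(1+c)r_0}\,\widetilde\psi_\beta^{*k}(x_0),\qquad \widetilde\psi_\beta(u):=\psi(u)\,\e^{\beta|u|},
\]
where $\widetilde\psi_\beta^{*k}$ denotes the $k$-fold convolution and $\widetilde\psi_\beta$ is bounded and integrable because $\alpha>d\ge1$. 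Running the Laplace/large-deviations analysis of $\widetilde\psi_\beta^{*k}(x_0)$ exactly as in Theorem~\ref{theorem-expectedhops} (the case $\beta=0$), the optimal tilt being of order $\log^{(\alpha-1)/\alpha}r_0$, and inserting $k\sim t^*r_0\log^{-1/\alpha}r_0$ and $\log\mu(W)\sim d\log r_0$, one obtains
\[
\limsup_{r_0\to\infty}\frac{1}{r_0\log^{1-1/\alpha}r_0}\log a_k^{B_k}(x_0)\ \le\ -\big(d t^*+b\gamma(1+c)^\alpha(t^*)^{1-\alpha}\big);
\]
the constant multiplying $(1+c)^\alpha$ is $b\gamma(t^*)^{1-\alpha}$ because at $c=0$ (where $B_k=B_r(o)^{k-1}$) this estimate must reproduce the exact rate $-\chi^*$ of $a_k(x_0)$ given by \eqref{first-hopLDP}.

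\emph{Conclusion and remarks.} Since $D_{\varepsilon,\delta}(k,x_0)\subseteq B_k$, combining the reduction with Step 2 gives
\[
\limsup_{r_0\to\infty}\frac{1}{r_0\log^{1-1/\alpha}r_0}\log T_{x_0}\big(k,D_{\varepsilon,\delta}(k,x_0)\big)\ \le\ \chi^*-\big(d t^*+b\gamma(1+c)^\alpha(t^*)^{1-\alpha}\big)\ =\ -\,b\gamma\big((1+c)^\alpha-1\big)(t^*)^{1-\alpha}\ <\ 0,
\]
as $c>0$ and $\alpha>1$. The main obstacle is Step 2: the crude ``energy $\ge$ minimum, volume $\le$ full'' estimate destroys exactly the $-d t^*$ term and then only settles the case of large $\varepsilon\delta$, so one genuinely has to rerun the large-deviations estimate underlying Theorem~\ref{theorem-expectedhops} with the extra length tilt $\beta$, and to verify that the constraint $\sum_l|x_{l-1}-x_l|\ge(1+c)r_0$ leaves the entropic (tube-volume) contribution unchanged up to $\e^{o(r_0\log^{1-1/\alpha}r_0)}$, which it does, both the constrained and the unconstrained near-minimisers having hop lengths of order $\log^{1/\alpha}r_0$. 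A routine secondary point, used in Step 1, is the passage $g(x,y)\ge(b-o(1))\max\{1,|x-y|^\alpha\}$: this rests on $\int_W\ell(|z-y|)\,\d z=b(1-o(1))$, which holds only when $y$ stays away from $\partial W$; one checks that relays close to $\partial W$ are too few along trajectories contributing non-negligibly to the integral to affect the asymptotics (reaching $\partial W$ forces radial detours, which raise the energy by convexity), and in the generic regime $r-r_0\asymp r_0$ the point is vacuous.
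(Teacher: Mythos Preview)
Your Step~1 is clean and correct: the constraint in $D_{\varepsilon,\delta}$ forces the total hop length $\sum_{l=1}^k|x_{l-1}-x_l|$ to exceed $(1+c)r_0$ with $c=\tfrac12\varepsilon\delta t^*>0$, and this reduction to a single scalar constraint is more transparent than the paper's direct attack on the energy. The Chernoff-tilt idea in Step~2 is also sound in principle, and the Legendre calculation (optimal $\beta\asymp\log^{(\alpha-1)/\alpha}r_0$) does yield the rate $-(dt^*+b\gamma(1+c)^\alpha(t^*)^{1-\alpha})$, \emph{provided} the pointwise bound $g(x_{l-1},x_l)\ge(b-o(1))/\ell(|x_{l-1}-x_l|)$ holds for every hop.

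That proviso is the gap, and it is not a ``routine secondary point''. The numerator of $g(x,y)$ is $\int_W\ell(|z-y|)\,\d z$, which is close to $b$ only when $\dist(y,\partial W)\to\infty$; near $\partial W$ it drops towards $b_0\approx b/2$. Your claim that ``in the generic regime $r-r_0\asymp r_0$ the point is vacuous'' is false: it is the \emph{relays} $x_1,\dots,x_{k-1}$, not the transmitter $x_0$, that must stay away from $\partial W$, and the integral in $a_k^{B_k}$ runs over all of $B_r(o)^{k-1}$. If you plug the uniformly valid constant $b_0$ into $\psi$ instead of $b$, your Chernoff bound yields only the rate $-(dt^*+b_0\gamma(1+c)^\alpha(t^*)^{1-\alpha})$, which beats $-\chi^*$ only when $(1+c)^\alpha>b/b_0$, i.e.\ only for large $\varepsilon\delta$---precisely the regime you yourself flag as the easy one.

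The paper's proof is different in structure and devotes most of its length to exactly this boundary issue. Rather than tilting, it shows directly that $\sum_l g(x_{l-1},x_l)\ge(b(t^*)^{-\alpha}+\varepsilon_1)k\log r_0$ uniformly on $D_{\varepsilon,\delta}$, via repeated Jensen inequalities on $|\cdot|^\alpha$ combined with a case analysis on the index $K(r_0,r)$ of the last relay outside $B_{s(r)}(o)$ and on the fraction $\tau(r_0)=K(r_0,r)/k(r_0)$: either $\tau(r_0)$ is bounded away from zero, so the $(1-\tau)^{1-\alpha}$ factor in \eqref{lesshops} already gives strict improvement, or $\tau(r_0)\to 0$, in which case one must further split according to where the excess length sits (before or after $K(r_0,r)$) and exploit that the pre-$K$ hops then carry enormous energy even with the weaker constant $b_0$. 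Your Chernoff route could in principle be repaired via the $\varepsilon$-splitting trick of \eqref{2bounds}---tilt with $b_0$ on an $\varepsilon'$-fraction of the energy to control the volume, and use a length-aware analogue of Lemma~\ref{lemma-gapproxbound} on the remaining $(1-\varepsilon')$-fraction---but proving that length-aware analogue is essentially the paper's boundary case analysis, so the work does not disappear.
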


In words, the probability that there are $\asymp k(r_0)$ hops $x_l-x_{l-1}$, $l\in I$ for some index set $I$, in the trajectory of relays such that their average hop length $ \frac1{\# I} \sum_{i=1}^{\# I} |x_{l_i}-x_{l_i-1}|$ deviates from  the optimal hop length $\frac 1{t^*} \log^{1/\alpha} r_0 \approx r_0/k^*(r_0)$ on that scale, decays exponentially fast to zero on the scale $r_0 \log^{1-1/\alpha} r_0$. (In the denominator of the summands in \eqref{depsilondelta}, we have removed the factor $\smfrac{1}{t^*}$ in order to simplify notation.) 

We presented the results of this section for the path-loss functions $\ell$ of the form $\ell(r)=\min \{ 1, r^{-\alpha} \}$, $\alpha>d$, which makes the notation in the proofs less heavy. However, these assertions require only two properties of $\ell$: the integrability of $\ell(|\cdot|)$ over $\R^d$ and the convexity of $1/\ell(|\cdot|)$, see Section~\ref{sec-pathlossdiscussion}.

The proofs of Theorem \ref{theorem-expectedhops}, Corollary \ref{cor-A} and Proposition \ref{cor-straighthops} are carried out in Sections \ref{sec-Aproofmain}, \ref{sec-ProofCor} and \ref{sec-straighthopproof}, respectively. A discussion about these results and their proofs can be found in Section \ref{sec-largespacediscussion}.


Certainly, our results of this section hold for much more general communication areas $W$, not only for balls. Essential for our approach is only that a -- in every space dimension diverging -- neighbourhood of the straight line between $x_0$ and $o$ is contained in $W$ in the limit considered. The parameter $d$ appearing in the rate function goes back to our assumption that the volume of $W$ grows like the $d$-th power of $r$; however, other powers than $d$ in $[1,d]$ are also possible by putting other geometric assumptions on $W$.

\subsection{Proof of the results of Section \ref{sec-kopt}} \label{sec-Aproof} 
All the three results of Section~\ref{sec-kopt} tell about the limit $r_0\to\infty$ with $r>r_0 \asymp r$, where $x_0 \in W=\overline{B_{r}(o)}$ has Euclidean norm $|x_0|=r_0$. Throughout this section, we will use the notation $\lim_{r,r_0}$ for this limit and refer to it as ``our limit''.

\subsubsection{Proof of Theorem \ref{theorem-expectedhops}} \label{sec-Aproofmain}

Our strategy for proving the three assertions \eqref{first-hopLDP}, \eqref{second-hopLDP} and \eqref{third-hopLDP} is the following. First we verify the lower bound in \eqref{first-hopLDP}. Then we prove \eqref{second-hopLDP} and afterwards \eqref{third-hopLDP}, and we combine these two proofs in order to conclude the upper bound in \eqref{first-hopLDP}.

\noindent \emph{Proof of~\eqref{first-hopLDP}, lower bound.} 
Let us first consider $k(r_0)$ satisfying just $k(r_0)=o( r_0)$. We obtain a lower bound for $a_{k(r_0)}(x_0)$ defined in \eqref{wishfulA} by restricting the $x_l$-integral to the ball with radius one around $\frac{k(r_0)-l}{k(r_0)}x_0$ for $l=1,\ldots,k(r_0)-1$. Then, eventually, $1 \leq 
|x_{l-1}-x_l|\leq|x_0|/k(r_0) + 2$ for $l=1,\ldots,k(r_0)$. Note that $g(x_{l-1},x_l)\leq b/\ell(|x_{l-1}-x_l|)=b |x_{l-1}-x_l|^{\alpha}$, where we recall that $b=\int_{\R^d} \d y\, \ell(|y|)$. Hence, for any $\eps\in(0,1)$, eventually,  
\[ 
g(x_{l-1},x_l)\leq b|x_{l-1}-x_l|^\alpha\leq (1+\eps)br_0^\alpha /k(r_0)^\alpha,\qquad l=1,\ldots,k(r_0), 
\] 
where in the first step we used that $\smfrac{r_0}{k(r_0)}=\smfrac{|x_0|}{k(r_0)}$ tends to infinity in our limit. This gives
$$
a_{k(r_0)} (x_0) \geq  (\omega_d r^d)^{-k(r_0)+1}  \e^{-\gamma b k(r_0) (1+\varepsilon)(r_0/k(r_0))^\alpha} \geq \e^{-(d+\eps)k(r_0)\log r_0 -\gamma b k(r_0) (1+\varepsilon)(r_0/k(r_0))^\alpha},
$$
where the second inequality holds eventually, since $r_0 \asymp r$. Now an elementary optimization on $k(r_0)$ shows that $k(r_0)\asymp r_0\log^{-1/\alpha}r_0$ is the relevant scale. Then, in the particular case that $k(r_0)\sim t r_0\log^{-1/\alpha}r_0$ for some $t\in(0,\infty)$, carrying out the limit and making $\eps\downarrow0$ afterwards, we have
\[ \liminf_{r, r_0} \frac{1}{r_0 \log^{1-1/\alpha} r_0} \log a_{k(r_0)} (x_0 ) \geq -\Big( dt+\gamma b t^{1-\alpha} \Big), \]
which is the lower bound in \eqref{first-hopLDP}. \ProofTeilEnde

\noindent\emph{Proof of~\eqref{second-hopLDP}}. This proof uses that $1/\ell$ is convex and that the numerator $\int_W \ell(|y-x_l|)\,\d y$ can be well approximated by $b$ for sufficiently many $l$. These arguments lead to the following lemma. 
\begin{lemma}\label{lemma-gapproxbound}
Let $\eps>0$. If $k(r_0)\leq \frac 12 r_0$ for all $r_0$ sufficiently large, then eventually in our limit,
\[\numberthis\label{gapproxbound}
\sum_{l=1}^{k(r_0)} g(x_{l-1},x_l) \geq (1-\varepsilon)^\alpha (b-\eps) k(r_0)^{1-\alpha} r_0^\alpha \]
holds simultaneously for all $x_0,\ldots,x_{k(r_0)-1} \in B_r(o)$ with $|x_0|=r_0$ and $x_{k(r_0)}=o$.
\end{lemma}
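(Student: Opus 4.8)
\medskip\noindent\emph{Plan of proof.} The plan is to write $g(x,y)=\Theta(y)/\ell(|x-y|)$ with $\Theta(y)=\int_W\ell(|z-y|)\,\d z$ the total received power at $y$ (recall that $\mu$ is Lebesgue measure on $W=\overline{B_r(o)}$), to use that $s\mapsto 1/\ell(s)=\max\{1,s^\alpha\}$ is convex and nondecreasing on $[0,\infty)$ with $1/\ell(s)\ge s^\alpha$, and then to proceed in three steps: (a) bound $\Theta(x_l)$ below by $b-\varepsilon$ at every index $l$ for which $x_l$ lies sufficiently far inside $W$; (b) show that the hops ending at such interior points already account for essentially all of the radial displacement $r_0$ between $x_0$ and $o$; (c) feed (a) and (b) into Jensen's inequality for $1/\ell$. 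Only the finiteness of $b=\int_{\R^d}\ell(|z|)\,\d z$ and the convexity of $1/\ell$ will enter, which is why the same argument covers the more general path-loss functions mentioned in the text.

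For step (a) I would, given $\varepsilon>0$, fix a constant $R<\infty$ with $\int_{\R^d\setminus B_R(o)}\ell(|z|)\,\d z<\varepsilon$ (possible since $b<\infty$), and call $y\in W$ \emph{interior} if $|y|\le r-R$. For interior $y$ we have $B_R(y)\subseteq W$, hence
\[
\Theta(y)\ \ge\ \int_{B_R(y)}\ell(|z-y|)\,\d z\ =\ \int_{B_R(o)}\ell(|z|)\,\d z\ >\ b-\varepsilon .
\]
Since $r\to\infty$ in our limit, eventually $R\le r$, so $x_{k(r_0)}=o$ is interior. I abbreviate $k=k(r_0)$, set $\rho_l=|x_l|$ for $0\le l\le k$, and let $D=\{l\in[k]\colon x_l\text{ interior}\}$, so that $k\in D$ and $\# D\ge 1$.

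Step (b) is the heart of the matter. I would show that, as soon as $r>r_0$ (which is assumed) and $R\le r$,
\[
\sum_{l\in D}|x_{l-1}-x_l|\ \ge\ r_0-R
\]
for \emph{every} admissible $(x_0,\dots,x_{k-1})$. Since $|x_{l-1}-x_l|\ge\rho_{l-1}-\rho_l$ and $\sum_{l=1}^k(\rho_{l-1}-\rho_l)=\rho_0-\rho_k=r_0$, it suffices to prove $\sum_{l\in[k]\setminus D}(\rho_{l-1}-\rho_l)\le R$. Splitting $[k]\setminus D$ into maximal blocks $\{a,\dots,b\}$ of consecutive integers, each contributes the telescoping amount $\rho_{a-1}-\rho_b$; here $\rho_b>r-R$ because $x_b$ is not interior, and by maximality either $a\ge2$, in which case $x_{a-1}$ is interior so $\rho_{a-1}\le r-R$ and the block contributes a negative amount, or $a=1$, in which case $\rho_{a-1}=\rho_0=r_0$ and the block contributes at most $r_0-(r-R)=R-(r-r_0)<R$. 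As only one block can have $a=1$, the total is $<R$, giving the claim. This is the only place where $r>r_0$ is used, and it is essential: it prevents a trajectory from accumulating radial progress ``for free'' by first hopping outward to the boundary sphere. I expect this step to be the main (in fact the only) obstacle; everything else is bookkeeping plus a single Jensen estimate.

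Finally, for step (c) I would drop the non-interior indices and use step (a), then Jensen for the convex map $1/\ell$ over the $\# D$ hops of $D$, then $1/\ell(s)\ge s^\alpha$ and step (b), then $1-\alpha<0$ together with $1\le\# D\le k$:
\[
\sum_{l=1}^{k}g(x_{l-1},x_l)\ \ge\ (b-\varepsilon)\sum_{l\in D}\frac1{\ell(|x_{l-1}-x_l|)}\ \ge\ (b-\varepsilon)\,\#D\Big(\tfrac1{\#D}\textstyle\sum_{l\in D}|x_{l-1}-x_l|\Big)^{\!\alpha}\ \ge\ (b-\varepsilon)\,k^{1-\alpha}(r_0-R)^{\alpha}.
\]
(The hypothesis $k(r_0)\le\tfrac12 r_0$ is convenient here, as it makes the argument of $1/\ell$ above at least $(r_0-R)/k\ge1$ eventually, so that $1/\ell$ there equals its $\alpha$-th power; only $1/\ell(s)\ge s^\alpha$ is actually needed.) Since $R$ is a fixed constant and $r_0\to\infty$, eventually $(r_0-R)^{\alpha}\ge(1-\varepsilon)^{\alpha}r_0^{\alpha}$, and combining this with the previous display gives $\sum_{l=1}^{k(r_0)}g(x_{l-1},x_l)\ge(1-\varepsilon)^{\alpha}(b-\varepsilon)\,k(r_0)^{1-\alpha}r_0^{\alpha}$ for all sufficiently large $r_0$, uniformly over all admissible configurations, as claimed.
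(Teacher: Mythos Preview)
Your proof is correct and follows essentially the same strategy as the paper's: bound the numerator $\Theta(x_l)$ below by $b-\varepsilon$ for points deep inside $W$, control the radial displacement carried by those hops, and then apply Jensen's inequality to the convex function $1/\ell$. The only differences are cosmetic --- the paper works with a shell of width $r-s(r)=o(r)$ and keeps only the hops after the \emph{last} non-interior index $K(r_0,r)$, whereas you use a fixed shell width $R$ and keep \emph{all} interior indices via the block-telescoping in step~(b) --- but the core argument is identical.
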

\begin{proof} Let us now define an auxiliary function $s\colon (0,\infty) \to(0,\infty)$ such that $ r-s(r)\to\infty$ and $0<r-s(r)=o(r)$ in our limit. Fix $\eps\in(0,\frac 14)$. The idea is to pick $r$ sufficiently large so that 
\[\numberthis\label{uniformity}
\int_{\overline{B_r(o)}} \ell(|y-x|) \,\d y \geq b-\eps,\qquad \forall x\in \overline{B_{s(r)}(o)}.
\]
Let us assume that we are given a trajectory $(x_0,x_1,\ldots,x_{k(r_0)-1},x_{k(r_0)})$ with $k(r_0) \leq \frac 12 r_0$, $|x_0|=r_0$ and $x_{k(r_0)}=o$. Let us define the index of the last hop outside $B_{s(r)}(o)$:
\[ 
K(r_0,r) = \max \lbrace l \in \lbrace 0,1,\ldots,k(r_0)-1 \rbrace \colon |x_l| \geq s(r) \rbrace,
\numberthis\label{k0def} 
\]
which we want to understand as $K(r_0,r) =0$ if there is no such hop. Let $r_0>0$ be sufficiently large so that $s(r)>(1-\varepsilon)  r$ and \eqref{uniformity} holds. Then we have
\begin{align}
\sum_{l=1}^{k(r_0)} g(x_{l-1},x_l) &\geq \sum_{l=K(r_0,r)+1}^{k(r_0)} g(x_{l-1},x_l)  \geq \sum_{l=K(r_0,r)+1}^{k(r_0)} \frac{b-\varepsilon}{\ell(|x_{l-1}-x_l|)} \label{buniform} \\ 
& \geq \frac{(b-\varepsilon) (k(r_0)-K(r_0,r)) }{\ell \Big( \frac{1}{k(r_0)-K(r_0,r)} \sum_{l=K(r_0,r)+1}^{k(r_0)} |x_{l-1}-x_l| \Big)} \label{Jensenagain} \\ 
& \geq \frac{(b-\varepsilon) (k(r_0)-K(r_0,r)) }{\ell \Big( \frac{1}{k(r_0)-K(r_0,r)} (1-\varepsilon) r_0 \Big)} \label{distancelarge} \\ 
& \geq (1-\varepsilon)^{\alpha} (b-\varepsilon) (k(r_0)-K(r_0,r))^{1-\alpha} r_0^\alpha \geq (1-\varepsilon)^{\alpha} (b-\varepsilon) (k(r_0))^{1-\alpha} r_0^\alpha. \label{lastline}
\end{align}
In \eqref{buniform} we used the fact that $x_{K(r_0,r)}, \ldots, x_{k(r_0)-1}, x_{k(r_0)}$ lie in  $B_{s(r)}(o)$ and therefore \eqref{uniformity} can be applied for the numerator of each $g(x_{l-1},x_l)$ with $l>K(r_0,r)$. Next, \eqref{Jensenagain} is an application of Jensen's inequality for $1/\ell(|\cdot|)$, and \eqref{distancelarge} uses the following fact. Either $K(r_0,r)=0$, in which case 
\[ 
\sum_{l=K(r_0,r)+1}^{k(r_0)} |x_{l-1}-x_l|\geq \sum_{l=K(r_0,r)+1}^{k(r_0)} (|x_{l-1}|-|x_l| )\geq |x_0|=r_0 \geq  2 k(r_0) \geq  k(r_0)-K(r_0,r), 
\numberthis\label{nothingaway}
\]
or $K(r_0,r)>0$, and thus
\[ 
\begin{aligned}
\sum_{l=K(r_0,r)+1}^{k(r_0)} |x_{l-1}-x_l|&\geq \sum_{l=K(r_0,r)+1}^{k(r_0)} (|x_{l-1}|-|x_l|) \geq s(r) \\
&\geq (1-\varepsilon) r > (1-\varepsilon) r_0 \geq k(r_0) \geq k(r_0) - K(r_0,r). 
\end{aligned}
\numberthis\label{somethingaway}
\]
In both cases, the argument in $\ell(\cdot)$ is $\geq 1$, and we can write the term in terms of the $\alpha$-norm and the first step in \eqref{lastline} also follows. Hence, we have derived \eqref{gapproxbound}. \end{proof} 
By Lemma~\ref{lemma-gapproxbound}, for any $\eps>0$, we have eventually in our limit, under the assumptions of the lemma
\begin{align*} 
& \int_{\overline{B_r(o)}^{k(r_0)-1}}\Big( \prod_{l=1}^{{k(r_0)}-1} \frac{\d x_l}{\Leb(\overline{B_r(o)})}\Big)\, \e^{-\gamma \sum_{l=1}^{k(r_0)} g(x_{l-1},x_l)}  \\ \leq & \int_{\overline{B_r(o)}^{k(r_0)-1}}\Big( \prod_{l=1}^{{k(r_0)}-1} \frac{\d x_l}{\Leb(\overline{B_r(o)})}\Big)\, \e^{-\gamma (1-\eps)^{\alpha} (b-\eps) {k(r_0)}^{1-\alpha} r_0^\alpha} 
= \e^{-\gamma (1-\eps)^{\alpha} (b-\eps) {k(r_0)}^{1-\alpha} r_0^\alpha}  .
\end{align*}
Now, let $t>0$ and $r_0 \mapsto k(r_0)$ such that $k(r_0) \leq (t+o(1))r_0/\log^{1/\alpha} r_0$ (in particular $k(r_0) \leq \frac 12 r_0$ eventually). Then,
\begin{equation}\label{finalrate} 
\begin{aligned} 
& \limsup_{r, r_0} \frac{1}{r_0 \log^{1-1/\alpha} r_0} \log a_{k(r_0)}(x_0) 
\leq \limsup_{r, r_0} \frac{-(b-\eps)(1-\eps)^{\alpha} \gamma k(r_0)^{1-\alpha} r_0^\alpha}{r_0 \log^{1-1/\alpha} r_0}\\
&= \limsup_{r,r_0} - (b-\eps)(1-\eps)^{\alpha} \gamma \Big( \frac{k(r_0) \log^{1/\alpha}(r_0)}{r_0}\Big)^{1-\alpha} \leq - (b-\eps)(1-\eps)^{\alpha} \gamma t^{1-\alpha}
\end{aligned} 
\end{equation}
for all $\eps>0$, and thus
\[ \limsup_{r, r_0} \frac{1}{r_0 \log^{1-1/\alpha} r_0} \log a_{k(r_0)}(x_0)  \leq - b \gamma t^{1-\alpha}, \]
which is \eqref{second-hopLDP}.
\ProofTeilEnde

\noindent\emph{Proof of~\eqref{third-hopLDP}}. Note that for any $x \in \overline{B_r(o)}$, we have
\[ \int_{B_r(o)} \ell(|y-r e_1|) \, \d y\leq \int_{B_r(o)} \ell(|y-x|) \, \d y, \]
where $e_1=(1,0,\ldots,0)$ is the first unit vector of $\R^d$. 

Let us introduce the quantity $b_0 = \lim_{r \to \infty} \int_{B_r(o)} \ell(|y-r e_1|)\, \d y=\sup_{r\in(0,\infty)}\int_{B_r(o)} \ell(|y-r e_1|)\, \d y\in(0,b)$.
Now, for any $k\colon (0,\infty) \to \mathbb N$, in our limit,
\begin{equation}\label{fullintegral}
\begin{aligned}
   \Leb(B_r(o))^{-(k(r_0)-1)}  a_{k(r_0)}(x_0)&=  \int_{(B_r(o))^{k(r_0)-1}} \Big(\prod_{l=1}^{k(r_0)-1}\d x_l\Big)\,\e^{-\gamma \sum_{l=1}^{k(r_0)} \frac{\int_{B_{r}(o)} \ell(|y-x_l|) \,\d y}{\ell(|x_{l-1}-x_l|)}} \\
    &\leq \int_{(\R^d)^{k(r_0)-1}}\prod_{l=1}^{k(r_0)-1}\Big(\d x_l\,\e^{-\gamma \frac{b_0-o(1)}{\ell(|x_{l-1}-x_l|)}}\Big) \\
     & \leq   \Big(\int_{\R^d} \e^{-\gamma \frac{b_0-o(1)}{\ell(|y|)}} \,\d y \Big) ^{k(r_0)-1} =O(1)^{k(r_0)} = \exp\big(o(k(r_0) \log r_0)\big),
\end{aligned}
\end{equation}
where the first step in the last line follows from an elementary substitution and a reversion of the order of integration. Now, recall that in our limit $r \asymp r_0$. If $t>0$ and $k(r_0) \geq (t + o(1)) r_0/\log^{1/\alpha} r_0$, we have that
\[  \Leb(B_r(o))^{-(k(r_0)-1)} = \exp(-(dt+o(1)) k(r_0) \log r_0)) \leq \exp\big( -(dt+o(1)) r_0 \log^{1-1/\alpha} r_0 \big).  \] 
This implies \eqref{third-hopLDP}. \ProofTeilEnde

\noindent\emph{Proof of \eqref{first-hopLDP}, upper bound.} We combine our arguments from the proofs of the upper bounds in \eqref{second-hopLDP} and \eqref{third-hopLDP} in order to obtain the upper bound in \eqref{first-hopLDP}. Indeed, for $t>0$ and $k(r_0) \sim t r_0 / \log^{1/\alpha} r_0$ and $\varepsilon>0$, let us write $g(x_{l-1},x_l)=\varepsilon g(x_{l-1},x_l)+(1-\varepsilon) g(x_{l-1},x_l)$, estimate the first term like in \eqref{fullintegral} and the second term with the help of \eqref{gapproxbound}. This gives eventually
\begin{align*}  a_{k(r_0)}(x_0) & 
\leq  \int_{W^{{k(r_0)}-1}}\Big( \prod_{l=1}^{{k(r_0)}-1} \frac{\d x_l}{\Leb(B_r(o))}\Big)\, \e^{-\varepsilon \gamma \sum_{l=1}^{k(r_0)} \frac{b_0-o(1)}{\ell(|x_{l-1}-x_l|)}   } \e^{- (1- \varepsilon)(1-\varepsilon)^{\alpha} (b-\varepsilon)\gamma t^{1-\alpha} r_0 \log^{1-1/\alpha} r_0 }  \\
 & \leq \exp \Big( -(dt-\varepsilon) r_0 \log^{1-1/\alpha} r_0 -(1-\varepsilon)^{\alpha+1} \gamma (b-\varepsilon) t^{1-\alpha} r_0 \log^{1-1/\alpha} r_0 \Big) . \numberthis\label{2bounds} \end{align*}
Carrying out our limit and letting $\varepsilon \downarrow 0$ implies the upper bound in \eqref{first-hopLDP}. This finishes the proof of Theorem \ref{theorem-expectedhops}. \ProofEnde

\subsubsection{Proof of Corollary \ref{cor-A}}\label{sec-ProofCor}

The identity \eqref{koptlimit} follows immediately from Theorem~\ref{theorem-expectedhops}. As for \eqref{Aexprate}, let $k^*(r_0)$ be the smallest maximizer of $k \mapsto a_k(x_0)$, and let $r_0 \mapsto \kmax(r_0)$ satisfy the assumption of the corollary, i.e., $\kmax(r_0) \geq k^*(r_0)$. The lower bound easily follows from \eqref{first-hopLDP} by estimating $1/A(x_0)$ from below by the single summand $a_{k^*(r_0)}(x_0)$ and using \eqref{koptlimit}. As for an upper bound, we first write
\begin{align*}
    & \limsup_{r,r_0} \frac{1}{r_0 \log^{1-1/\alpha} r_0} \log \frac{1}{A(x_0)} \leq \limsup_{r,r_0} \frac{1}{r_0 \log^{1-1/\alpha} r_0} \log \Big( \sum_{k=1}^{\lfloor \frac 12 r_0 \rfloor} a_{k}(x_0) + \sum_{k=\lfloor \frac 12 r_0 \rfloor +1}^{\infty} a_{k}(x_0) \Big) \\ &= \max \Big\{ \limsup_{r,r_0} \frac{1}{\frac 12 r_0 \log^{1-1/\alpha} r_0} \log \Big( \sum_{k=1}^{\lfloor \frac 12 r_0 \rfloor} a_{k}(x_0) \Big) , \, \limsup_{r,r_0} \frac{1}{r_0 \log^{1-1/\alpha} r_0} \log \Big( \sum_{k=\lfloor \frac 12 r_0 \rfloor+1}^{\infty} a_{k}(x_0) \Big) \Big\}.
\end{align*}
Then the proof of \eqref{third-hopLDP} implies that there exists a constant $D>0$ such that we have
\[  \sum_{k=\lfloor \frac 12 r_0 \rfloor+1}^{\infty}  a_{k}(x_0)\leq \sum_{k=\lfloor \frac 12 r_0 \rfloor+1}^{\infty} (D r_0^d)^{-k} = \frac{(D r_0^d)^{-\lfloor \frac 12 r_0 \rfloor+1}}{1-\frac{1}{Dr_0^d}} \leq \exp(-(\smfrac 12-o(1))r_0 \log r_0), \] 
therefore 
\[ \limsup_{r,r_0} \frac{1}{r_0 \log^{1-1/\alpha} r_0} \log \Big( \sum_{k=\lfloor \frac 12 r_0 \rfloor+1}^{\infty} a_{k}(x_0) \Big) =-\infty. \]
Moreover, the assumption in Corollary~\ref{cor-A} that $\kmax(r_0) \geq k^*(r_0)$ for all sufficiently large $r_0>0$ together with \eqref{first-hopLDP} yields
\begin{align*} & \limsup_{r,r_0} \frac{1}{r_0 \log^{1-1/\alpha} r_0} \log \Big( \sum_{k=1}^{\lfloor \frac 12 r_0 \rfloor} a_{k}(x_0) \Big) \\ & \leq \limsup_{r,r_0} \frac{1}{r_0 \log^{1-1/\alpha} r_0} \log (\lfloor r_0/2 \rfloor) +  \limsup_{r,r_0}   \frac{1}{r_0 \log^{1-1/\alpha} r_0} \log a_{k^*}(x_0) = -( d t^* + \gamma b {t^*}^{1-\alpha} ), \end{align*} 
where we recall that $t^*=(b \gamma (\alpha-1)/d)^{1/\alpha}$ is the unique minimizer of $t \mapsto dt+t^{1-\alpha}$ on $(0,\infty)$, cf.~\eqref{easyminimum}. We conclude the upper bound in \eqref{Aexprate}. \ProofEnde

\subsubsection{Proof of Proposition  \ref{cor-straighthops}}\label{sec-straighthopproof} 

Let $\varepsilon,\delta>0$ be fixed. First, let us note that by the definition of $T_{x_0}$ and the fact that the behaviour of the Lebesgue measure restricted to $\overline{B_r(o)}$ is subexponential in our limit,  \eqref{r0expdecay} is equivalent to
\begin{align*} 
& \limsup_{r,r_0} \frac{1}{r_0 \log^{1-1/\alpha} r_0} \log \int_{D_{\varepsilon,\delta}(k(r_0),x_0)} \Big(\prod_{l=1}^{k(r_0)-1} \frac{\d x_l}{\Leb(B_r(o))}\Big)\, \e^{-\gamma \sum_{l=1}^{k(r_0)} g(x_{l-1},x_l)} \\  
< & \limsup_{r,r_0} \frac{1}{r_0 \log^{1-1/\alpha} r_0} \log a_{k(r_0)}(x_0) = -\big( dt^\ast+b\gamma {t^*}^{1-\alpha} \big), \numberthis\label{smallrate} 
\end{align*}
with $k(r_0) \sim t^* r_0 \log^{-1/\alpha} r_0$ and $x_{k(r_0)}=o$, where in the last step we used \eqref{first-hopLDP}. For this, it suffices to show that there exists $\varepsilon_1>0$ such that for any choice of $x_0\mapsto(x_1,\ldots,x_{k(r_0)-1})=(x_1(x_0),\ldots,x_{k(r_0)-1}(x_0)) \in D_{\varepsilon,\delta}(k(r_0),x_0))$ writing $I=I(x_0,x_1,\ldots,x_{k(r_0)-1})$ as in \eqref{depsilondelta}, we have
\[ \liminf_{r,r_0} \frac{\sum_{l=1}^{k(r_0)} g(x_{l-1},x_l)}{k(r_0) \log r_0} = \liminf_{r,r_0} \frac{\sum_{l=1}^{k(r_0)} g(x_{l-1},x_l)}{t^\ast r_0 \log^{1-1/\alpha} r_0} \geq b {t^*}^{-\alpha}+\varepsilon_1.  \numberthis\label{strictuniformbound} \]
Indeed, then one can argue analogously to \eqref{2bounds} to conclude the first inequality in \eqref{smallrate}.

Now we prove \eqref{strictuniformbound}. 
We will first replace the functional $(x,y) \mapsto  g(x,y)$ with $(x,y) \mapsto \frac{b}{\ell(|x-y|)}$ everywhere and then argue for $g(x,y)$, estimating the numerator of $g(\cdot,\cdot)$ similarly to Lemma~\ref{lemma-gapproxbound}.

We have, first using Jensen's inequality\index{Jensen's inequality} for the convex function $|\cdot|^\alpha$, then by the definition of $D_{\varepsilon,\delta}(k(r_0),x_0)$ together with the fact that $\alpha>1$,
\begin{align*} & \frac{1}{\# I} \sum_{l \in I} |x_l-x_{l-1}|^{\alpha} \geq  \Big(\frac{1}{\# I} \sum_{l \in I} |x_l-x_{l-1}|\Big)^{\alpha} \geq \Big( \frac{1}{\# I} \sum_{l \in I}  \big| |x_l|-|x_{l-1}|\big|)+\varepsilon \log^{1/\alpha} r_0 \Big)^{\alpha} \\ & \geq  \Big( \frac{1}{\# I} \sum_{l \in I}  \big||x_l|-|x_{l-1}|\big|\Big)^{\alpha}+(\varepsilon \log^{1/\alpha} r_0)^{\alpha}. \numberthis\label{startshere} \end{align*}
Similarly, by Jensen's inequality\index{Jensen's inequality} and the triangle inequality,
\[ \frac{\sum_{l \in [k(r_0)] \setminus I} |x_l-x_{l-1}|^{\alpha} }{k(r_0)-\# I}  \geq \Big(\frac{1}{k(r_0)-\# I} \sum_{l \in [k(r_0)] \setminus I} |x_l-x_{l-1}| \Big)^{\alpha} \geq \sum_{l \in [k(r_0)] \setminus I} \Big(\frac{1}{k(r_0)-\# I} \big||x_l|-|x_{l-1}|\big| \Big)^{\alpha}.\] 
Hence, more applications of Jensen's inequality\index{Jensen's inequality} yield
\begin{align*}
    & \frac{1}{k(r_0)} \sum_{l \in [k(r_0)]} |x_l-x_{l-1}|^{\alpha} \\
    &= \frac{\# I}{k(r_0)} \frac{1}{\# I} \sum_{l \in I}  |x_l-x_{l-1}|^{\alpha}  + \frac{k(r_0)-\# I}{k(r_0)} \frac{1}{k(r_0)-\# I} \sum_{l \in [k(r_0)] \setminus I} |x_l-x_{l-1}|^{\alpha}  
    \\ & \geq \frac{\# I}{k(r_0)} \Big( \frac{1}{\# I} \sum_{l \in I}  \big||x_l|-|x_{l-1}|\big|\Big)^{\alpha}  + \frac{k(r_0)-\# I}{k(r_0)} \Big(\frac{\sum_{l \in [k(r_0)] \setminus I} \big||x_l|-|x_{l-1}|\big|}{k(r_0)-\# I}  \Big)^{\alpha} +\frac{\# I(\varepsilon \log^{1/\alpha} r_0)^{\alpha}}{k(r_0)} 
    \\ & \geq \Big( \frac{1}{k(r_0)} \sum_{l \in [k(r_0)]} \big||x_{l-1}|-|x_{l}| \big|\Big)^{\alpha} +\delta \varepsilon^{\alpha} \log r_0 
    \geq \Big( \frac{r_0}{k(r_0)} \Big)^{\alpha} + \delta \varepsilon^{\alpha} \log r_0\\
    &=({t^*}^{-\alpha}+ \delta \varepsilon^{\alpha}) \log r_0, \numberthis\label{manyJensen}
\end{align*}
where in the penultimate step we used that $\# I \geq \delta k(r_0)$.

Now, we turn to $\ell(|\cdot|)$ instead of $|\cdot|^{-\alpha}$. Hence, we have to distinguish between $|\cdot|\leq 1$ and $|\cdot|>1$. Let us define $I' =I'(x_0,(x_1,\ldots,x_{k(r_0)-1})) \subseteq [k(r_0)]$ as the set of $l \in [k(r_0)]$ such that  $|x_l-x_{l-1}| \leq 1$. Without loss of generality, $I'$ is not empty. Then, after passing to a subsequence, if needed, we have that $\# I' \sim\delta' k(r_0)$ for some $\delta' \in [0,1]$.  Thus,
\[ \frac{1}{\# I'} \sum_{l \in I'} \frac{|x_{l-1}-x_{l}|-\big| |x_{l-1}|-|x_l|\big|}{\log^{1/\alpha} r_0}  = \Ocal(1/\log^{1/\alpha} r_0)=o(1). \numberthis\label{smallissmall} \]
Let us assume for a moment that $I \cap I' = \emptyset$ and $\delta'<1$. Splitting into $I'$ and $[k(r_0)]\setminus I'$, we obtain
\begin{equation}
\begin{aligned}
 \frac{1}{k(r_0)} \sum_{l \in [k(r_0)]}  \frac1{\ell(|x_l-x_{l-1}|)} 
 &\geq \frac{1}{k(r_0)} \Big( \Ocal(\# I') + \sum_{l \in [k(r_0)] \setminus I'} |x_l-x_{l-1}|^{\alpha}\Big) \\ 
 &\geq \delta'-o(1) + \frac{1-\delta'-o(1)}{k(r_0)-\# I'} \sum_{l \in [k(r_0)] \setminus I'} |x_l-x_{l-1}|^{\alpha}. \label{deltaprimeestimate}
\end{aligned}
\end{equation}
We want to apply to the last term a lower bound analogous to \eqref{manyJensen}, i.e., for the sum over $[k(r_0)] \setminus I'$ instead of $[k(r_0)]$. For this, we need that the sum of the $||x_{l-1}|-|x_l||$ satisfies a lower bound against $\sim r_0$. Using that $I \cap I' = \emptyset$, we indeed see this as follows:
\[ 
\sum_{l \in [k(r_0)] \setminus I'} \big||x_{l-1}|-|x_l|\big| \geq - (\delta'+o(1)) k(r_0)+ \sum_{l \in [k(r_0)]}\big||x_{l-1}|-|x_l|\big|   \geq r_0(1-o(1)).
\] 
Now, making a  computation analogous to \eqref{manyJensen} for the right-hand side of \eqref{deltaprimeestimate}, we obtain in our limit
\begin{align*} 
 \frac{1}{k(r_0)} \sum_{l \in [k(r_0)]} \frac1{\ell(|x_l-x_{l-1}|)} 
&\geq \delta'-o(1) + (1-\delta'-o(1))\Big[\Big( \frac{r_0}{\#([k(r_0)]\setminus I')} \Big)^{\alpha} + \frac\delta{1-\delta'} \varepsilon^{\alpha} \log r_0\Big]\\
&\geq \Big((1-\delta')^{1-\alpha} {t^\ast}^{-\alpha} +\delta \varepsilon^{\alpha}-o(1)\Big) \log r_0 
\geq  ({t^\ast}^{-\alpha} +\delta \varepsilon^{\alpha}-o(1)) \log r_0. 
\numberthis\label{ellbound} \end{align*}
The case $I \cap I' \neq \emptyset$ can be handled analogously as long as $\delta'<1$. Indeed, in this case \eqref{smallissmall} implies that $\liminf_{r,r_0} \# (I \setminus I')/k(r_0)$ and $\liminf_{r,r_0} \frac{1}{k(r_0)} \sum_{l \in I \setminus I'} (|x_{l-1}-x_l|-||x_{l-1}|-|x_l||)$ are positive. Thus, in our limit, a lower estimate on $\frac{1}{k(r_0)} \sum_{l \in [k(r_0)]} (|x_{l-1}-x_l|-||x_{l-1}|-|x_l||)$ can still be obtained analogously to \eqref{deltaprimeestimate}. Further, we observe that the corresponding lower bound on $\frac{1}{k(r_0)} \sum_{l \in [k(r_0)]} \frac1{\ell(|x_l-x_{l-1}|)} $ that is analogous to the first expression in the second line of \eqref{ellbound} tends to infinity as $\delta' \uparrow 1$. 

Hence, we have in any case that \eqref{ellbound} holds with $\delta \varepsilon^{\alpha}$ replaced by some positive number. From this, \eqref{strictuniformbound} follows for $(x,y) \mapsto g(x,y)$ replaced by $(x,y) \mapsto \frac{b}{\ell(|x-y|)}$ for some 
$\varepsilon_1>0$.

In order to conclude \eqref{strictuniformbound}, we now proceed similarly to the proof of \eqref{second-hopLDP}, that is, we use uniform convergence of the interferences to $b$ within $B_r(o)$ away from the boundary. Let us recall the auxiliary function $s$ and the index $K(r_0,r)$ at \eqref{k0def}. We essentially show that either a non-negligible part of the deviations from the straight line induced by the definition of $D_{\varepsilon,\delta}(k(r_0),x_0)$ takes place after the $K(r_0,r)$-th hop or the first $K(r_0,r)$ hops have a very high interference penalization value, and in both cases \eqref{strictuniformbound} holds.

For each $x_0$ with $|x_0|=r_0$, let us choose $ (x_1(x_0),\ldots,x_{k(r_0)-1}(x_0)) \in D_{\varepsilon,\delta}(k(r_0),x_0)$. Let us use the notation $\tau(r_0)=\tau(x_0,x_1(x_0),\ldots,x_{k(r_0)-1}(x_0))$ for
$ \tau(r_0)=\frac{K(r_0,r)}{k(r_0)}$.
Let us further write  $I(r_0)=I(x_0,x_1(x_0),\ldots,x_{k(r_0)-1}(x_0))$ for a choice of a set $I$ according to \eqref{depsilondelta}. According to \eqref{ellbound}, without loss of generality we can assume that $I'=I'(x_0,x_1(x_0),\ldots,x_{k(r_0)-1}(x_0))=\emptyset$ for all $x_0$ considered.

In our limit,  $\int_{B_r(o)}  \ell(|z-y|) \d z =b-o(1)$ uniformly in  $y \in \overline{B_{s(r)}(o)}$. Thus, in case $\tau(r_0)=0$, \eqref{ellbound} implies that \eqref{strictuniformbound} holds with some $\varepsilon_1>0$. Hence, in order to conclude \eqref{strictuniformbound}, we can assume that $\tau(r_0) \neq 0$ eventually in our limit. Further, by our assumptions on the function $s$, for any $\varepsilon_2>0$, eventually $s(r)>(1-\varepsilon_2)r_0$. Now, since $x_l(x_0) \in \overline{B_{s(r)}(o)}$ for all $l>K(r_0,r)$, similarly to \eqref{ellbound}, the convexity of $1/\ell(|\cdot|)$ implies the following
\begin{equation}\label{lesshops} 
\begin{aligned}
\frac{1}{k(r_0)} \sum_{l \in [k(r_0)]} g(x_{l-1},x_l) &\geq \frac{1}{k(r_0)} \sum_{l =K(r_0,r)+1}^{k(r_0)} g(x_{l-1},x_l)
\geq \frac{1}{k(r_0)} \sum_{l =K(r_0,r)+1}^{k(r_0)} \frac{b-o(1)}{\ell(|x_{l-1}-x_l|)}\\
&\geq \kappa(\varepsilon_2) (1-\tau(r_0))^{1-\alpha} (b-o(1)) {t^{*}}^{-\alpha} \log r_0 
\end{aligned}
\end{equation}
for some function $\kappa \colon [0,1] \to \R$ with $\lim_{\varrho\downarrow 0}\kappa(\varrho)=1$. Now, let $\eps_3>0$. Taking first our limit and then $\varepsilon_2 \downarrow 0$, we see that if $\liminf_{r,r_0} \tau(r_0)$ is at least $\eps_3$, then the proof of our goal \eqref{strictuniformbound} is finished; however, it is \emph{a priori} not clear that $\eps_1$ in \eqref{strictuniformbound} can be chosen uniformly bounded away from zero in the limit $\eps_3 \downarrow 0$. Therefore, we will now assume that $\liminf_{r,r_0} \tau(r_0)=0$; this is a case that has to be handled separately, and the computations corresponding to this case will also allow for handling the limit $\eps_3 \downarrow 0$ in the previous case. After passing to a subsequence, we can assume that $\lim_{r,r_0} \tau(r_0)=0$. 

Let us first investigate the case that $\limsup_{r,r_0}\frac 1{r_0} \sum_{l=K(r_0,r)+1}^{k(r_0)} (|x_{l-1}|-|x_l|)<1$. Then we have 
\[
\liminf_{r,r_0} \frac{1}{r_0} \sum_{l=1}^{K(r_0,r)} \big||x_{l-1}|-|x_l|\big|\geq \liminf_{r,r_0} \frac{1}{r_0} \sum_{l=1}^{K(r_0,r)} (|x_{l-1}|-|x_l|)>\varepsilon_4 
\]
for some $\varepsilon_4>0$ depending on $\limsup_{r,r_0}\frac 1{r_0} \sum_{l=K(r_0,r)+1}^{k(r_0)} (|x_{l-1}|-|x_l|)$. Thus, using that $\int_{B_r(o)} \ell(|z-y|) \d z \geq b_0-o(1)$ uniformly for $y \in \overline{B_r(o)}$ in our limit (where $b_0$ was defined before \eqref{fullintegral}), a convexity argument similar to \eqref{manyJensen} yields
\begin{equation}
\begin{aligned}
\liminf_{r,r_0}  \frac{1}{k(r_0)\log r_0} \sum_{l \in [k(r_0)]} g(x_{l-1},x_l) &\geq\liminf_{r,r_0} \frac{1}{{t^{\ast}}^{\alpha} k(r_0)^{1-\alpha} r_0^{\alpha} }\sum_{l=1}^{K(r_0,r)} g(x_{l-1},x_l) \\
&\geq \liminf_{r,r_0} \varepsilon_4^{\alpha} {t^{\ast}}^{-\alpha} (b_0-o(1)) \Big( \frac{K(r_0,r)}{k(r_0)} \Big)^{1-\alpha}=\infty. 
\end{aligned}
\end{equation}

Next, we fix $\eps_5 \in (0,\eps)$ and we consider the case that $\liminf_{r,r_0}\frac 1{r_0} \sum_{l=K(r_0,r)+1}^{k(r_0)} (|x_{l-1}|-|x_l|) \geq 1$ (observe that the total sum over all $l\in[k(r_0)]$ is telescoping and hence equal to $r_0$) and 
\begin{equation}\label{partialsum}
\liminf_{r,r_0}  \frac{1}{\# I(r_0)} \sum_{l \in I(r_0)\colon l>K(r_0,r)} \frac{|x_{l-1}-x_l|-\big||x_{l-1}|-|x_l|\big|}{\log^{1/\alpha}r_0}>\eps_5.
\end{equation}
Then one can employ an estimate analogous to \eqref{manyJensen} in order to conclude \eqref{strictuniformbound}.

Finally, we consider the case that $\liminf_{r,r_0} \frac{1}{r_0} \sum_{l=K(r_0,r)+1}^{k(r_0)} (|x_{l-1}|-|x_l|) \geq 1$ but  \eqref{partialsum} fails. After passing to a subsequence, we can assume that $\exists \lim_{r,r_0} w(r_0)\geq\eps-\varepsilon_5$, where we put
\[ 
w(r_0) = \frac{1}{\# I(r_0)} \sum_{l \in I(r_0) \cap [K(r_0,r)]} \frac{|x_{l-1}-x_l|-\big||x_{l-1}|-|x_l|\big|}{\log^{1/\alpha}r_0}.
\]
Using also that $\# I(r_0)\geq \delta k(r_0)\sim \delta t^* r_0\log^{1/\alpha} r_0$, we have
\[ \eps-\varepsilon_5-o(1) \leq  \frac{1}{\# I(r_0)} \sum_{l \in I(r_0) \cap [K(r_0,r)]} \frac{|x_{l-1}-x_l|}{\log^{1/\alpha}r_0} \leq \Big( \frac{1}{\delta t^{\ast}} + o(1) \Big) \sum_{l \in I(r_0) \cap [K(r_0,r)]} \frac{|x_{l-1}-x_l|}{r_0}. \]
Thus,  a convexity argument similar to \eqref{manyJensen} implies
\begin{align*} 
&  \liminf_{r,r_0} \frac{1}{k(r_0) \log r_0} \sum_{l \in [k(r_0)]} g(x_{l-1},x_l) \geq \liminf_{r,r_0} \frac{1}{k(r_0) \log r_0} \sum_{l \in I(r_0) \cap [K(r_0,r)]}   g(x_{l-1},x_l) \\ 
& \geq  \liminf_{r,r_0} \frac{1}{k(r_0) \log r_0} (b_0-o(1)) \Big(\frac{r_0 \delta t^{*}(\varepsilon-\eps_5)}{\# (I(r_0) \cap [K(r_0,r)])}\Big)^{\alpha}  \# (I(r_0) \cap [K(r_0,r)]) \\
&\geq  \liminf_{r,r_0} \tau(r_0)^{1-\alpha} b_0(\delta (\eps -\varepsilon_5))^{\alpha}
=\infty. 
\end{align*}
Hence, in case $\liminf_{r,r_0} \tau(r_0)=0$, \eqref{strictuniformbound} holds with a suitable choice of $\varepsilon_1>0$. Further, the computations corresponding to this case show that this $\eps_1$ can be chosen in such a way that as $\liminf_{r,r_0}\tau(r_0)$ tends to zero, we have a lower bound on $\frac{1}{k(r_0) \log r_0} \sum_{l \in [k(r_0)]} g(x_{l-1},x_l)$ the $\liminf$ of which does not exceed $\varepsilon_1$. This allows for handling the limit $\eps_3\downarrow 0$ in the earlier case that $\liminf_{r,r_0}\tau(r_0) \geq \eps_3$.

Taking infimum over the values of $\eps_1$ in \eqref{strictuniformbound} corresponding to all the different cases, we see that this infimum is positive. We conclude that \eqref{strictuniformbound} holds with a suitable choice of $\varepsilon_1>0$. \ProofEnde

\subsection{Discussion about the results of Section \ref{sec-largespace}} \label{sec-koptdiscussion}
This section discusses the relevance and extensions of the results of Section~\ref{sec-kopt}. In Section~\ref{sec-largespacediscussion} we interpret our large-distance limit, in Section~\ref{sec-pathlossdiscussion} we explain how the choice of the path-loss function influences our results, and in Section~\ref{sec-kmax=inftydiscussion} we comment on allowing an arbitrary number of hops.  

\subsubsection{The large-distance limit} \label{sec-largespacediscussion}

In Section~\ref{sec-kopt}, we consider the typical trajectory in a large homogeneous multihop communication system with one base station in the area $W$, after the high-density limit has been taken. According to the basic rules in this system, virtually every hop in the area $W$ is homogeneously admitted (even those that do not bring the message any closer to the base station or even further away), but an exponential interference weight is given to the joint configuration of all the trajectories. It may appear somewhat irrelevant to consider a limit of large area, large distances and many hops, since with an increasing number of hops the technical difficulties and annoying side-effects become larger, but our work is meant to reveal the basic effects emerging in such a setting, in particular the effect of the interference penalization, and our result in terms of a large deviation principle gives also bounds on deviations from the extreme regime.

Since the interference term in particular gives small weights to large hops, it may be expected that the typical trajectory turns out to follow a straight line with all the hops being of the same length, but it may also come as a surprise that the typical hop length diverges like a power of the logarithm of the distance. The reason for this is the fact that {\em a priori} all the hops (within the area) are admitted and that, in the distribution $T_{x_0}$ of the typical trajectory, as $W$ grows in size, a very small weight term $1/\Leb(W)=1/\mu(W)$ for each hop appears. This favours a small number of hops. The best compromise between this effect and the interference effect turns out to be on a logarithmic scale.

One could think of a model in which the search for the next hop is done only in a neighbourhood of the current location, which would presumably lead to the removal of the small weight term $1/\Leb(W)$ per hop and finally to a number of hops that is linear in the distance from the origin, but this would make the decay of the path-loss function $\ell$ irrelevant and describe a fundamentally different organization of message routeing in the telecommunication system. Such an organization is found e.g.~in the continuum percolation setting of \cite{YCG11}, where the optimal number of hops turns out to be asymptotically linear in the distance from the user to the origin in a large-distance limit. Further, \cite[Theorem 2.1]{YCG11} claims that the probability of having trajectories of a significantly unusual length decays exponentially fast, which is analogous to our Proposition \ref{cor-straighthops}.

\subsubsection{The role of the choice of the path-loss function in the large-distance limit}\label{sec-pathlossdiscussion}

We derived our large-distance statements for the path-loss function $\ell(r)=\min \{ 1, r^{-\alpha} \}$ for $\alpha>d$, since this $\ell$ describes the propagation of signal strength realistically, see e.g. \cite{BB09, GT08, HJKP15}. However, following the proofs of the results of Section~\ref{sec-kopt} presented in Section~\ref{sec-Aproof}, we conclude that analogous results hold whenever the path-loss function $\ell$ has the following two properties: $\int_{\R^d} \ell(|x|) \d x < \infty$ and $1/\ell$ is convex. If $\ell$ satisfies these assumptions, then in our large-distance limit, in the optimal strategy (cf.~Section~\ref{sec-Aproofmain}), the user takes $\asymp k(r_0)$ hops, where $r_0 \mapsto k(r_0)$ satisfies
\[ \log(r_0) \sim \ell\Big( \frac{r_0}{k(r_0)} \Big)^{-1}. \numberthis\label{ellscale} \]
This shows that the optimal scale depends only on the tail behaviour of $\ell$. Thus, for example, the results of Section~\ref{sec-kopt} also hold for the path-loss function $\ell(r)= (K+r)^{-\alpha}$, $K>0$, $\alpha>d$. In general, \eqref{ellscale} shows that under the two above assumptions on $\ell$, the optimal scale diverges to $\infty$ and is sublinear. The faster $\ell$ decays, the slower $r_0/k(r_0)$ grows.  E.g., if $\ell(r)=\e^{-\alpha r}$ for some $\alpha>0$, then the correct scale is $k(r_0) \asymp r_0/\log \log r_0$.

\subsubsection{Allowing an unbounded number of hops}\label{sec-kmax=inftydiscussion} 

Note that the interference term is linear in the number of hops, hence this number is upper bounded by some geometric random variable and thus almost surely finite, even without the upper bound $\kmax$. For a similar reason, the measures $\nu_k$ in \eqref{nukminimizerbeta=0} are also well-defined and are the unique minimizers of the variational formula \eqref{beta0variation} for $\kmax=\infty$. Also, it is clear that the proof of Corollary~\ref{cor-A} also works if we choose $\kmax=\infty$ for each $r_0$. However, the proof techniques of \cite[Proposition 2.1]{KT17} do not generalize to the case $\kmax=\infty$ or $\kmax$ being a function of $\lambda$ and tending to infinity as $\lambda \to \infty$. Thus, as long as an analogue of this proposition has not been proven for $\kmax=\infty$, these results have no verified connection with a Gibbsian model. Proving such an analogue may be a mathematically interesting task, nevertheless, from a modelling point of view, we find the necessity of taking an arbitrary 
number of hops in a fixed compact communication area questionable.

\section{Strong penalization for the interference}\label{sec-gammatoinfty} 
This section is devoted to regime~\eqref{regime2}, i.e., the limit of strong penalization of interference. Our main result corresponding to this, Proposition~\ref{prop-gammatoinfty}, is stated in Section~\ref{sec-gammatoinftystatement} and proven in Section~\ref{sec-gammatoinftyproof}.
\subsection{Strong interference penalization makes message trajectories straight} \label{sec-gammatoinftystatement}

Proposition \ref{cor-straighthops} shows that in the large-distance limit, with $\mu$ being the Lebesgue measure in a large ball $W$, the typical message trajectory from the transmitter $x_0$ to $x_k=o$ under $T_{x_0}$ does not deviate much from the straight line with high probability. In this proposition, $|x_0|$, $k=k(|x_0|)$ and the radius of $W$ are assumed to tend to infinity in a certain coupled way. From an application point of view, it is also desirable to see a similar effect for a fixed compact communication area $W$, a fixed starting site $x_0$ and a fixed upper bound $\kmax \in \N$ on the hop number. One way to find such an effect is to consider the limit of a large interference penalization parameter $\gamma$. It is easily seen from \eqref{Tident} that this limiting behaviour should be entirely described by the minimizer of $W^{k-1} \ni (x_1,\ldots,x_{k-1}) \mapsto \sum_{l=1}^{k}g(x_{l-1},x_l)$. In this section, for $k \in [\kmax]$, we write $\nu_k^\gamma$ for the measure $\nu_k$ introduced in \
eqref{nukminimizerbeta=0} and $T_{x_0}^\gamma$ for the measure $T_{x_0}$ defined in \eqref{Tdef} corresponding to the parameter $\gamma$. Our next result gives criteria under which this minimizer follows a straight line and we have exponential estimates for deviations of trajectories from that.

Let us consider the case where $W$ is a closed ball $\overline{B_r(o)}$ with $r>0$, and the path-loss function $\ell$ is strictly monotone decreasing (and satisfies the original condition that it is continuous and positive on $[0,\infty)$). A typical choice \cite[Section 22.1.2]{BB09}  is $\ell(r)=(1+r)^{-\alpha}$. Further, let us assume that the intensity measure is rotationally invariant, i.e., $\mu \circ O^{-1} = \mu$ for any orthogonal $d \times d$ matrix $O$. Under these conditions, we conclude that any minimizer of $W^{k-1} \ni (x_1,\ldots,x_{k-1}) \mapsto \sum_{l=1}^{k}g(x_{l-1},x_l)$ is of the form $x_l=c_l x_0$ for $l=1,\ldots,k-1$ with positive constants $1 > c_1 > \ldots > c_{k-1}>0$. Moreover, the total probability mass carried by trajectories deviating from the straight line through the transmitter and $o$ in Euclidean distance at least by some fixed positive quantity decays exponentially fast as $\gamma \to \infty$.

More precisely, writing $[[x,y]]=\lbrace \alpha x +(1-\alpha) y \vert \alpha \in \R \rbrace$ for the line through $x,y \in \R^d$, we state the following.

\begin{prop} \label{prop-gammatoinfty} Let $r>0, W=\overline{B_r(o)}, \kmax \geq 2, \ell$ and $\mu$ be fixed. Let us assume that $\ell$ is strictly monotone decreasing and $\mu$ is rotationally invariant. 
\begin{enumerate}[(A)]
\item \label{first-gammatoinfty} For $x_0 \in W$, let us write
\[ \m_{\kmax}(x_0) = \min_{k \in [\kmax]} \min_{x_1,\ldots,x_{k-1} \in W} \sum_{l=1}^{k}g(x_{l-1},x_l),\qquad x_k=o.\] 
Then, for any minimizer $k \in [\kmax]$ and $x_1,\ldots,x_{k-1}$, there exist $1>c_1>\ldots>c_{k-1}>0$ such that $x_l=c_l x_0$ for all $l \in [k-1]$.

\item \label{second-gammatoinfty} For $k \in [\kmax]$ and $\varepsilon>0$, let us define
\[ D^\varepsilon_k(x_0) = \lbrace (x_1,\ldots,x_{k-1}) \in W^{k-1} \mid ~\exists l \in \lbrace 1,\ldots,k-1 \rbrace \colon \dist(x_l,[[x_0,o]])>\varepsilon \rbrace. \numberthis\label{penaltyminimum} \]
Then, we have
\[ \sup_{x_0 \in W} \sup_{k \in [\kmax]} \limsup_{\gamma \to \infty} \frac{1}{\gamma} \log \sup_{(x_1,\ldots,x_{k-1}) \in D_k^\varepsilon(x_0)} T^\gamma_{x_0}(k,x_1,\ldots,x_{k-1}) < 0.  \numberthis\label{gammaexpdecay} \]
\end{enumerate}
\end{prop}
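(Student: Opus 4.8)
\emph{Part (A): structure of the minimisers.} Write $g(x,y)=h(y)/\ell(|x-y|)$ with $h(y)=\int_W\mu(\d z)\,\ell(|z-y|)>0$; since $\mu$ is rotationally invariant and $W=\overline{B_r(o)}$ is a centred ball, $h(y)$ depends on $y$ only through $|y|$. Fix $x_0\neq o$ (the case $x_0=o$ being trivial) and a minimiser $(k,x_1,\dots,x_{k-1})$ of $\m_{\kmax}(x_0)$. For fixed radii $\rho_l:=|x_l|$, the cost $\sum_{l=1}^k g(x_{l-1},x_l)=\sum_{l=1}^k h(\rho_l)/\ell(|x_{l-1}-x_l|)$ is, term by term, decreasing in each $|x_{l-1}-x_l|$; replacing $x_l$ by $\rho_l\,x_0/|x_0|$ puts all points on the ray through $x_0$ and realises simultaneously the minimal value $|x_{l-1}-x_l|=\big||x_{l-1}|-|x_l|\big|$ of every distance (reverse triangle inequality), without leaving $W$. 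Hence this is again a minimiser, and comparison with the original one forces equality in each reverse triangle inequality; as $\ell$ is \emph{strictly} decreasing, such equality forces $x_{l-1}$ and $x_l$ to lie on a common ray from $o$. If some intermediate $x_j$ were $o$, truncating the trajectory at $x_j$ would drop the strictly positive tail $\sum_{l>j}g(x_{l-1},x_l)$ while keeping the hop number in $[\kmax]$, contradicting minimality; iterating then gives $x_l=c_l x_0$ with $c_l>0$ for $1\le l\le k-1$, $c_0=1$, $c_k=0$. Finally, if $(c_l)_{l=0}^{k}$ were not strictly decreasing, an elementary argument (using $c_0>c_k$ and $c_l>0$ for $l<k$) yields an interior index $j\in\{1,\dots,k-1\}$ with $c_{j-1}\le c_j\ge c_{j+1}$; removing the relay $x_j$ replaces $g(x_{j-1},x_j)+g(x_j,x_{j+1})$ by $g(x_{j-1},x_{j+1})$, and since $\big||x_{j-1}|-|x_{j+1}|\big|\le\max\{\big||x_{j-1}|-|x_j|\big|,\big||x_j|-|x_{j+1}|\big|\}$ and the points are collinear with $\ell$ decreasing, this strictly lowers the cost --- a contradiction. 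This proves part (A): every minimiser satisfies $1>c_1>\dots>c_{k-1}>0$.

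\emph{Part (B): setting up the exponential estimate.} By \eqref{Tident}, $T^\gamma_{x_0}(k,x_1,\dots,x_{k-1})=A^\gamma(x_0)\,\mu(W)^{1-k}\e^{-\gamma\sum_{l=1}^k g(x_{l-1},x_l)}$, and $1/A^\gamma(x_0)=\sum_{k'=1}^{\kmax}\int_{W^{k'-1}}\prod_{l=1}^{k'-1}\frac{\mu(\d x_l)}{\mu(W)}\,\e^{-\gamma\sum_{l=1}^{k'}g(x_{l-1},x_l)}$. Fix an optimal hop number $k^*$ and a minimiser $x^*$ of $\m_{\kmax}(x_0)$, and for $\eta>0$ pick a ball $B$ around $x^*$ with $\mu(B)>0$ and $\sum_{l=1}^{k^*}g\le\m_{\kmax}(x_0)+\eta$ on $B^{k^*-1}$ (possible by continuity of $g$ and absolute continuity of $\mu$; if $\supp\mu\neq W$, replace $W$ by $\supp\mu$ throughout). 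Restricting the $k^*$-summand of $1/A^\gamma(x_0)$ to $B^{k^*-1}$ gives $1/A^\gamma(x_0)\ge(\mu(B)/\mu(W))^{k^*-1}\e^{-\gamma(\m_{\kmax}(x_0)+\eta)}$, whence, for every $k\in[\kmax]$,
\[
\sup_{(x_1,\dots,x_{k-1})\in D_k^\varepsilon(x_0)}T^\gamma_{x_0}(k,x_1,\dots,x_{k-1})\ \le\ C(x_0,k,\eta)\,\e^{\gamma\big(\m_{\kmax}(x_0)+\eta-\inf_{D_k^\varepsilon(x_0)}\sum_{l=1}^{k}g\big)},
\]
with $C(x_0,k,\eta)\in(0,\infty)$ independent of $\gamma$. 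Therefore $\limsup_{\gamma\to\infty}\tfrac1\gamma\log\sup_{D_k^\varepsilon(x_0)}T^\gamma_{x_0}(k,\cdot)\le\eta-c(x_0,k)$, where $c(x_0,k):=\inf_{\overline{D_k^\varepsilon(x_0)}}\sum_{l=1}^k g-\m_{\kmax}(x_0)$, and letting $\eta\downarrow0$ it remains to show $\inf_{x_0\in W}\inf_{k\in[\kmax]}c(x_0,k)>0$.

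\emph{Part (B): positivity and uniformity of $c(x_0,k)$.} For fixed $x_0\neq o$ and $k$, the set $\overline{D_k^\varepsilon(x_0)}\subset W^{k-1}$ is compact and $\sum_{l=1}^k g$ is continuous, so the infimum defining $c(x_0,k)$ is attained, at some $y$. If $k$ is not an optimal hop number, then already $\min_{W^{k-1}}\sum_{l=1}^k g>\m_{\kmax}(x_0)$, so $c(x_0,k)>0$. If $k$ is optimal and $c(x_0,k)=0$, then $y$ is a minimiser of $\m_{\kmax}(x_0)$ with hop number $k$, hence by part (A) each $y_l$ lies on $[[x_0,o]]$, i.e.\ $\dist(y_l,[[x_0,o]])=0<\varepsilon$ for all $l$, contradicting $y\in\overline{D_k^\varepsilon(x_0)}$; so $c(x_0,k)>0$. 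Since $[\kmax]$ is finite, it suffices to bound $\inf_{x_0\in W}c(x_0,k)$ below by a positive constant for each $k$. Here $x_0\mapsto\m_{\kmax}(x_0)$ is continuous (a finite minimum of infima over compacta depending continuously on $x_0$), and $x_0\mapsto[[x_0,o]]$ is continuous on $W\setminus\{o\}$, so $x_0\mapsto\overline{D_k^\varepsilon(x_0)}$ is continuous in the Hausdorff metric and $x_0\mapsto\inf_{\overline{D_k^\varepsilon(x_0)}}\sum_{l=1}^k g$ is continuous there, the point $x_0=o$ being handled separately by a direct limiting argument. Thus $x_0\mapsto c(x_0,k)$ is lower semicontinuous and strictly positive on the compact set $W$, hence bounded below by a positive constant; this gives $\inf_{x_0,k}c(x_0,k)>0$ and, by the previous paragraph, \eqref{gammaexpdecay}.

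\emph{Main obstacle.} I expect the two delicate points to be, in part (A), the \emph{strict} radial monotonicity of the minimiser --- which is why a relay-removal argument (rather than a local perturbation, which cannot control the unknown monotonicity of $h$) is needed, combined with strict monotonicity of $\ell$ and positivity of $h$ --- and, in part (B), upgrading the pointwise statement $c(x_0,k)>0$ to a \emph{uniform} positive lower bound over all $x_0\in W$, which rests on the continuous (Hausdorff) dependence of $\overline{D_k^\varepsilon(x_0)}$ on $x_0$ and on handling the mild degeneracy at $x_0=o$.
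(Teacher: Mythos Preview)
Your argument in part (A) that every minimiser lies on the ray $\{c\,x_0:c\ge0\}$ is correct and in fact more direct than the paper's: the paper first reflects into a half-space and only then projects radially, whereas your single radial projection plus the induction along the chain (using the truncation argument to exclude intermediate visits to $o$) handles both steps at once.

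However, your proof of \emph{strict} monotonicity of the $c_l$ has a gap. After removing a local maximum $x_j$ (with $c_{j-1}\le c_j\ge c_{j+1}$) you must show
\[
g(x_{j-1},x_{j+1})\ <\ g(x_{j-1},x_j)+g(x_j,x_{j+1}),
\]
and the distance inequality you quote does not give this, because the numerators $h(|x_j|)$ and $h(|x_{j+1}|)$ of $g$ do not match. Concretely, if $c_{j-1}<c_{j+1}$ and $2c_{j+1}>c_{j-1}+c_j$, the maximum on the right is $|x_{j-1}-x_j|$, and you would be comparing $h(|x_{j+1}|)/\ell(|x_{j-1}-x_{j+1}|)$ with $h(|x_j|)/\ell(|x_{j-1}-x_j|)$; nothing in the hypotheses controls $h(|x_{j+1}|)/h(|x_j|)$. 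The paper sidesteps this by passing to the subsequence of \emph{strict record lows} $i_0=0$, $i_m=\inf\{l>i_{m-1}:|x_l|<|x_{i_{m-1}}|\}$: each new hop $g(x_{i_{m-1}},x_{i_m})$ is then compared to the old hop $g(x_{i_m-1},x_{i_m})$ with the \emph{same} second argument, so the numerator $h(|x_{i_m}|)$ cancels and one only needs $|x_{i_{m-1}}-x_{i_m}|\le|x_{i_m-1}-x_{i_m}|$, which follows from $c_{i_m-1}\ge c_{i_{m-1}}>c_{i_m}$. The terms dropped along the way supply the strict improvement. Your local-max deletion can be repaired along these lines, but as written it is incomplete.

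For part (B) your approach is essentially correct but genuinely different from the paper's. You deduce $c(x_0,k)>0$ pointwise from part (A) and then upgrade to a uniform lower bound by compactness and lower semicontinuity of $x_0\mapsto c(x_0,k)$. The degenerate point $x_0=o$ is indeed harmless: since $o\in[[x_0,o]]$ for every $x_0$, one has $D_k^\varepsilon(x_0)\subset D_k^\varepsilon(o)$, and a standard subsequential-limit argument then gives $\liminf_{x_0\to o}c(x_0,k)\ge c(o,k)>0$. The paper instead proves a uniform quantitative gap directly: some hop must increase its distance from $[[x_0,o]]$ by at least $\varepsilon/\kmax$, and Pythagoras converts this into an explicit $\delta(\varepsilon)>0$ independent of $x_0$ and $k$. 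Your compactness route is shorter; the paper's yields an explicit rate.
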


The proof of the first part of this proposition is based on simple geometric arguments, while the proof of the second part additionally uses the Laplace method. Note that in the first part, a minimizer always exists because $W$ is compact and $g$ is continuous. The proof is carried out in Section \ref{sec-gammatoinftyproof}. 

We expect that Proposition~\ref{prop-gammatoinfty} \eqref{first-gammatoinfty} is not true in general if $\ell$ is not strictly monotone decreasing. Indeed, in this case, modifying the position of a relay in a path that is optimal with respect to interference penalization may not change the penalization at all. This indicates that if $\mathfrak m_{\kmax}(x_0)$ is attained for some $x_0$ by a path along $[x_0,o]$, it may also be attained by a non-straight path.

\subsection{Proof of Proposition \ref{prop-gammatoinfty}} \label{sec-gammatoinftyproof}

Throughout the proof, given any number of hops $k \in [\kmax]$, we will always assume that $x_k=o$. 

We start with proving part \eqref{first-gammatoinfty}. Let us fix $x_0 \in \overline{B_r(o)}$. The fact that $(x,y) \mapsto g(x,y)$ is bounded away from 0 implies that for $x_0=o$, $\mathfrak m_{\kmax}(x_0)$ is uniquely attained at the 1-hop trajectory from $x_0$ to $x_1=o$. Thus, we can assume that $x_0 \neq o$.

Let now $k \in [\kmax]$ and $(x_1,\ldots,x_{k-1}) \in \overline{B_r(o)}^{k-1}$. Let us assume that $\sum_{l=1}^{k} g(x_{l-1},x_l) = \mathfrak m_{\kmax}(x_0)$. We show that there are $1>c_1>\ldots>c_{k-1}>0$ such that $x_j=c_j x_0$ for all $j \in [k-1]$, proceeding in the following steps.
\begin{enumerate}[(i)]
    \item Let $\mathcal H$ denote the closed half-space of $\R^d$ that contains $x_0$ and whose boundary is orthogonal to the vector from $x_0$ to $o$ and contains $o$. Then $(x_1,\ldots,x_{k-1}) \in \mathcal H^{k-1}$.
    \item $(x_1,\ldots,x_{k-1}) \in (\mathcal H \cap [x_0,o])^{k-1}$, where we write $[x,y]=\lbrace \alpha x + (1-\alpha) y \colon \alpha \in [0,1] \rbrace$ for the closed segment between $x,y \in \R^d$.
    \item $|x_0|>|x_1| >\ldots> |x_{k-1}|>0$.
\end{enumerate}
We prove these claims respectively as follows.

\begin{enumerate}[(i)]
    \item Assume that the assertion does not hold, then let us define another trajectory $(x_1',\ldots,x'_{k-1}) \in \mathcal H^{k-1}$ via $x_l'=x_l$ if $x_l \in \mathcal H$ and $x_l'$ being the image of $x_l$ under reflection across the boundary hyperplane of $\mathcal H$ otherwise, for all $l \in [k-1]$. The rotation invariance of $\mu$ and $W$, combined with $|x_l|=|x_l'|$, implies that
\[ \int_{W} \mu(\d y) \ell(|x_l-y|) = \int_{W} \mu(\d y) \ell(|x'_{l}-y|), \qquad \forall l\in[\kmax].\numberthis\label{numerator} \]
But, since $|x_{l-1}-x_l| \geq |x'_{l-1}-x'_l|$ and $\ell$ is strictly decreasing, 
\[ \ell(|x_{l-1}-x_l|) \leq \ell(|x'_{l-1}-x'_l|), \numberthis\label{denominator} \]
where equality holds if and only if $x_{l-1},x_l$ are both in $\mathcal H$ or both in $\R^d \setminus \mathcal H$. We conclude that $\sum_{l=1}^{k} g(x_{l-1},x_l) > \sum_{l=1}^k g(x'_{l-1},x'_l)$, which contradicts $(x_1,\ldots,x_{k-1})$ being the minimizer in \eqref{penaltyminimum}. 

   \item The case $d=1$ is trivial. Let us consider the case $d \geq 2$. Assume  $(x_1,\ldots,x_{k-1}) \in \mathcal H^{k-1}$. Let us define another trajectory $(x_1',\ldots,x'_{k-1}) \in (\mathcal H \cap [x_0,o])^{k-1}$ such that for all $l \in [k-1]$, $x_l'$ satisfies $|x_l'|=|x_l|$ and $x_l' \in [x_0,o]$. That is, $x'_l=x_0 |x_l|/|x_0|$. Then, the radial symmetry of $\mu$ implies that \eqref{numerator} holds. Furthermore, the fact that $\ell$ is strictly decreasing but  $|x_{l-1}-x_l| \geq |x'_{l-1}-x'_l|$ implies that also \eqref{denominator} is true in this case, where equality holds if and only if $x_l=x_l'$ for all $l \in [k-1]$, i.e., if $x_l \in [x_0,o]$ for all $l \in [k-1]$. 

      \item Let $(x_1,\ldots,x_{k-1}) \in [x_0,o]^{k-1}$. In the following argument, we cancel in this trajectory all hops that increase the distance from $o$. This results in a smaller sum of the interference terms. Indeed, let us define $i_0=0$ and $i_j=\inf \lbrace l \in [k] \colon |x_l|<|x_{i_{j-1}}|\rbrace$, $j=1,\ldots,k$. Let $m$ be the largest index $j$ such that $i_j <\infty$, then it is clear that $1 \leq m \leq k$ since $x_0 \neq o$. Now, let us define an $m$-hop trajectory with relay sequence $(y_1,\ldots,y_{m-1})=(x_{i_1},\ldots,x_{i_{m-1}})$, writing $y_0=x_0$ and $y_m=o$. Let us further define $\varepsilon'=\min_{x,y\in\overline{B_r(o)}} g(x,y)>0$. Then, since for any $j \in [m-1]$ we have that $|x_{i_j-1}-x_{i_j}| \geq |x_{i_{j-1}}-x_{i_j}|$, we conclude that 
   \[ \sum_{j=1}^{m} g(y_{j-1},y_j)=\sum_{j=1}^m g(x_{i_{j-1}},x_{i_j}) \leq \sum_{j=1}^{m} g(x_{i_j-1},x_{i_j}) \leq  \sum_{l=1}^{k} g(x_{l-1},x_{l})-(k-m)\varepsilon'. \]
   Thus, $(x_1,\ldots,x_{k-1})$ can only minimize \eqref{penaltyminimum} if $k=m$, that is, if $|x_0|>|x_1|> \ldots> |x_{k-1}|>0$.
\end{enumerate}

This finishes the proof of part \eqref{first-gammatoinfty} of Proposition \ref{prop-gammatoinfty}.

As for part \eqref{second-gammatoinfty}, we note that the case $d=1$ is trivial since $D_k^\varepsilon(x_0) = \emptyset$ for all $x_0 \in \overline{B_r(o)}$. Throughout the rest of the proof, let $d \geq 2$. First, we fix $x_0 \in \overline{B_r(o)}$ and $k \in [\kmax]$, and we verify that
\[ \limsup_{\gamma \to \infty} \frac{1}{\gamma} \log \sup_{(x_1,\ldots,x_{k-1}) \in D_k^\varepsilon(x_0)} T^\gamma_{x_0}(k,x_1,\ldots,x_{k-1}) < -\kappa \numberthis\label{fixedx0k} \]
for some $\kappa>0$ that neither depends on $x_0$ nor on $k$. This will imply \eqref{gammaexpdecay}.

Again, it is easy to see that if $x_0=o$, then \eqref{fixedx0k} holds for some $\kappa>0$, let us therefore assume that $x_0 \neq o$. We first verify that there exists $\delta=\delta(\varepsilon)>0$, independent of $x_0$ and $k$, such that
\[ \m^\varepsilon_{\kmax}(x_0) =  \inf_{(x_1,\ldots,x_{k-1}) \in D_k^\varepsilon(x_0)} \sum_{l=1}^k g(x_{l-1},x_l) \geq \m_{\kmax}+\delta(\varepsilon). \numberthis\label{Depsilonmax} \]

In the construction of $(x_1,\ldots,x_{k-1}) \mapsto (x'_1,\ldots,x'_{k-1})$ in the proof of (i) above, the fact that $\mathrm{dist}(x_l,[[x_0,o]])=\mathrm{dist}(x_l',[[x_0,o]])$ for all $l \in [k-1]$ and $k \in [\kmax]$ implies that if  $(x_1,\ldots,x_{k-1}) \in D_k^\varepsilon(x_0)$, then $(x'_1,\ldots,x'_{k-1}) \in D_k^\varepsilon(x_0) \cap \mathcal H^{k-1}$. It follows that the infimum in \eqref{Depsilonmax} can be realized along sequences of trajectories that have all their relays $x_1,\ldots,x_{k-1}$ in $\mathcal H$.

Let now $(x_1,\ldots,x_{k-1}) \in D_k^\varepsilon(x_0) \cap \mathcal H^{k-1}$, and consider the construction of $(x_1,\ldots,x_{k-1}) \mapsto (x'_1,\ldots,x'_{k-1})$ in the proof of (ii) above. We observe the following. Since $x_0 \in [x_0,o]$ and $(x_1,\ldots,x_{k-1}) \in D_k^\varepsilon(x_0)$, there exists $l_1 \in [k]$ such that \[ \mathrm{dist}(x_{l_1},[[x_0,o]])>\mathrm{dist}(x_{l_1-1},[[x_0,o]])+\frac{\varepsilon}{k} \geq \mathrm{dist}(x_{l_1-1},[[x_0,o]])+\frac{\varepsilon}{\kmax}, \]
where each $[[x_0,o]]$ can also be replaced by $[x_0,o]$. One easily sees that this bound holds uniformly in $x_0 \in W$ and $k \in [\kmax]$.

Now, the Pythagoras theorem together with the fact that $\ell$ is strictly monotone decreasing yields that in this case there exists $\delta'(\varepsilon)>0$ such that $\ell(|x_{l_1-1}-x_{l_1}|) < \ell(|x'_{l_1-1}-x'_{l_1}|)-\delta'(\varepsilon)$. Note that $\delta'(\varepsilon)$ depends only on $\ell$, $r$ and $\varepsilon$ but not on $k$ or $l_1$. On the other hand, by the rotational symmetry of $\mu$, the identity \eqref{numerator} holds for all $l \in [k]$ for this choice of the relays $x_l$ and $x_l'$. Therefore, we conclude that there exists a constant $\delta=\delta(\varepsilon)>0$ such that for all $n \in \N$ we have
\[ \sum_{l=1}^{k} g(x_{l-1},x_{l}) > \sum_{l=1}^{k} g(x_{l-1}',x_{l}')+\delta(\varepsilon) \geq \mathfrak m_{\kmax} +\delta(\varepsilon). \]
This implies \eqref{Depsilonmax}, and the construction shows that $\delta(\varepsilon)>0$ can be chosen independently of $x_0$ and $k$.

We now finish the proof of part \eqref{second-gammatoinfty}. Let us use the notation $A^{\gamma}(x_0)=A(x_0)$ for the normalization term in \eqref{Adefnew} corresponding to $\gamma$ and recall the notation $T^\gamma_{x_0}=T_{x_0}$ from Proposition \ref{prop-gammatoinfty}. It is clear from the Laplace method \cite[Section 4.3]{DZ98} that we have
\[ A^\gamma(x_0) = \e^{\gamma \m_{\kmax} (x_0)+o(\gamma)} \quad \text{as }\gamma \to \infty.\]
For any $(x_1,\ldots,x_{k-1}) \in D_k^\varepsilon$, using \eqref{nukminimizerbeta=0} and \eqref{Depsilonmax}, we can estimate
\[ T_{x_0}^\gamma(k, x_1,\ldots,x_{k-1}) = \frac{\nu_k^\gamma(\d x_0,\ldots,\d x_{k-1})}{\mu(\d x_0) \mu(\d x_1) \ldots \mu(\d x_{k-1})} \leq \e^{ \gamma \m_{\kmax}(x_0)-\gamma \m_{\max}^\varepsilon(x_0)+o(\gamma)} \leq \e^{ o(\gamma)-\gamma \delta(\varepsilon)}.  \]
We conclude \eqref{fixedx0k} (with $\kappa>0$ being independent of $x_0 \in W$ and $k \in [\kmax]$). Thus, part \eqref{second-gammatoinfty} of Proposition \ref{prop-gammatoinfty} follows.

\section{High local density of users} \label{sec-largea}
This section describes the behaviour of the system in regime~\eqref{regime3}, i.e., in the limit of a high local density of users in a subset of the communication area. We explain both global and local aspects of this limit, respectively in Section~\ref{sec-global} and Section~\ref{sec-local}. 

We consider the following question about the behaviour of our model given by \eqref{nukminimizerbeta=0}, assuming always that $\kmax \geq 2$. 
\begin{quote} Does the density of trajectories increase unboundedly in a densely populated subarea, or do the messages avoid such an area for the sake of having lower interference? 
\end{quote} 


In order to give substance to this question, we replace our user density measure $\mu$ by 
\begin{equation}
\mu^a= \mu+a \Leb|_\Delta\in\Mcal(W),\qquad a\in(0,\infty),
\end{equation}
where $\Leb|_\Delta$ is the Lebesgue measure concentrated on a compact set $\Delta\subseteq W$, seen as a measure on $W$, where we assume that $\Leb(\Delta)>0$. We think of $\Delta$ as of a set of very high concentration of users and will consider the behaviour of the optimal path trajectory in the limit $a\to\infty$. We will from now on label all objects that depend on $\mu^a$ instead of $\mu$ with the index $a$. 
We will study the measure 
\begin{equation}
M^a=\sum_{k=1}^{\kmax}\sum_{l=1}^{k-1} \pi_l \nu_{k}^a,
\end{equation}
where $\nu_k^a$ is defined according to \eqref{nukminimizerbeta=0}. It can be interpreted as the measure of all the incoming hops at a given location (see also Section~\ref{sec-typicaltrajectory}). Note that the total mass $M^a(W)$ is zero if all messages go directly to the base station without any relaying hop; hence it is a measure for the total amount of relaying hops. Explicitly, we have
\begin{equation} 
\label{MFormula}
M^a(\d x)  = \mu^a(\d x) \int_{W} \mu^a(\d x_0) \frac{\sum_{k=1}^{\kmax} \sum_{l=1}^{k-1} \int_{W^{k-2}} \prod_{l' \in [k-1] \setminus \{ l \}} \mu^a(\d x_{l'}) \,\e^{ -\gamma \sum_{l'=1}^{k} g^a(x_{l'-1},x_{l'}) \big|_{x_l=x} } }
{\sum_{k=1}^{\kmax} \int_{W^{k-1}} \prod_{l=1}^{k-1} \mu^a(\d x_{l}) \e^{-\gamma \sum_{l=1}^{k-1} g^a(x_{l-1},x_l) }}. 
\end{equation}
Now we are interested in the behaviour of the measure $M^a$ as $a \to \infty$. 
Since $(x,y)\mapsto \ell(|x-y|)$ is bounded away from 0 on $W\times W$, we first note that the large-$a$ behaviour of the interference term is given by
\[ 
\lim_{a \to \infty} \frac{1}{a} g^a(x,y)=\frac{\int_\Delta \d z\, \ell(|y-z|) }{\ell(|x-y|)}=:g_\Delta(x,y), \quad x,y \in W. \numberthis\label{gdeltadef}
\] 
The limiting function $g_\Delta$ measures the interference only in relation with the interference coming from $\Delta$. This ratio will turn out to be relevant and the effective interference term in the limit $a\to\infty$.

\subsection{Global effects}\label{sec-global} Our first result is that, when the path-loss function $(x,y) \mapsto \ell(|x-y|)$ does not vary much on $W \times W$, the presence of the highly dense area $\Delta$ has a strongly repellent effect everywhere in the system and suppresses all the relaying hops; indeed, the total mass of the measure $M^a$ tends to zero exponentially fast as $a\to\infty$, under our general assumptions on the path-loss function $\ell$. 

\begin{prop}[Criterion for exponential decay of the amount of relays] \label{prop-Ma(W)tozero}
We have \[ \sup_{x \in W} \limsup_{a \to \infty}  \frac{1}{a} \log \frac{\d M^a}{\d \Leb}(x) < 0 \numberthis\label{afullexpdecay} \]
if and only if
 \[ \min_{x_0\in W} \Big[\min_{x_1 \in W} \big(g_{\Delta}(x_0,x_1)+g_{\Delta}(x_1,o)\big)-g_{\Delta}(x_0,o)\Big]>0. \numberthis\label{twohop} \] 
\end{prop}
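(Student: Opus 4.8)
The plan is to exploit the \emph{exact} identity $g^a(x,y)=g(x,y)+a\,g_\Delta(x,y)$, which is immediate from $\mu^a=\mu+a\Leb|_\Delta$ together with \eqref{gdef} and \eqref{gdeltadef}. Since $g$ is continuous and bounded on $W\times W$, both away from $0$ and from $\infty$ (using $\ell(0)<\infty$ and $\ell(\diam W)>0$), and since $\mu^a(W)$, the number of summands in \eqref{MFormula} (at most of order $\kmax^2$), and the density $\tfrac{\d\mu^a}{\d\Leb}$ all grow at most linearly in $a$, every factor $e^{-\gamma\sum_l g(x_{l-1},x_l)}\in[e^{-\gamma C},1]$, every power $\mu^a(W)^{k-1}$ with $k\le\kmax$, and the prefactor $\tfrac{\d\mu^a}{\d\Leb}(x)$ contributes only $e^{o(a)}$. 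Hence the whole problem reduces to controlling the exponent $a\gamma\sum_l g_\Delta(x_{l-1},x_l)$ in \eqref{MFormula}, uniformly in the relay positions and in $x,x_0$.

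The combinatorial heart of the proof is the claim that \eqref{twohop} is \emph{equivalent} to the existence of some $\delta>0$ with
\[ \sum_{l=1}^{k}g_\Delta(x_{l-1},x_l)\ \geq\ g_\Delta(x_0,o)+\delta\qquad\text{for all }k\geq 2,\ x_0,\ldots,x_{k-1}\in W,\ x_k=o. \]
The direction $\Leftarrow$ is the case $k=2$; the direction $\Rightarrow$ is a short induction on $k$. For $k\ge 3$, apply the inductive hypothesis to the $(k-1)$-hop tail $x_1\to x_2\to\cdots\to o$ to get $\sum_{l=2}^{k}g_\Delta(x_{l-1},x_l)\geq g_\Delta(x_1,o)$, and then use the two-hop bound \eqref{twohop} at $x_0$ on $g_\Delta(x_0,x_1)+g_\Delta(x_1,o)$. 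The uniformity of $\delta$ over $x_0\in W$ is exactly what makes the induction close, and compactness of $W$ with continuity of $g_\Delta$ guarantees that \eqref{twohop}, being a strict inequality of a minimum, holds with a uniform gap.

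For the ``if'' direction I would then argue directly. Every summand in the numerator of \eqref{MFormula} corresponds to a trajectory from $x_0$ to $o$ with \emph{at least two} hops that passes through $x$, so by the displayed bound and $g\ge 0$ its weight is at most $e^{o(a)}e^{-\gamma a(g_\Delta(x_0,o)+\delta)}$, uniformly in $x$. The denominator is bounded below by its $k=1$ term, $e^{-\gamma g^a(x_0,o)}=e^{-\gamma g(x_0,o)}e^{-\gamma a g_\Delta(x_0,o)}\geq c\,e^{-\gamma a g_\Delta(x_0,o)}$ with $c=e^{-\gamma\sup_{W\times W}g}>0$. Dividing, integrating $\mu^a(\d x_0)$ and multiplying by $\tfrac{\d\mu^a}{\d\Leb}(x)$ — all $e^{o(a)}$ — gives $\tfrac{\d M^a}{\d\Leb}(x)\le e^{o(a)}e^{-\gamma a\delta}$ uniformly in $x\in W$, which is \eqref{afullexpdecay}.

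The ``only if'' direction I would prove by contraposition. If \eqref{twohop} fails, compactness yields $x_0^\ast,x_1^\ast\in W$ with $g_\Delta(x_0^\ast,x_1^\ast)+g_\Delta(x_1^\ast,o)\le g_\Delta(x_0^\ast,o)$, hence the $g_\Delta$-cheapest route among $\le\kmax$-hop routes from $x_0^\ast$ uses at least one relay; fix such a (near-)optimal route, pick $x$ to be a point arbitrarily close to one of its relays, bound the numerator of \eqref{MFormula} from below by the single summand for this route with that relay pinned at $x$ and the remaining relays integrated over small balls, and bound the denominator from above by $e^{o(a)}e^{-\gamma a\,\m_\Delta(x_0)}$ with $\m_\Delta(x_0)=\min_{1\le k\le\kmax}\min_{x_1,\ldots,x_{k-1}}\sum_{l=1}^{k}g_\Delta(x_{l-1},x_l)$; the $a$-exponents cancel up to $o(a)$, and $\limsup_{a}\tfrac1a\log\tfrac{\d M^a}{\d\Leb}(x)\ge 0$ follows. \textbf{The main obstacle} is to make this last step fully rigorous: one must (i) choose $x_0^\ast$ inside the support of $\mu^a$ and $x$ at a point of positive Lebesgue density of $\mu^a$, so that the restricted integrals are genuinely positive — arrangeable via openness of $\{(x_0,x_1):g_\Delta(x_0,x_1)+g_\Delta(x_1,o)<g_\Delta(x_0,o)\}$ in the strict case and a perturbation/continuity argument in the boundary case $\min_{x_0}[\cdots]=0$ — and (ii) choose this point \emph{independently of} $a$, so that ``$\sup_x\limsup_a$'' in \eqref{afullexpdecay} is actually defeated and not merely ``$\limsup_a\sup_x$''. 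Everything beyond these points is a routine Laplace-type estimate in $a$.
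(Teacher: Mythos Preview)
Your proposal is correct and follows essentially the same route as the paper. Both reduce everything to Laplace asymptotics in $a$ (the paper invokes the Laplace method as a black box to obtain a closed formula for $\lim_a\frac1a\log\frac{\d M^a}{\d\Leb}(x)$, whereas you do the equivalent upper/lower estimates by hand), and both then prove the same combinatorial equivalence: your induction on $k$ establishing $\sum_{l=1}^k g_\Delta(x_{l-1},x_l)\ge g_\Delta(x_0,o)+\delta$ for all $k\ge 2$ is the contrapositive of the paper's ``minimal $k$'' argument, which starts from a $k$-hop path with $\sum_l g_\Delta\le g_\Delta(x_0,o)$ and extracts a violating two-hop pair $(x_{k-2},x_{k-1})$.

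The one organizational advantage of the paper's version is that it dissolves the obstacle you flag in the ``only if'' direction. By first computing, for \emph{each fixed} $x$, the exact value of $\lim_a\frac1a\log\frac{\d M^a}{\d\Leb}(x)$ as a continuous function of $x$ (your identity $g^a=g+a g_\Delta$ makes this a genuine limit, not just a $\limsup$), the paper can then simply take $\sup_x$ of this function and check whether it is negative; no specific $x_0^\ast,x_1^\ast$ need to be placed inside the support of $\mu^a$, and the order ``$\sup_x\limsup_a$'' versus ``$\limsup_a\sup_x$'' never arises. Your constructive approach works too, but the paper's packaging avoids the bookkeeping you anticipated.
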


\begin{remark}\label{rem-Ma(W)}
\begin{enumerate}[(i)]
\item The inequality \eqref{afullexpdecay} implies an exponential decay of the total mass of $M^a$, i.e., 
\[ \limsup_{a \to \infty} \frac{1}{a} \log M^a(W) < 0. \]

\item Since $\mu^a$ is clearly subexponential in  $a \to \infty$,  \eqref{afullexpdecay} is equivalent to a uniform exponential decay of the Radon--Nikodym derivative of $M^a$ with respect to $\mu^a$ instead of $\Leb|_W$.

\item The condition in \eqref{twohop} says that the effective interference penalty for a two-hop trajectory is uniformly worse than the one of a direct hop to the origin. This condition involves only one- and two-hop trajectories and is valid even when $\kmax$ is much larger than 2. 

\item Multiplying by two of the three denominators in \eqref{twohop} and using that the map $W\times W\ni(x,y)\mapsto \ell(|x-y|)$ is bounded and bounded away from zero, we easily see that \eqref{twohop} holds if and only if 
\[  \min_{x_0,x_1 \in W} \Big[  \ell(|x_1|) \int_{\Delta} \ell(|z-x_1|) \,\d z   + \ell(|x_0-x_1|)  \int_{\Delta} \ell(|z|)\,\d z - \frac{\ell(|x_1|) \ell(|x_0-x_1|)}{\ell(|x_0|)} \int_{\Delta} \ell(|z|)\d z \Big]  > 0. \numberthis\label{allnumerator} \]

\item A sufficient condition for \eqref{twohop} to hold is as follows. 
Let $p \in (0,1]$ be such that $p \ellmax= \ellmin$, where 
$\ellmax=\max_{x,y\in W}\ell(|x-y|)$ and $\ellmin=\min_{x,y\in W}\ell(|x-y|)$ are the maximal and the minimal path-loss values in the system, respectively. Then, a lower bound for the left-hand side of \eqref{allnumerator} is $\ellmax^2 \Leb(\Delta)(2 p^2-\frac{1}{p})$. 
This is positive as long as $p$ is larger than $2^{-1/3} \approx 0.794$.

Similarly, an upper bound on the left-hand side of \eqref{allnumerator} in terms of $p$ is $\ellmax^2\Leb(\Delta)(2-p^3)$, but this is larger than zero for all $p \in (0,1]$, hence such a general estimate cannot be used for disproving \eqref{allnumerator} in any case.
\item In our numerical results in Examples~\ref{example-1dim} and \ref{example-2dim} with $W=\Delta$, the condition~\eqref{twohop} does not hold.
\end{enumerate}
\end{remark}
\noindent\emph{Proof of Proposition~\ref{prop-Ma(W)tozero}.}
Consider the quantity on the left-hand side of \eqref{afullexpdecay}. Taking the limit $a \to \infty$, we obtain for fixed $x,x_0 \in W$ for the numerator of \eqref{MFormula}
\begin{equation}\label{numeratorlargemu}
\begin{aligned}
\lim_{a \to \infty} \frac{1}{a} &\log \Big[ \sum_{k=1}^{\kmax} \sum_{l=1}^{k-1} \int_{W^{k-2}} \prod_{l' \in [k-1] \setminus \{ l \}} \mu^a(\d x_{l'}) \exp \Big( -\gamma \sum_{l'=1}^{k} g^a(x_{l'-1},x_{l'})\Big|_{x_l=x} \Big) \Big] \\ 
&=  -\gamma \min_{k \in [\kmax]\setminus\{1\}} \min_{l \in [k-1]} \min_{x_1,\ldots,x_{l-1},x_{l+1},\ldots,x_{k-1} \in W}  \sum_{l'=1}^{k} g_{\Delta}(x_{l'-1},x_{l'}) \Big|_{x_l=x}.
\end{aligned}
\end{equation} 
On the other hand, for the denominator of \eqref{MFormula} for $x_0$ fixed, we have
\begin{equation}\label{denominatorlargemu} 
\begin{aligned}
\lim_{a \to \infty} \frac{1}{a} &\log \Big[ \sum_{k=1}^{\kmax} \int_{W^{k-1}} \prod_{l=1}^{k-1} \mu^a(\d x_l) \,\exp \Big( -\gamma \sum_{l=1}^{k-1} g^a(x_{l-1},x_l) \Big)\Big] \\ 
&=  -\gamma \min_{k \in [\kmax]}  \min_{x_1,\ldots,x_{k-1} \in W} \sum_{l=1}^{k} g_{\Delta}(x_{l-1},x_l) .
\end{aligned}
\end{equation} 

These two assertions follow from the Laplace method \cite[Section 4.3]{DZ98} in a standard way, since the $a$-dependence of the integrating measure $\mu^a$ is clearly subexponential. Hence, we obtain that
\begin{equation} \label{kxminimization}
\begin{aligned}
\lim_{a\to\infty}\frac 1a\log M^a(\d x)=-\gamma\min_{x_0\in W}\Big[&\min_{k \in [\kmax]\setminus\{1\}} \min_{l \in [k-1]} \min_{x_1,\ldots,x_{l-1},x_{l+1},\ldots,x_{k-1} \in W}  \sum_{l'=1}^{k} g_{\Delta}(x_{l'-1},x_{l'}) \Big|_{x_l=x}\\
&-\min_{k \in [\kmax]}  \min_{x_1,\ldots,x_{k-1} \in W} \sum_{l=1}^{k} g_{\Delta}(x_{l-1},x_l)\Big].
\end{aligned}
\end{equation}

Note that after taking supremum over $x\in W$ on the right-hand side of \eqref{kxminimization}, we obtain a negative number if and only if 
\begin{equation}\label{contradiction}
\min_{x_0 \in W} \Big[ \min_{k \in [\kmax] \setminus \{ 1 \} } \min_{x_1,\dots,x_{k-1} \in W}\sum_{l=1}^k g_{\Delta}(x_{l-1},x_l)-g_{\Delta}(x_0,o) \Big] > 0. 
\end{equation}
Now, assume that the condition \eqref{twohop} does not hold. Then we may pick $x_0',x_1' \in W$ with $(g_{\Delta}(x_0',x_1')+g_{\Delta}(x_1',o)-g_{\Delta}(x_0',o)) \leq 0$. But this implies that \eqref{contradiction} is false, as is shown by taking $k=2$, $x_0=x_0'$ and $x=x_1'$. We conclude that \eqref{afullexpdecay} does not hold.


Conversely, let us assume that \eqref{afullexpdecay} is not satisfied and let us conclude that \eqref{twohop} also does not hold. Using \eqref{afullexpdecay} and \eqref{kxminimization}, we can choose $x_0 \in W$, $k \in [\kmax] \setminus \{ 1 \}$ and $x_1,\ldots,x_{k-1} \in W$ such that 
\[
\sum_{l=1}^{k} g_{\Delta}(x_{l-1},x_l)\leq g_{\Delta}(x_0,o), \qquad x_k=o. \numberthis\label{directnotgood} 
\] 
Let $k$ be minimal for $x_0$ with this property. We show that there exists $x_0' ,x_1' \in W$ such that $g_{\Delta}(x_0',x_1')+g_{\Delta}(x_1',o)\leq g_{\Delta}(x_0',o)$, therefore \eqref{twohop} does not hold. Indeed, if this is not the case for $x_0'=x_{k-2}$ and $x_1'=x_{k-1}$, then we have
\[ 
\sum_{l=1}^{k-2} g_{\Delta}(x_{l-1},x_l) + g_{\Delta}(x_{k-2},o) \leq  \sum_{l=1}^{k} g_{\Delta}(x_{l-1},x_l) \leq g_{\Delta}(x_0,o) < \sum_{l=1}^{k-2} g_{\Delta}(x_{l-1},x_l)+ g_{\Delta}(x_{k-2},o), \label{direct-2} 
\] 
where in the last step we used the minimality of $k$ for $x_0$. This is in contradiction with \eqref{twohop} and thus the proof is concluded.
\ProofEnde


\subsection{Local effects}\label{sec-local} The condition~\eqref{twohop} can be applied to any $\Delta \subseteq W$ with $\Leb(\Delta)>0$, in particular also to $\Delta=W$. In this sense, Proposition~\ref{prop-Ma(W)tozero} is non-spatial. We now discuss among what conditions the spatial effect that the quality of service (interference penalization with interference coming only from $\Delta$) is significantly worse for messages relaying through a neighbourhood of $\Delta$ than through an area sufficiently far away from $\Delta$ occurs in our model. For simplicity, we consider only the case  $\kmax=2$, a very small set $\Delta$ and a special choice of the path-loss function. We will give arguments that suggest that, for any large $a$, it is strictly suboptimal to relay through a neighbourhood of $\Delta$ as opposed to circumventing $\Delta$ sufficiently far.

Analogously to \eqref{numeratorlargemu}--\eqref{denominatorlargemu}, the large-$a$ limit for the mass of all relaying hops from $x_0$ into a set $A\subset W$ (assumed being equal to the closure of its interior) and further to $o$ is given by
\[ -\lim_{a \to \infty} \frac{1}{a} \log T^a_{x_0}(2,A)= \gamma \Big[\Xi_{x_0}(A) -\min \big\{g_{\Delta}(x_0,o) ,\Xi_{x_0}(W)\big\} \Big], \numberthis\label{minimumspeltout} \]
where 
$$
\Xi_{x_0}(A)=\min_{x_1 \in  A} [g_{\Delta}(x_0,x_1)+g_{\Delta}(x_1,o)].
$$
We want to discuss under what conditions $\Xi_{x_0}(A)$ is smaller for sets $A$ that are bounded away from $\Delta$ than for $A$ being a neighbourhood of $\Delta$. For simplicity, let us do that for $W=\R^d$ and very small sets $\Delta=B_r(y_0)$ with $r\ll 1$ only, i.e., we approximate 
\begin{equation}\label{appr}
g_\Delta(x,y)\approx |\Delta| \frac{\ell(|y-y_0|)}{\ell (|y-x|)},\qquad x,y\in\R^d.
\end{equation}
Hence, we will put $\Delta=\{y_0\}$ and discuss the function
$$
f_{x_0,y_0}(\eps)=\min_{x_1 \in W\colon |x_1-y_0|= \eps} \Big[\frac{\ell(|x_1-y_0|)}
{\ell(|x_0-x_1|)} + \frac{\ell(|y_0|)}{\ell(|x_1|)}\Big],\qquad\eps\geq 0.
$$
This is an approximation of $\Xi_{x_0}(\partial B_\eps(y_0))$. We will see that, under quite general conditions, $f_{x_0,y_0}(\eps)<f_{x_0,y_0}(0)$ for all  $\eps\in[0,\eps_0]$ for some $\eps_0>0$. This means that, for all sufficiently large $a$, the probability weight for trajectories $x_0\to B_{\eps_0-\delta}(y_0)\to o$ is exponentially smaller than the one for trajectories $x_0\to B_{\eps_0}(y_0)^{\rm c}\to o$ for any $\varepsilon_0 > \delta>0$. 

To do this, use the triangle inequality and the monotonicity of $\ell$ to see that
$$
f_{x_0,y_0}(\eps)\leq \widetilde f_{x_0,y_0}(\eps):=\frac{\ell(\eps)}{\ell (|x_0-y_0|+\eps)}+\frac{\ell(|y_0|)}{\ell(|y_0|+\eps)}.
$$
Note that $\widetilde f_{x_0,y_0}(0)= f_{x_0,y_0}(0)$ and that 
$$
\widetilde f'_{x_0,y_0}(0)=\frac{\ell'(0)}{\ell(|x_0-y_0|)}-\frac{\ell(0)\ell'(|x_0-y_0|)}{\ell(|x_0-y_0|)^2}-\frac{\ell'(|y_0|)}{\ell(|y_0|)}.
$$
Note that for the choice $\ell(r)=(1+r)^{-\alpha}$ for some $\alpha>0$, this is negative as soon as $|x_0-y_0| (1+|x_0-y_0|)^{\alpha-1}>(1+|y_0|)^{-1}$, i.e., as soon as $y_0$ is sufficiently far away from $o$, given the distance of the transmission site $x_0$ from $y_0$. This proves the announced conclusion that a two-hop transmission from $x_0$ to the origin is strictly not optimal if the relaying hop uses a neighbourhood of $y_0$; here we used no information about the spatial relation of the three sites $x_0$, $y_0$ and $o$, but the fact that $\ell'(0)<0$. However, for the path-loss function $\ell(r)=\min\{1,r^{-\alpha}\}$, this argument does not work, since $\widetilde f'_{x_0,y_0}(0)>0$ (because $\ell'(0)=0$).

\section{Modelling discussions and conclusions}\label{sec-conclusion}

In this section, we explain our motivation for some aspects of the model and for the questions that we address. We discuss the notion of SIR, its adaptation to the high-density setting, and the effect of boundedness of the path-loss function in Section~\ref{sec-SIRdefdiscussion}. Afterwards, in Section~\ref{sec-extensions}, we comment on possible extensions of the model involving a strict SIR threshold, users sending no message or multiple messages, or time dependence. Finally, in Section~\ref{sec-GibbsVSoptimization} we provide further details and a discussion of our Gibbsian ansatz, in particular we present the version of the model where also congestion is penalized.


\subsection{The notion of SIR and its adaptation to the high-density setting }\label{sec-SIRdefdiscussion}
Note that the conventional definition of interference of a transmission from $X_i$ to $x$ is $ \sum_{j \in I^\lambda \setminus \lbrace i \rbrace} \ell(|X_j-x|)$, in contrast to our definition in \eqref{SIR}, where we added a factor of $\frac 1\lambda$, following \cite[Section 1]{HJKP15}. According to this convention, we should say ``total received power" instead of ``interference", cf. \cite[Section II.]{KB14}. As we are interested in the limit $\lambda \to \infty$, where it makes no difference whether or not we add $\frac{1}{\lambda} \ell(|X_i-x|)$ to the denominator, we stick to our notions ``SIR" and ``interference". For the same reason, our model does not include noise. However, note also our additional factor of $1/\lambda$, which we think is appropriate, at least mathematically, to our setting, in which we consider the high-density limit $\lambda\to\infty$. We actually scale the ``usual'' SIR by the density parameter. Indeed, in order to cope with an enormous number of messages in a system with 
one base station 
and a fixed bandwidth, one can either distribute the messages over a longer time stretch or decompose the messages into many smaller ones. The factor of $1/\lambda$ is a crude approximation of a combination of these two strategies. 

The assumption that the path-loss function $\ell$ is continuous at 0 comes from \cite{GT08, HJKP15} and differs from the works \cite{GK00,KB14}, which make mathematical use of the perfect scaling of the path-loss function $\ell(r)=r^{-\alpha}$, which is for this reason one of the standard choices. However, for small $r$, this is an unrealistic choice, cf.~\cite[Section I.A]{GK00}, \cite[Section I.]{GT08}. Let us also note that in case of an unbounded path-loss function, already the law of large numbers of Proposition~\ref{prop-variationbeta=0} may be wrong; indeed, it might be necessary to drop the factor of $1/\lambda$ in the denominator of our definition \eqref{SIR} of the SIR and to take $\kmax=\infty$ in order to obtain an interesting result instead. See~\cite{GK00, FDTsT07} for further details. 

\subsection{Extensions of the Gibbsian model}\label{sec-extensions}
\subsubsection{Strict SIR threshold}\label{sec-SIRthreshold} 
In a mathematical description of a telecommunication system, one typically requires that the SIR be larger than a given threshold $\tau>0$, in order that the signal can be successfully transmitted. One can also modify our Gibbs distribution in such a way that trajectories exhibiting hops with $\SIR(s^i_{l-1},s^i_l,X^\lambda)$ less than or equal to $\tau$ have probability zero, simply by changing the interference penalization value \eqref{Pinotation} to $\infty$ for such families, similarly to \cite[Section III.A]{BC12}. For $\tau$ small enough, almost surely, the modified model is well-posed for all $\lambda>0$ sufficiently large \cite[Section 5.2.3]{T18}. This means a change from the penalization function $x\mapsto \gamma/x$ (applied to $\SIR(s^i_{l-1},s^i_l,X^\lambda)$) into the function $x\mapsto \infty\times \1_{[0,\tau]}(x)$. We expect that an analogue of Proposition~\ref{prop-variationbeta=0} in \cite[Section 5]{KT17} is valid, but additional topological problems have to be addressed.

\subsubsection{Sending no or multiple messages}\label{sec-multiplemessages}

One easily sees from the proofs in \cite[Sections 2--5]{KT17} that Proposition~\ref{prop-variationbeta=0} can be extended to the situation where users send no message or multiple messages. This models the standard situation in which large messages are cut into many smaller ones, who independently find their ways through the system.

For this, we have to enlarge the trajectory probability space: to each user $X_i \in X^\lambda$, we attach the number $P_i\in\N_0$ of transmitted messages, and for each $j\in\{1,\dots,P_i\}$, there is an independent trajectory $X_i\to o$. The empirical trajectory measure $R_{\lambda,k}(\cdot)$ must be augmented by these trajectories. The main additional assumption then is that $\sum_{k=1}^{\kmax} \pi_0 R_{\lambda,k}(\cdot)$ converges to some measure $\mu_0 \in \mathcal{M}(W)$ with $0 \neq \mu_0 \ll \mu$. According to \cite[Sections 2.3.1, 5.1]{BB09}, the SIR of the transmission of one of these messages from $X_i$ to $x \in W$ should be defined as follows
\[ \SIR(X_i,~x,~((X_j,P_j))_{j \in I^\lambda})=\frac{\ell(|X_i-x|)}{\frac{1}{\lambda} \sum_{j \in I^\lambda} \ell(|X_j-x|)P_j}. \]
One could also incorporate (possibly random) sizes of the messages, which would require an additional enlargement of the trajectory space.


\subsubsection{Time-dependent versions of the Gibbsian model}\label{sec-timedependence}
We note that the notion of interference can be made more realistic according to \cite[Section I.A]{GK00} via introducing time dependence in our model. E.g., one introduces $\kmax$ discrete time slots indexed by $[\kmax]$, and for $l \in [\kmax]$, the $l$th hop of any message trajectory is assumed to happen at time $l$. Then, the interference of a transmission at time $l$ is obtained from the starting points of all hops that happen at the same time. The SIR is defined analogously to \eqref{SIR} but with this notion of interference, which depends on the entire message trajectories rather than only on the users. Time-dependent versions of our model can be set up in various ways; for example, one could allow for messages standing still or for a longer time horizon and users transmitting multiple messages over time. The new notion of SIR comes with significant changes in the behaviour of the system in the high-density limit, and we decided to defer such investigations to a later work.

\subsection{Further details and discussion of our Gibbsian ansatz}\label{sec-GibbsVSoptimization}

In Section~\ref{sec-modeldef} we introduced the special case of the model of \cite{KT17} that is most amenable for analytical investigations, i.e., the one where only interference is penalized and congestion is not, and we considered this setting in Sections~\ref{sec-largespace}--\ref{sec-largea}. 
The Gibbs distribution takes a product form \eqref{Gibbsdistribution}--\eqref{zed}, i.e., message trajectories are independent. Penalizing congestion introduces interaction between different message trajectories, and some interesting properties of our model hold only in presence of this term. 

In Section~\ref{sec-alsocongestion}, we introduce the Gibbsian model in case congestion is also penalized. Then, we explain the form of our interference penalization (i.e., the use of $1/\SIR$) in Section~\ref{sec-SIRpenaltydiscussion}. In Section~\ref{sec-varformula} we remind on the characterization \cite[Proposition 1.5, part (3)]{KT17} of the limiting trajectory measures \eqref{nukminimizerbeta=0}--\eqref{Adefnew} as the unique minimizer of a characteristic variational formula.

If congestion is also penalized, the dependencies between different trajectories make the numerical simulation of the Gibbs distribution a different task. In Section~\ref{sec-MCMC} we suggest an approximate solution for this problem using stochastic algorithms, which relates the Gibbs distribution to the optimization of the sum of the interference term and the congestion term, providing additional motivation to our model. Later, in Section~\ref{sec-gametheory}, we analyse this optimization problem in the light of traffic theory.


\subsubsection{Definition of the Gibbsian model with congestion penalization}\label{sec-alsocongestion}
Recall from Section~\ref{sec-modeldef} that elements of $\mathcal S_{\kmax}(X^\lambda)$, i.e., admissible message trajectory configurations, are denoted as $s=(s^i)_{i \in I^\lambda}$. For $i \in I^\lambda$ and $l \in \{ 0,1,\ldots,|s^i| \}$, we write $s^i_l$ for the $l$th coordinate of the trajectory $s^i$ (in particular, the 0th coordinate equals the transmitter $X_i$ and the last coordinate equals the receiver $o$). First, we need an alternative notation for interference penalization. For $s \in \Scal_{\kmax}(X^\lambda)$, we define
\[ \mathfrak S(s) = \sum_{i \in I^\lambda} \sum_{l=1}^{|s^i|} \SIR(s^i_{l-1},s^i_l,X^\lambda)^{-1}. \numberthis\label{realSIRenergy} \]
Next, we introduce congestion. For $s \in \Scal_{\kmax}(X^\lambda)$ we put
\begin{equation}
m_i(s)=\sum_{j \in I^\lambda} \sum_{l=1}^{|s^j|-1} \mathds 1 \lbrace s^j_l=X_i \rbrace, \quad i\in I^\lambda,
\end{equation}
as the number of times user $X_i$ is used as a relay in the trajectory collection $s$, and we define
\[ \mathfrak M(s) = \sum_{i \in I^\lambda} m_i(s)(m_i(s)-1). \numberthis\label{realMenergy} \]
Note that $m_i(s)(m_i(s)-1)$ is the number of ordered pairs of hops arriving at the relay $X_i$, and $m_i(s)(m_i(s)-1)=0$  if $m_i(s) \in \{ 0,1 \}$, i.e., $\mathfrak M(\cdot)$ only counts pairs of hops arriving at the same relay. (We note that in a time dependent setting, the correct analogue of $\mathfrak M(\cdot)$ would only count pairs of hops arriving \emph{simultaneously} at the same relay.)

Now, we fix $\gamma>0$ and $\beta \geq 0$, and for any $s=(s^i)_{i \in I^\lambda}\in \mathcal{S}_{\kmax}(X^\lambda)$ we put 
\begin{equation}\label{realGibbsdistribution}
\mathrm P^{\gamma,\beta}_{\lambda,X^\lambda}(s) := \frac1{Z^{\gamma,\beta}_\lambda(X^\lambda)}
\Big(\prod_{i\in I^\lambda}  \frac1{N(\lambda)^{|s^i|-1}}\Big) \exp\Big\{-\gamma \mathfrak S(s)-\beta \mathfrak M(s) \Big\}.
\end{equation}
This is the Gibbs distribution with a uniform and independent \emph{a priori} measure (see~\cite[Section 1.2.2]{KT17} for details), subject to an exponential weight with the interference term in \eqref{realSIRenergy} and the congestion term in \eqref{realMenergy}. Here
\[ Z^{\gamma,\beta}_\lambda(X^\lambda)=\sum_{r \in \mathcal{S}_{\kmax}(X^\lambda)}  \Big(\prod_{i\in I^\lambda}\frac1{N(\lambda)^{ |r^i|-1}}\Big) \exp\Big\{-\gamma \mathfrak S(r)-\beta \mathfrak M(r)\Big\} \numberthis\label{realzed} \]
is the normalizing constant, also referred to as \emph{partition function}. Note that $\mathrm P^{\gamma,\beta}_{\lambda, X^\lambda}(\cdot)$ is random and defined conditional on $X^\lambda$, and it is a probability measure on $\mathcal{S}_{\kmax}(X^\lambda)$. Note further that for $\beta=0$, i.e., in case congestion is not penalized, $\mathrm P^{\gamma,0}_{\lambda,X^\lambda}$ and $Z^{\gamma,0}_\lambda(X^\lambda)$ defined in \eqref{realGibbsdistribution}--\eqref{realzed} equal $\mathrm P^{\gamma}_{\lambda,X^\lambda}$ respectively $Z^{\gamma}_\lambda(X^\lambda)$ defined in \eqref{Gibbsdistribution}--\eqref{zed}.

\subsubsection{The interference term}\label{sec-SIRpenaltydiscussion}
The interference term $\mathfrak S(s)$ in \eqref{realSIRenergy} (see also \eqref{Pinotation}--\eqref{Gibbsdistribution}) quantifies the quality of the transmission of the messages in case they use the trajectories $s^i$ from $X_i$ to $o$. 
The choice of the {\em reciprocals} of the SIRs comes from the fact that the \emph{bandwidth} used for a transmission is defined \cite{SPW07} as 
\[ \frac{\varrho}{\log_2 (1+ \mathrm{SIR}(\cdot))}, 
 \numberthis\label{bandwidth}\]
where $\varrho$ is the data transmission rate, and $\mathrm{SIR}$ is defined as in \eqref{SIR} without the factor of $1/\lambda$ in the denominator of \eqref{SIR}. This quantity is of order $1/\lambda$ for $\lambda$ large, under the assumption that $L_\lambda \Rightarrow \mu$. In the high-density setting $\lambda\to\infty$ that we study, \eqref{bandwidth} can be approached well by (a constant times) the reciprocals of the SIR, since $\log(1+x) \sim x$ as $x \to 0$.
\cite[Section 3]{SPW07} suggests that in case of multihop communication, the used bandwidth equals the sum of the used bandwidth values corresponding to the individual hops, which explains our choice of the sum over $l$ in \eqref{realSIRenergy}. We note that the idea of using a sum of reciprocals of SIR values as a cost function to be minimized appeared also in \cite{BC12}.

\subsubsection{The limiting trajectory distribution as the minimizer of a variational formula}\label{sec-varformula}

We only explain the case where congestion is not penalized, since this corresponds to Proposition~\ref{prop-variationbeta=0}. The analogue of this result with congestion is formulated in \cite[Theorem 1.2, Proposition 1.3]{KT17}.

Note that the interference penalty term \eqref{realSIRenergy} can be expressed in terms of $(R_{\lambda,k}(s))_{k\in [\kmax]}$ as follows
\[ \mathfrak S(s) = \sum_{k=1}^{\kmax}\int_{W} R_{\lambda,k}(s)(\d x_0,\ldots,\d x_{k-1})\sum_{l=1}^k \frac{\int_{W} \ell(|y-x_l|) L_\lambda(\d y)}{\ell(|x_{l-1}-x_l|)}, \quad x_k=o. \numberthis\label{SIRenergyexpressed} \]
Recall that $S$ denotes a random variable with distribution $\mathrm P^{\gamma,0}_{\lambda,X^\lambda}=\mathrm P^{\gamma}_{\lambda,X^\lambda}$ and that subsequential limits of $(R_{\lambda,k}(S))_{k \in [\kmax]}$ as $\lambda \to \infty$ are families of measures $\Sigma=(\nu_k)_{k \in [\kmax]}$ with $\nu_k \in \Mcal(W^k)$ satisfying \eqref{i-beta=0}. For such $\Sigma$ we define an analogue of \eqref{SIRenergyexpressed} as follows
\[ \mathrm S(\Sigma) =  \sum_{k=1}^{\kmax}\int_{W}\nu_k(\d x_0,\ldots,\d x_{k-1})\sum_{l=1}^k  g(x_{l-1},x_l), \quad x_k=o. \]
Moreover, we define the following \emph{entropy term} that describes counting complexity~\cite[Sections~2.2,~3.1]{KT17}:
\[ \mathrm J(\Sigma) = \sum_{k=1}^{\kmax} \int_{W^k} \d \nu_k \log \frac{\d \nu_k}{\d \mu^{\otimes k}} + \log \mu(W) \sum_{k=1}^{\kmax} (k-1)\nu_k(W)  \in [0,\infty], \numberthis\label{Jentropy} \]
with the convention that $0\log 0=0 \log (0/0)=0$ and $\mathrm J(\Sigma)=\infty $ whenever $\nu_k$ is not absolutely continuous with respect to $\mu^{\otimes k}$ for some $k$.
Now, by \cite[Proposition 1.5, part (3)]{KT17}, for $\kmax>1$, the coordinatewise weak limit \eqref{nukminimizerbeta=0}--\eqref{Adefnew} of $(R_{\lambda,k}(S))_{k \in [\kmax]}$ equals the unique minimizer of the variational formula
\[ \inf_{\Sigma=(\nu_k)_{k=1}^{\kmax} \colon \sum_{k=1}^{\kmax} \pi_0 \nu_k=\mu} \Big(\mathrm J(\Sigma) + \gamma \mathrm S(\Sigma)\Big). \numberthis\label{beta0variation} \]
This variational formula indeed has the form ``minimize the sum of entropy and energy among all admissible trajectory families'', as announced in Section~\ref{sec-mainfeatures}. The case $\kmax=1$ is again trivial: we have already seen that in this case $(R_{\lambda,k}(S))_{k \in [1]}$ converges to $(\mu)$, which is in fact the only admissible collection of measures for the variational formula \eqref{beta0variation}, and hence also the unique minimizer.

\subsubsection{Relation to an optimization problem via Monte Carlo Markov chains}\label{sec-MCMC} 

In the light of the motivation for the exponential form of the trajectory distribution and for the two penalty terms (cf.~Sections~\ref{sec-mainfeatures}, \ref{sec-SIRpenaltydiscussion}, and \ref{sec-varformula}), it is certainly interesting to minimize the cost function $s \mapsto \gamma \mathfrak S(s) + \beta \mathfrak M(s)$ for fixed $\beta,\gamma\in(0,\infty)$. Computationally, this is in general a hard problem for high densities $\lambda$ because the cardinality of $\mathcal S_{\kmax}(X^\lambda)$ increases super-exponentially in $N(\lambda)$, and $N(\lambda)$ is of linear order in $\lambda$. Thus, computing all values of $s \mapsto \gamma \mathfrak S(s) + \beta \mathfrak M(s)$  and then extracting the maximum is only feasible for small $\lambda$.

Now, our Gibbsian trajectory distribution opens the possibility to optimize this cost function via the well-known approach of {\em simulated annealing}. Furthermore, for $\lambda$ large, it is substantially less complex to realize the Gibbs distribution using Monte Carlo Markov chains than to directly minimize the cost function.

Indeed, our Gibbs distribution favours trajectory collections with small values of the cost function. Now, let us investigate the computational complexity of the numerical realization of the Gibbs distribution $\mathrm P^{\gamma,\beta}_{\lambda,X^\lambda}$, using Monte Carlo Markov chains (see e.g.~\cite{H02}). The recent master's thesis of Morgenstern \cite{M18} investigates this question. Given the intensity $\lambda$ and the realization of the point process of users $X^\lambda$, the author finds irreducible and aperiodic Markov chains on the state space $\mathcal S_{\kmax}(X^\lambda)$, both of Gibbs sampler and Metropolis types, having the Gibbs distribution as their stationary distribution. These chains therefore converge towards $\mathrm P^{\gamma,\beta}_{\lambda,X^\lambda}$ as the number of Markovian steps tends to infinity. 

The following results have been verified in \cite{M18}:
\begin{itemize}
\item The chains can be constructed in such a way that computing their transition matrices takes only a polynomial number of operations in $N(\lambda)$.

\item For both types of chains, in the limit $N(\lambda) \asymp \lambda \to \infty$, the mixing time is at most exponential in $\lambda$. This, together with the previous observation, provides an at most exponential upper bound on the number of operations needed in order to simulate the Gibbs distribution up to a given error $\eps>0$ in total variation distance. This is certainly much more efficient than evaluating all the trajectory collections.

\item In a variant of the Gibbsian model where any user (relay) can receive at most a fixed number $m_{\max} \in \N$ of incoming hops, the mixing time is even polynomial in $\lambda$. This is a realistic modelling since, in most applications in multihop networks, each relay has a bounded capacity for receiving incoming hops (see e.g.~\cite{HJP16,HJ17}). Note that this variant is always well-defined if $\mmax \geq \kmax-1$. The results of \cite[Section 1]{KT17} also show that for large $\mmax$, this variant behaves similarly to the original model in the high-density limit. 

\item These Monte Carlo Markov chains can also be used in order to find the optimum of the cost function $s \mapsto \gamma \mathfrak S(s) + \beta \mathfrak M(s)$ for a fixed $\lambda$ and a fixed realization of $X^\lambda$, using simulated annealing. Here, one lets the transition probability of the $t$-th step of the chain depend on $t$ via replacing $(\gamma,\beta)$ by $(\gamma_t,\beta_t)$ such that $\gamma_t,\beta_t \to \infty$ sufficiently slowly as $t \to \infty$. \cite[Theorem 7.1]{M18} shows that if one chooses $\beta_t=\smfrac{\beta}{\gamma} \gamma_t \leq c_0 \log t$ for a suitably chosen $c_0=c_0(\lambda) \asymp \lambda/N(\lambda)^2$, then the Markov chain converges to the uniform distribution on the set of minimizers of the cost function. 
\end{itemize}

of convergence of such a chain to equilibrium, which is an interesting problem on its own.

\section{Game-theoretic interpretation of the optimization problem}\label{sec-gametheory}

In this section, we use the notation introduced in Section~\ref{sec-alsocongestion}. In Section~\ref{sec-MCMC} we explained how our model can be employed for obtaining a stochastic simulation algorithm for finding minimizer(s) $s$ of $\gamma \mathfrak S(s)+\beta\mathfrak M(s)$, i.e., including the congestion term. In this section, we give a more thorough discussion of this optimization problem from a game-theoretical point of view. In particular, we explain in which sense the optimum in our model is selfish or non-selfish and give a number of explicit examples for illustration. Note that in the term $\mathfrak S(s)$ there is no interaction between the trajectories (only with the users), but in the term $\mathfrak M(s)$. We therefore keep both $\beta>0$ and $\gamma>0$ fixed.



Let $X^\lambda=\lbrace X_1,\ldots,X_n \rbrace$ be fixed, where $n \in \N$. For the rest of this section, we simplify the notation as follows. We write $\mathcal S=\mathcal S_{\kmax}(X^\lambda)$ and for $i \in [n]$, $\mathcal S^i=\mathcal S^i_{\kmax}(X^\lambda)$. Let now $s =(s^i)_{i=1}^n\in \Scal$ be a collection of message trajectories. We recall that for $i \in [n]$, $|s^i|$ is the number of hops taken by the $i$th trajectory $s^i$ sent out from $X_i$ to $o$. Then, in terms of interference and congestion, the \emph{individual cost} $C_i(s)$ of $s^i$  with respect to
the entire collection $s$ is the individual interference penalization of $s^i$, together with the congestion penalization at all the relays that $s^i$ uses:
\[ 
C_i(s) = \gamma \sum_{l=1}^{|s^i|} \SIR(s^i_{l-1},s^i_l,X^\lambda)^{-1} + \beta \sum_{l=1}^{|s^i|-1} \sum_{j=1}^n (m_j(s)-1) \mathds 1 \lbrace s^i_l=X_j \rbrace. \numberthis\label{Ci} 
\]
The \emph{total cost} of the trajectory collection $s$ is defined as
\[ C(s) = \sum_{i=1}^n C_i(s)= \gamma \mathfrak S(s)+\beta \mathfrak M(s) = \gamma \sum_{i=1}^n \sum_{l=1}^{|s^i|} \SIR(s^i_{l-1},s^i_l,X^\lambda)^{-1}   + \beta \sum_{j=1}^n m_{j}(s)(m_j(s)-1). 
\] 
We say that $s$ is \emph{system-optimal} if $C(s) \leq C(s')$ for all $s' \in \Scal$. 

For a collection $s=(s^i)_{i=1}^n$ of trajectories we write $s=s_i(s^i,s^{-i})$, where $s^{-i}=(s^j)_{j \in [n], j\neq i}$.  Now, given $i$ and  $s^{-i}=(s^j)_{j\neq i}$ with $s^j \in \Scal^j$ for all $j \neq i$, a \emph{best response} of the $i$th user for $s^{-i}$ is $u^i \in \Scal^i$ such that $C_i(s_i(u^i,s^{-i})) \leq C_i(s_i(s^i,s^{-i}))=C_i(s)$ for all $s^i \in \Scal^i$. We say \cite[Section 1.3.3]{NRTV07} that $s = (s^i)_{i=1}^n$ is a \emph{pure Nash equilibrium} if $s^i$ is a best response for $s^{-i}=(s^j)_{j \neq i}$ for all $i \in [n]$.

\begin{claim}\label{claim-Nashexists}
For $\beta,\gamma,\lambda>0$, given $X^\lambda$, a pure Nash equilibrium always exists.
\end{claim}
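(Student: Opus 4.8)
\noindent The plan is to exhibit an \emph{exact potential function} for the finite game with player set $[n]$, strategy spaces $\mathcal S^i$ and cost functions $C_i$, and then to invoke the standard fact (Monderer--Shapley; see \cite{NRTV07}) that every minimizer of such a potential is a pure Nash equilibrium. Concretely: since $\mathcal S=\prod_{i=1}^n\mathcal S^i$ is finite ($\#\mathcal S^i\le\sum_{k=1}^{\kmax}n^{k-1}$), any $\Phi\colon\mathcal S\to[0,\infty)$ satisfying
\[ \Phi\big(s_i(u^i,s^{-i})\big)-\Phi\big(s_i(s^i,s^{-i})\big)=C_i\big(s_i(u^i,s^{-i})\big)-C_i\big(s_i(s^i,s^{-i})\big) \]
for all $i\in[n]$, all $s^{-i}$ and all $s^i,u^i\in\mathcal S^i$ attains its minimum at some $s^\ast$, and then for every $i$ and $u^i$ one has $C_i(s_i(u^i,s^{\ast,-i}))-C_i(s^\ast)=\Phi(s_i(u^i,s^{\ast,-i}))-\Phi(s^\ast)\ge0$, so $s^{\ast,i}$ is a best response to $s^{\ast,-i}$ and $s^\ast$ is a pure Nash equilibrium. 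The candidate will be $\Phi(s)=\gamma\,\mathfrak S(s)+\tfrac\beta2\,\mathfrak M(s)$, with $\mathfrak S,\mathfrak M$ as in \eqref{realSIRenergy}--\eqref{realMenergy} (plus a correction explained below when trajectories are not self-avoiding).

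To check the potential identity I would split $C_i$ into its interference part and its congestion part. The interference part is immediate: $\SIR(\cdot,\cdot,X^\lambda)$ depends only on the user positions $X^\lambda$ and not on the trajectories, so $\gamma\mathfrak S(s)=\sum_{j=1}^n\big(\text{interference part of }C_j(s)\big)$ is a sum of single-coordinate terms, and a unilateral change of $s^i$ changes $\gamma\mathfrak S$ by exactly the change of the interference part of $C_i$. The congestion part is the classical congestion-game argument of Rosenthal (see \cite{NRTV07}): with the relays $X_1,\dots,X_n$ as ``resources'' and a relay of load $x$ charging $\beta(x-1)$ to each trajectory through it, one has $\tfrac\beta2\mathfrak M(s)=\sum_{j=1}^n\sum_{x=1}^{m_j(s)}\beta(x-1)$, and a term-by-term cancellation over the relays --- the contribution of $s^{-i}$ to each load $m_j$ being frozen under the deviation of player $i$ --- yields the identity for the congestion part. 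Adding the two contributions gives the potential identity for $\Phi$, and the first paragraph concludes.

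The step I expect to be the main obstacle is the congestion cancellation when a trajectory may visit the same relay more than once (which the model allows): the weight a player places on a resource is then the strategy-dependent multiplicity of that relay in its trajectory, the congestion part of $C_i$ is that of an \emph{affine weighted} congestion game, and the bare Rosenthal potential $\tfrac\beta2\mathfrak M$ fails the identity. The fix is to replace $\Phi$ by $\Phi(s)=\gamma\mathfrak S(s)+\tfrac\beta2\big(\mathfrak M(s)+\mathfrak L(s)\big)$, where $\mathfrak L(s)=\sum_{i=1}^n\#\{(l,l')\colon l\ne l',\,s^i_l=s^i_{l'}\}$ counts coincident relay positions within a common trajectory (so $\mathfrak L\equiv0$ on self-avoiding configurations, recovering the naive potential) --- this is precisely the standard exact potential of affine weighted congestion games, and with it the identity reduces, relay by relay, to the elementary algebraic check $\beta(a-b)(r+a+b-1)$ on each resource, which holds. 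All remaining points --- finiteness of the strategy spaces, well-definedness of $C_i$ and $\Phi$, and the deduction of the Nash property from minimality of $\Phi$ --- are routine.
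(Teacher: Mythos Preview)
Your proof is correct and follows essentially the same congestion-game/potential-game route as the paper's, with a minor difference in execution worth noting. The paper first restricts the strategy spaces to self-avoiding trajectories via a dominance argument (removing a loop strictly decreases $C_i$ and does not increase any $C_j$, $j\ne i$, so looped strategies are never best responses), and then cites \cite[Theorem~18.12]{NRTV07} for unweighted atomic congestion games. You instead keep the full strategy spaces and construct the Rosenthal potential explicitly, augmenting $\tfrac\beta2\mathfrak M$ by the self-interaction term $\tfrac\beta2\mathfrak L$ to handle repeated relay visits; this is precisely the exact potential for affine weighted congestion games, and your algebraic check is correct. Either way one arrives at a finite exact potential game; your version is a bit more self-contained and avoids the dominance reduction, while the paper's is a one-line citation after that reduction.
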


\begin{proof}
The claim follows from the well-known result \cite[Theorem 18.12]{NRTV07} that unweighted atomic congestion games always have a pure Nash equilibrium. Indeed, the cost functions $C_i$, $i \in [n]$, and $C$ 
are the individual respectively total costs in an unweighted atomic congestion game (atomic instance) \cite[Section 18]{NRTV07}, which is defined as follows. For each $i \in [n]$, the set of all possible paths $s^i \in \mathcal S^i$ of length at most $\kmax$ from $X_i$ to $o$ via users in $X_j \in X^\lambda$ without visiting the same $X_j$ twice can be seen as the set of the strategies of the $i$th user (player) $X_i$. Indeed, for the sake of optimization of individual and total costs, we can neglect trajectories with loops since removing any loop from the trajectory of the $i$th user strictly decreases $C_i$ and does not increase $C_j$ for $j \neq i$, neither $C$. Since almost surely the points of $X^\lambda$ are pairwise distinct, the sets of possible strategies of different users are disjoint, in other words, the game is unweighted. Further, each user has a finite number of strategies. 

The cost function in this game is defined as follows. Each hop from $X_i$ to $X_j$ has a constant cost equal to $\gamma \SIR(X_i,X_j,X^\lambda)^{-1}$, and each used relay $X_j$ has a linear cost equal to $\beta (m_j(s)-1)$, depending on the trajectory collection $s$. This way, by \eqref{Ci}, the cost of the strategy of $X_i$ corresponding to $s \in \Scal$ equals $C_i(s)$.  Thus, the claim follows.
\end{proof}
\color{black}
Now, if there exists a system-optimal $s \in \Scal$ such that $C(s) < C(s')$ for all Nash equilibria $s'$, then we call $s$ a \emph{non-selfish optimum}, since there exists $i \in [n]$ such that $s^i$ is not the best response of the $i$th user for the remaining coordinates of the trajectory collection. Example~\ref{example-nonselfish} shows a two-dimensional example that has a non-selfish optimum, and Remark~\ref{remark-yesselfish} tells more about the relation of the individual and the total costs.

\begin{example}\label{example-nonselfish}
Let $d=2$, $\lambda=1$ and $\kmax=2$, and let $X^\lambda=X^1= \{ X_1, X_2, X_3 \}$, $\ell$ and $\gamma>0$ be chosen in the following way. $X_1, X_2,X_3$ and $o,X_2,X_3$ are vertices of two equilateral triangles with $X_1$ being in the interior of the latter triangle, so that $|X_1-X_2|=|X_1-X_3|$ and $|X_2|=|X_3|$, so that $\gamma \SIR(X_1,o,X^1)^{-1}=\gamma \SIR(X_i,X_1,X^1)^{-1}=1$ and $\gamma \SIR(X_i,o,X^1)^{-1}=1+q$ for all $i \in \{2,3\}$ for some $q>0$ (see Figure~\ref{figure-nonselfish}). 

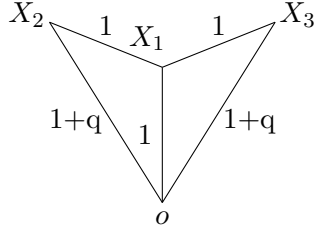
\begin{figure}
\begin{tikzpicture}
\draw (0,-0.9) -- (0,0.9) node[pos=0.5, left] {1};
\draw (0,0.9) -- (1.5,1.5) node[pos=0.5,above] {1};
\draw (0,-0.9) -- (1.5,1.5) node[pos=0.45,right] {1+q};
\draw (0,-0.9) -- (-1.5,1.5) node[pos=0.45,left] {1+q};
\draw (0,0.9) -- (-1.5,1.5) node[pos=0.5,above] {1};
\draw (0,-1.1) node {$o$};
\draw (-0.2,1.29) node {$X_1$};
\draw (1.8,1.6) node {$X_3$};
\draw (-1.8,1.6) node {$X_2$};
\end{tikzpicture}\caption{Interference weights per hop in Example~\ref{example-nonselfish}. In the relevant cases, the congestion at $X_1$ is $\beta y(y-1)$, where $y$ is the number of elements of $\{ X_2,X_3 \}$ relaying through $X_1$. }\label{figure-nonselfish}
\end{figure}


The boundedness of $\ell(|\cdot-\cdot|)$ away from 0 on $W \times W$ implies that for any $\beta>0$ and $i \in \{ 2,3 \}$, any $s^i \in \Scal^i$ that uses some $X_j$ with $j \in \{ 2,3 \}$ as a relay is suboptimal both with respect to
total and individual costs. Indeed, leaving out this relay and moving on to the next hop of the same trajectory instead decreases $C_i(s)$ without increasing any $C_m(s)$, $m\neq i$. Using analogous arguments, one easily concludes that in any system-optimal trajectory and also in any Nash equilibrium, $X_1$ submits directly to $o$, and the two users $X_2,X_3$ use either the direct link to $o$ or the two-hop path via $X_1$ to $o$. Table~\ref{table-nonselfish} shows the individual costs and the total cost in some standard representatives of these cases.

\begin{table}
\begin{tabular}{|c|c|l|l|l|} \hline
Number of hops of $s^2$ & Number of hops of $s^3$ &  $C_2(s)$ & $C_3(s)$  & $C(s)$ \\ \hline 
1 & 1  & $2+q$ & $2+q$  & $5+2q$ \\ \hline
2 & 1  & 2 & $2+q$ & $5+q$ \\ \hline
2 & 2  & $2+\beta$ & $2+\beta$ & $5+2\beta$ \\ \hline
\end{tabular}
\vspace{6pt}
\caption{Individual and total costs in standard representatives of the relevant cases in Example~\ref{example-nonselfish}.}\label{table-nonselfish}
\end{table}

The positive parameters $q$ and $\beta$ can be chosen such that the following holds. Given that $X_2$ uses its two-hop path $X_2 \to X_1 \to o$, the best response of $X_3$ is to also use its two-hop path $X_3 \to X_1 \to o$ and vice versa, so that both users using their two-hop paths forms the unique Nash equilibrium, but the system optima are the trajectory collections in which only one of them relays via $X_1$ and the other one submits directly to $o$. According to Table~\ref{table-nonselfish}, this holds if $q>0$ and $\beta \in (q/2, q)$. Thus, in such cases, the optimum is non-selfish.

Similar effects occur in all dimensions $d \geq 2$, with $d+1$ users $X_1,X_2,\ldots,X_{d+1}$ situated so that $|X_j-X_1|=|X_i-X_1|$ and $|X_1|<|X_i|=|X_j|$ for all $i,j \geq 2$. In such cases, one can choose the parameters in such a way that for all $j \geq 2$, knowing that $X_1$ transmits directly to $o$ and each $X_i$, $j \neq i \geq 2$ relays through $X_1$, the best response of $X_j$ is to use also the relayed link via $X_1$, but with respect to total costs it would be better if $X_j$ transmitted directly to $o$. Note that if this holds, it may still happen that neither of these two joint strategies is system-optimal.
\ExampleEnde
\end{example}

\begin{remark}
In the setting of our Gibbsian model, Nash equilibria are not necessarily unique. Consider Example~\ref{example-nonselfish} in the boundary case $\beta=q$. Then one easily checks that the system exhibits three different Nash equilibria, namely the three ones that appear in Table~\ref{table-nonselfish}. Also for $\beta>q$, there are two Nash equilibria, namely the ones where exactly one of $s^2,s^3$ transmits directly to $o$ and the other one via $X_1$, by the symmetry between $X_2$ and $X_3$.
\end{remark}

\begin{remark}\label{remark-yesselfish}
Now we show that in general, if plugging in an additional relay to a trajectory decreases the total cost, then it also decreases the individual cost of the transmitter of that trajectory. Thus, in particular, a situation opposite to Example~\ref{example-nonselfish} is not possible, that is, for any choice of $\ell,\beta,\gamma$, it cannot happen that $C(s)$ is a non-selfish system optimum while $C_2(s)$ or $C_3(s)$ is a Nash equilibrium.   

Indeed, consider Figure~\ref{figure-nonselfish2} with $\lambda>0$, $X_i,X_h \in X^\lambda$ and $x \in X^\lambda \cup \{ o \}$, where the direct hop from $X_i$ to $x$ has interference penalization $p_0>0$, while the two-hop path via $X_h$ has interference penalization $p_1+p_2$ with $p_1,p_2>0$. Now, if $s^{-i}=(s^j)_{j \neq i}$ is given and the number of incoming hops at $X_h$ coming from all trajectories but the one of $X_i$ equals $m \geq 0$, then the direct link from $X_i$ to $x$ has individual cost 
$p_0+K$ and the $X_i\to X_h \to o$ relayed link has individual cost 
$m+p_1+p_2+K$, for some $K \geq 0$. On the other hand, the total cost of the collection with the $X_i\to x$ direct link is 
$2m+p_1+p_2+K'$, and the one with the $X_i \to X_h \to x$ relayed link is 
$p_0+K'$, for some $K' \geq 0$. Thus, if plugging in the relay $X_h$ increases the individual cost $C_i$, then it also increases the total cost $C$. This implies the claim.
\end{remark}
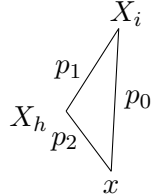
\begin{figure}
\begin{tikzpicture}
\draw (0,0.1) -- (-0.6,0.9) node[pos=0.5, left] {$p_2$};
\draw (-0.6,0.9) -- (0.1,2) node[pos=0.5,left] {$p_1$};
\draw (0,0.1) -- (0.1,2) node[pos=0.5,right] {$p_0$};
\draw (0,-0.1) node {$x$};
\draw (-1.1,0.8) node {$X_h$};
\draw (0.2,2.2) node {$X_i$};
\end{tikzpicture}\caption{A situation opposite to Example~\ref{example-nonselfish} is not possible.}\label{figure-nonselfish2}
\end{figure}

\section{Numerical studies} \label{sec-simulations}

In this section, we give numerical illustrations of various properties of the minimizer $(\nu_k)_{k\in[\kmax]}$ \eqref{nukminimizerbeta=0} of \eqref{beta0variation}, which describes the limiting empirical trajectory measure according to Proposition~\ref{prop-variationbeta=0}. We consider $\kmax=2$, $d=1,2$, $\ell$ satisfying $\ell(r) \sim r^{-4}$ as $r \to \infty$, $W$ being a ball sufficiently large such that both direct communication and two-hop communication are non-negligible, and $\mu$ being the Lebesgue measure on $W$. We do not consider congestion, i.e., we put $\beta=0$. 
We will look at the areas where one-hop and two-hop communication dominate, respectively, and the approximation of a straight line of the latter trajectories for $d=2$. We will see that the effects that we proved in Section~\ref{sec-gammatoinfty} in the limit $\gamma\to\infty$ are already very pronounced for $\gamma=1$. 

First, let us choose $\ell(r)=\min\{1,r^{-4}\}$. Let $W=\overline{B_\varrho(o)}$ be a ball around the origin $o$. We will pick $\varrho$ sufficiently large so that the effect of the path-loss function $\ell$ is strong enough in the sense that we can study areas in $W$ from which a direct hop to $o$ is preferred and areas from which a two-hop trajectory is preferred. We are interested in seeing how sharp the transition between these two areas is. By rotational invariance, we expect that the first area is a centred ball and the second the complement of a ball in $W$. Hence, we do not lose much when going to $d=1$. Using the arguments of the first paragraph of Section~\ref{sec-gammatoinftystatement}, for large $\gamma$, we expect the transition close to the point where the interference term gives the transition from optimality of one-hop trajectories to two-hop trajectories, i.e., at the radius $|x_0|$, where the number
\[ 
g(x_0,o) - \min_{x_1 \in W} \big(g(x_0,x_1) + g(x_1, o)\big) \numberthis\label{1or2} 
\]
switches the sign. Let $r_0^*$ denote that point. Our main question is whether already for moderate values of $\gamma$, we see a pronounced transition in the measures $\nu_1(\d x_0)$ and $\pi_0\nu_2(\d x_0)$ of the form that $\nu_1(\d x_0) \approx \mu(\d x_0)$ for all $x_0$ with $|x_0|<r_0^*$ and $\pi_0 \nu_2(\d x_0) \approx \mu(\d x_0)$ for all $x_0$ with $|x_0|>r_0^*$, with a fast change around $r_0^*$. Observe that both $\nu_1$ and $\pi_0 \nu_2$ have densities that are positive throughout the interior of the support of $\mu$.

In the following one-dimensional numerical example, the answer is yes, already for $\gamma=1$. The plots presented here were created using Wolfram \emph{Mathematica}.
\begin{example}\label{example-1dim}
Let $\kmax=2$, $d=1$, $W=[-5,5]=\overline{B_5(o)} \subset \R$, $\mu=\Leb|_W$ and  $\ell(r)=\min \{1, r^{-4}\}$.  According to Proposition~\ref{prop-variationbeta=0}, the minimizing measures $\Sigma=(\nu_1,\nu_2)$ are given as follows. With
\[ 
\frac{1}{A(x_0)} = \exp\Big( -\gamma \frac{\int_{-5}^5 \ell(|y|) \d y}{\ell(|x_0|)} \Big) + \frac{1}{10} \int_{-5}^5 \d x_1 \exp \Big( -\gamma \Big( \frac{\int_{-5}^5 \ell(|y-x_1|) \d y}{\ell(|x_0-x_1|)}+\frac{\int_{-5}^5 \ell(|y|) \d y}{\ell(|x_1|)} \Big) \Big), \numberthis\label{A1dim} 
\]
we have
\[ 
\nu_1(\d x_0) = \d x_0 A(x_0) \exp\Big( -\gamma \frac{\int_{-5}^5 \ell(|y|) \d y}{\ell(|x_0|)} \Big) \numberthis\label{nu11dim} \]
and
\[ \nu_2(\d x_0,\d x_1) = \frac{1}{10} \d x_0 \d x_1 A(x_0) \exp \Big( -\gamma \Big( \frac{\int_{-5}^5 \ell(|y-x_1|) \d y}{\ell(|x_0-x_1|)}+\frac{\int_{-5}^5 \ell(|y|) \d y}{\ell(|x_1|)} \Big)\Big).  \numberthis\label{nu21dim} \]
All integrals are numerically tractable for $\gamma \in[0,1]$. As seen in Figure~\ref{figure-nu1density1d}, already for $\gamma=1$, the density of $\nu_1$ is very close to the step function with a jump at the point $r_0^*$ where \eqref{1or2} switches its sign.  
\begin{figure}
\includegraphics[scale=0.68]{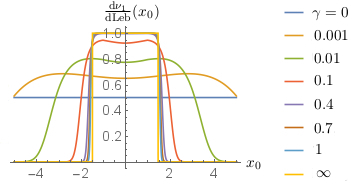}
\caption{The graphs of $x_0\mapsto\smfrac{\d \nu_1}{\d \Leb} (x_0)$ as in Example~\ref{example-1dim} for $\gamma=0, 0.001, 0.01, 0.1, 0.4, 0.7, 1, \infty$.}\label{figure-nu1density1d}
\end{figure}
Also the density of two-hops paths, $(x_0,x_1) \mapsto \smfrac{\d \nu_2}{\d \Leb^{\otimes 2}} (x_0,x_1)$, is extremely small for $|x_0-x_1|$ large, already for $\gamma=1$, so that we prefer to plot it on a logarithmic scale, see Figure~\ref{figure-nu2density1d}. 

\begin{figure}
\includegraphics[scale=0.5]{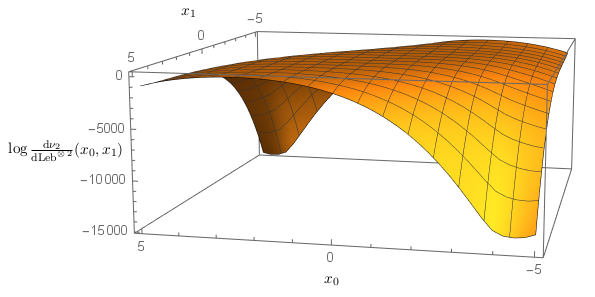}
\includegraphics[scale=0.5]{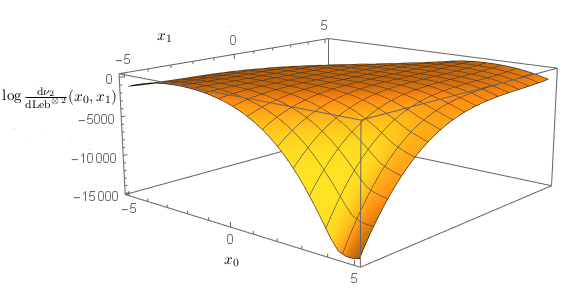}
\caption{The graphs of $(x_0,x_1) \mapsto \log\smfrac{\d \nu_2}{\d \Leb^{\otimes 2}} (x_0,x_1)$ as in Example~\ref{example-1dim} for $\gamma=1$ from two different views.}\label{figure-nu2density1d}
\end{figure}

It is not easy to read from Figure~\ref{figure-nu2density1d} what value(s) maximize(s) $\smfrac{\d \nu_2}{\d \Leb^{\otimes 2}} (x_0,x_1)$ over $x_1 \in W$ for given $x_0 \in W$. Such a maximizer can be interpreted as an \emph{optimal relay} of the transmitter $x_0$. Numerical simulations of the optimal relays turn out to be very noisy. Nevertheless, we see that this maximization problem has an interesting property. Since the normalized intensity measure $\mu/\mu(W)$ is the uniform distribution on $W$, for any $\gamma>0$ and $x_0 \in W$ fixed, the set of optimal relays of $x_0$ equals the set of minimizers of
\[ x_1 \mapsto g(x_0,x_1)+g(x_1,o) = \frac{\int_{W} \ell(|z-x_1|) \d z}{\ell(|x_0-x_1|)} + \frac{\int_{W} \ell(|z|)\d z}{\ell(|x_1|)}.\numberthis\label{2g}\]
The interpretation of this phenomenon is that the optimal value with respect to interference penalization corresponds to the largest value of $x_1 \mapsto \smfrac{\d \nu_2}{\d \Leb^{\otimes 2}}(x_0,x_1)$ for any $x_0 \in W$. Now, the second term in \eqref{2g} is optimal if $|x_1| \leq 1$. Further, since $x_1 \mapsto \int_{W} \ell(|z-x_1|) \d z$ is strictly monotone decreasing in $|x_1|$, one easily sees that if $|x_0| \leq 1$, then $x_1 = \mathrm{sgn}~ x_0$ optimizes the first term over $\{ |x_1| \leq 1 \}$. This implies that if $|x_0| < 1$, then $\smfrac{\d \nu_2}{\d \Leb^{\otimes 2}} (x_0,x_1)$ is strictly larger for $x_1=\mathrm{sgn}~ x_0$ than for any $x_1$ with $|x_1|<1$. That is, all optimal relays of $x_0$ are further away from $o$ than $x_0$ itself, and the first hop of an optimal message trajectory increases the distance from $o$! Nevertheless, we recall from Figure~\ref{figure-nu1density1d} that for $|x_0| \leq 1$, one-hop communication dominates two-hop communication already for $\gamma=1$, i.e.,
 the density of $\nu_2$ at an optimal two-hop path is still very low.

Since $x_1 \mapsto \int_{W} \ell(|z-x_1|) \d z$ varies much slower than $x_1 \mapsto \ell(|x_1|)$ for $|x_1|>1$, we expect according to \eqref{2g} that in general for $|x_0| \leq 2$, all optimal relays of $x_0$ must be situated very close to 1. In contrast, for $|x_0|$ significantly larger than 2, the optimal relays of $x_0$ are all about $x_0/2$. Indeed, in this regime, the optimization of $ x_1 \mapsto \smfrac{\d \nu_2}{\d \Leb^{\otimes 2}} (x_0,x_1)$ can be restricted to the set of $x_1$ such that both $|x_0-x_1|=|x_0|-|x_1|>1$ and $|x_1|>1$ hold. On this set, the following three properties are satisfied: $x_1 \mapsto \int_{W} \ell(|z-x_1|) \d z$ does not vary much (at $x_1=o$, its value is around 2.6613, while for $|x_1|=4$, it is about 2.3328), $\ell$ is taken at values $|x_1|$ and $|x_0-x_1|$ where it is strictly monotone decreasing, and $1/\ell$ is convex. In the context of Example~\ref{example-2dim}, where these properties hold everywhere (i.e., $\ell$ has no constant part), we will argue why they 
imply that the optimal relay of $x_0$ is close to $x_0/2$ for all $x_0 \in W$.


Our numerical results suggest that \eqref{twohop} does not hold in this example for $\Delta=W$, since otherwise, by Proposition~\ref{prop-Ma(W)tozero}, $\pi_0 \nu_2$ would be close to the zero measure on the entire communication area $W$ for large $\gamma$. The same is true for Example~\ref{example-2dim}.
\ExampleEnde
\end{example}

In the next, two-dimensional, example, we analyse the concentration of the measure $\nu_2$ \eqref{nukminimizerbeta=0} on the straight line between transmitter and receiver in the setting of Section~\ref{sec-gammatoinfty}, i.e., in case of a rotationally invariant intensity $\mu$ and a strictly monotone decreasing path-loss function $\ell$.
\begin{example}\label{example-2dim}
We choose $d=2$, $\kmax=2$, $W=\overline{B_7(o)} \subset \R^2$, $\mu=\Leb|_W$ and $\ell(r)=(1+r)^{-4}$. We note that $\ell$ is strictly monotone decreasing. Now, the one-hop trajectory measure $\nu_1$ is a measure on $W \subset \R^2$ and the two-hop one $\nu_2$ is a measure on $W^2 \subset \R^4$; they are defined as in \eqref{nukminimizerbeta=0}, analogously to the concrete case \eqref{A1dim}--\eqref{nu21dim}, with a suitable adaptation to the new parameters. 

Now, Proposition~\ref{prop-gammatoinfty}~\eqref{first-gammatoinfty} implies that for any $x_0\in W$, all maximizers of the function $x_1 \mapsto \smfrac{\d \nu_2}{\d \Leb^{\otimes 2}}(x_0,x_1)$ are situated along the straight line segment $[x_0,o]$. The higher the value of $\gamma$ is, the stronger the density $\smfrac{\d \nu_2}{\d \Leb^{\otimes 2}}(x_0,\cdot)$ concentrates around the set of maximizers, according to part~\eqref{second-gammatoinfty} of the same proposition. As in Example~\ref{example-1dim}, since $\mu/\mu(W)$ is the uniform distribution on $W$, the maximum of $x_1 \mapsto \smfrac{\d \nu_2}{\d \Leb^{\otimes 2}}(x_0,x_1)$ is taken at the minimizer(s) of $x_1 \mapsto g(x_0,x_1)+g(x_1,o)$ for any $\gamma>0$. 

Since these minimizers lie on $[x_0,o]$, we now  numerically search for an approximate minimizer of the form $x_1=\alpha x_0$ with $\alpha \in [0,1]$. It is highly expectable that the optimal $\alpha$ will be very close to 1/2 for any $x_0 \in W$. Indeed, since $1/\ell$ is convex, for $c>0$, $x_1 \mapsto c/\ell(|x_0-\alpha x_0|)+c/\ell(|\alpha x_0-o|)$ attains its minimum at $\alpha=1/2$. On the other hand, for any $x_0 \in W$, the numerator of $g(x_0,\cdot)$, i.e., $x_1 \mapsto \int_{W} \ell(|z-x_1|) \mu(\d z)$, is close to a constant on $\frac 12 W=\{ x_0/2 \colon x_0 \in W \}$. Its value at $o$ is approximately $1.0022$ and its minimal value in $\frac 12 W$ at $|x_1|=7/2$ is around $0.9798$. Therefore, $\alpha$ much smaller than 1/2 cannot be the optimal choice.
Outside $\frac 12 W$, the interference is not any more close to constant: its minimal value at $|x_1|=7$ is about $0.4770$, slightly less than the half of the value at $o$. Nevertheless, for $\alpha$ much larger than 1/2 (close to 1), the additional penalty for a longer second hop is much larger than the gain due to the larger path-loss value and better interference of the first hop. Thus, we expect that such an $\alpha$ also cannot be optimal. 

We simulate 100 points $x_0$ according to the uniform distribution $\mu/\mu(W)$ in $W$ and we compute the minimizer of $\alpha \mapsto g(x_0,\alpha x_0)+g(\alpha x_0,x_0)$ for each of these points. We observe that in all these cases, the optimal $\alpha$ is very close to 0.5: it lies strictly between 0.49 and 0.51. We note that this holds both in the regime $\{ |x_0| < r_0^* \} $ where one-hop and in the regime $\{ |x_0|>r_0^* \}$ where two-hop communication dominates for large $\gamma$. In this example, the value  of $r_0^*$ lies between 6.2989 and 6.299, so that 18 points out of 100 in the sample belong to the regime $\{|x_0|>r_0^*\}$. These results are visualized in Figure~\ref{figure-optimalrelay2d}.

\begin{figure} \!
\includegraphics[scale=0.6]{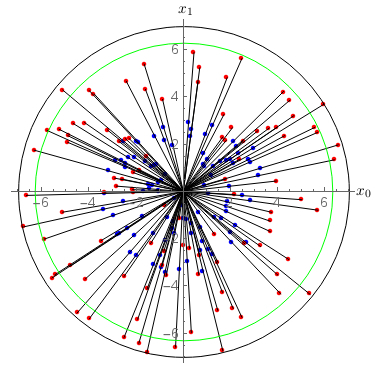}
\caption{100 uniformly distributed users $x_0$ in $W=\overline{B_7(o)}$ (in red), their approximate optimal relays $\approx x_0/2$ (blue) and the circle of radius $\approx 6.299$ separating the regimes where one-hop and two-hop communication dominates, respectively (green). The outer black circle is the boundary of $W$.}\label{figure-optimalrelay2d}
\end{figure}


%
\end{example}
\subsection*{Acknowledgements} The authors thank B. Jahnel, C. Hirsch, M. Klimm, and S. Morgenstern for interesting discussions and comments. The second author acknowledges support by the Berlin Mathematical School (BMS).


\begin{thebibliography}{WWWW98}

\bibitem[BB09]{BB09}
{\sc F. Baccelli} and {\sc B. Błaszczyszyn},
\newblock {\em Stochastic Geometry and Wireless Networks: Volume II: Applications.} 
\newblock Now Publishers Inc. (2009).

\bibitem[BBM11]{BBM11}
 \textsc{F. Baccelli}, \textsc{B. Błaszczyszyn}, and \textsc{O. Mirsadeghi},
 Optimal paths on the space-time SINR random graph,
 \emph{Adv. Appl. Probab.} \textbf{43}, 131--150 (2011).
 
\bibitem[BC12]{BC12}
\newblock {\sc F. Baccelli} and {\sc C.~S. Chen},
\newblock Self-Optimization in Mobile Cellular Networks: Power
Control and User Association,
\newblock \emph{IEEE International Conference on Communications}, Cape Town (2010).

\bibitem[DZ98]{DZ98}
\newblock {\sc A.~Dembo} and {\sc O.~Zeitouni},
\newblock {\it Large Deviations Techniques and Applications\/}, 
2nd edition, Springer,  Berlin (1998).

\bibitem[CBK16]{ChBK16}
\newblock {\sc A. Chattopadhyay}, {\sc B.~Błaszczyszyn}, and {\sc H.~P.~Keeler},
\newblock Gibbsian On-Line Distributed Content Caching Strategy for Cellular Networks,
\newblock {\em arXiv:1610.02318} (2016).





\bibitem[FDTT07]{FDTsT07}
\textsc{M. Franceschetti}, \textsc{O. Dousse}, \textsc{D. Tse}, and \textsc{P. Thiran}, 
Closing the gap in the capacity of wireless
networks via percolation theory, IEEE Trans. Inform. Theory \textbf{53:3}, 1009–18 (2007).

\bibitem[GK00]{GK00}
\newblock {\sc P. Gupta} and {\sc P.~R.~Kumar},
\newblock The Capacity of Wireless Networks,
\newblock \emph{IEEE Trans. Inform. Theory} {\bf 46:2}, (2000).

\bibitem[GT08]{GT08}
\newblock {\sc A.~J.~Ganesh} and {\sc G.~L.~Torrisi},
\newblock Large Deviations of the Interference in a Wireless Communication Model, 
\newblock \emph{IEEE Trans. Inform. Theory} {\bf 54:8}, (2008).

\bibitem[H02]{H02}
\newblock {\sc O.~Häggström},
\newblock \emph{Finite Markov Chains and Algorithmic Applications,}
\newblock Cambridge University Press (2002).

\bibitem[HJ17]{HJ17}
\newblock {\sc C.~Hirsch} and {\sc B.~Jahnel},
\newblock  Large deviations for the capacity in dynamic spatial relay networks,
\newblock \emph{Markov Process.~Relat.~Fields}, to appear, {\em arXiv:1712.03763} (2017).

\bibitem[HJKP18]{HJKP15}
\newblock {\sc C.~Hirsch}, {\sc B.~Jahnel}, {\sc P.~Keeler}, and {\sc R.~Patterson},
\newblock Large deviations in relay-augmented wireless networks,
\newblock \emph{Queueing Systems} \textbf{88}, 349--387 (2018).

\bibitem[HJP18]{HJP16}
\newblock {\sc C.~Hirsch}, {\sc B.~Jahnel}, and {\sc R.~Patterson},
\newblock Space-time large deviations in capacity-constrained relay networks,
\newblock \emph{Latin American Journal of Probability and Statistics} {\textbf{15}}, 587--615 (2018).

\bibitem[IV17]{IV15}
\newblock {\sc S.~Iyer} and {\sc R.~Vaze},
\newblock Achieving nonzero information velocity in wireless networks,
\newblock \emph{Ann.~Appl.~Probab.} \textbf{27:1}, 48--64 (2017).

\bibitem[KB14]{KB14}
\newblock {\sc H.~P.~Keeler} and {\sc B.~Błaszczyszyn}, 
\newblock SINR in wireless networks and the two-parameter Poisson-Dirichlet process, 
\newblock \emph{IEEE Wireless Communications Letters, IEEE}, {\bf 3:5}, 525--528 (2014). 


\bibitem[KT18]{KT17}
\newblock {\sc W.~König} and {\sc A.~Tóbiás},
\newblock A Gibbsian model for message routing in highly dense multi-hop networks,
\newblock \emph{Latin American Journal of Probability and Statistics}, to appear, {\em arXiv:1704.03499v3} (2018).

\bibitem[M18]{M18}
\newblock {\sc S.~Morgenstern},
\newblock \emph{Markov Chain Monte Carlo for
Message Routing},
\newblock master's thesis, TU Berlin (2018).

\bibitem[NRTV07]{NRTV07}
\newblock {\sc N.~Nisan}, {\sc T.~Roughgarden}, {\sc É.~Tardos}, and {\sc V.~Vazirani} (editors),
\newblock \emph{Algorithmic Game Theory},
\newblock Cambridge University Press (2007).

\bibitem[T18]{T18}
\newblock {\sc A.~Tóbiás},
\newblock \emph{Message routeing and percolation in interference limited multihop networks},
\newblock PhD thesis (first version), TU Berlin (2018).

\bibitem[SPW07]{SPW07}
\newblock {\sc H. Song}, {\sc M. Peng}, and {\sc W. Wang},
\newblock Node Selection in Relay-based Cellular Networks,
\newblock {\em IEEE 2007 International
Symposium on Microwave, Antenna, Propagation, and EMC Technologies For Wireless Communications}.

\bibitem[YCG11]{YCG11}
{\sc C.-L. Yao}, {\sc G. Chen}, and {\sc T.-D. Guo},
\newblock Large deviations for the graph distance in supercritical continuum percolation,
\newblock {\em J. Appl. Probab.}, 48(1):154–172 (2011). 


\end{thebibliography}
\end{document}